\newcommand{\field}[1]{\ensuremath{\mathbb{#1}}}
\newcommand{\N}{\field N}
\newcommand{\Z}{\field{Z}}
\newcommand{\R}{\field{R}}
\newcommand{\C}{\field{C}}
\newcommand{\K}{\mathbb{K}}
\newcommand{\Ept}{\field E}
\newcommand{\Prob}{\field P}
\newcommand{\bc}{\bm{c}}
\newcommand{\bn}{\bm{n}}
\newcommand{\bv}{\bm{v}}
\newcommand{\bx}{\bm{x}}
\newcommand{\bI}{\bm{I}}
\newcommand{\bA}{\bm{A}}
\newcommand{\bF}{\bm{F}}
\newcommand{\bpsi}{\bm\psi}
\newcommand{\bvarphi}{\bm\varphi}
\newcommand\dint{\,\mathrm{d}}
\newcommand{\cC}{\mathcal{C}}
\newcommand{\cE}{\mathcal{E}}
\newcommand{\cF}{\mathcal{F}}
\newcommand{\cH}{\mathcal{H}}
\newcommand{\cK}{\mathcal{K}}
\newcommand{\cL}{\mathcal{L}}
\newcommand{\cN}{\mathcal{N}}
\newcommand{\cP}{\mathcal{P}}
\newcommand{\cQ}{\mathcal{Q}}
\newcommand{\Id}{\ensuremath{\mathrm{Id}}}
\newcommand{\kerplus}{\ensuremath{\ker_{\ge0}}}
\newcommand{\rank}{\ensuremath{\mathrm{rank}}}
\newcommand{\supp}{{\rm  supp \, }}
\newcommand{\ii}{\mathrm{i}}
\DeclareMathOperator*{\argmax}{\mathrm{arg\,max}}
\DeclareMathOperator{\effdim}{eff-dim}
\DeclareMathOperator{\Tr}{tr}
\DeclareMathOperator{\conv}{conv}
\DeclareMathOperator{\cone}{cone}
\DeclareMathOperator{\lin}{lin}
\DeclareMathOperator{\intconv}{\textstyle{\int}-conv}
\DeclareMathOperator{\intcone}{\textstyle{\int}-cone}
\DeclareMathOperator{\intlin}{\textstyle{\int}-lin}
\DeclareMathOperator{\sgn}{sgn}
\renewcommand{\Im}{\operatorname{Im}}
\renewcommand{\Re}{\operatorname{Re}}
\theoremstyle{plain}
\newtheorem{theorem}{Theorem}[section]
\newtheorem{lemma}[theorem]{Lemma}
\newtheorem{corollary}[theorem]{Corollary}
\newtheorem{proposition}[theorem]{Proposition}
\theoremstyle{definition}
\newtheorem{definition}[theorem]{Definition}
\newtheorem{remark}[theorem]{Remark}
\newtheorem{example}[theorem]{Example}
\numberwithin{equation}{section}
\title{Beyond Tchakaloff Quadrature: Positive Functionals,\\ Frames and Widths}
\author{Martin Sch\"afer\footnote{Corresponding author. Email: martin.schaefer@mathematik.tu-chemnitz.de}
\and Tino Ullrich}
\date{Chemnitz Technical University, 09111 Chemnitz, Germany \\[2ex]
\today\\[3ex]
\small{Dedicated to the memory of Peter G.~Casazza}}
\begin{document}

\maketitle

\begin{abstract}
\noindent
Tchakaloff's theorem from 1957 asserts
the existence of exact quadrature rules
with non-negative weights for any polynomial space of finite degree on $\R^d$ if the underlying measure is positive, compactly supported, 
and absolutely continuous 
with respect to the Lebesgue measure. This classical result coined the term Tchakaloff quadrature
for quadrature that is exact and only uses non-negative weights. It has been a long-standing endeavor, 
under which conditions
such rules exist.
A final answer was given in 2012 by 
Bisgaard 
with the insight that, in fact, every finite-dimensional space of integrable functions on a positive measure space admits them. 
In this article we recall 
this result 
and provide a major extension to the question of positive discretizability of $\C$-linear functionals on finite-dimensional spaces. We introduce the notion of strict $S$-positivity for such functionals, where $S$ are subsets of the functional's domain, and show the equivalence of positive discretizability to being strictly $S$-positive for a suitable choice of $S$.
We further investigate consequences for other 
discretization problems. 
One fundamental 
implication is
the guaranteed existence of $L_p$-Marcinkiewicz-Zygmund equalities 
in finite-dimensional spaces of $p$-integrable functions in case that $p$ is an even integer,
another 
the exact discretizability of any frame in $\K^n$, where $\K\in\{\R,\C\}$,
if a rescaling of the frame elements is allowed.
In addition, we provide bounds for Tchakaloff quadrature widths $\kappa_n^+$ and, addressing the question of 
constructibility
of discretization points, establish a connection  
to $D$-optimal design.
\end{abstract}

\noindent
{\bf Keywords :} Exact quadrature, Tchakaloff's theorem, Positive functionals, Marcinkiewicz-Zygmund equalities, 
Frame discretization, $D$-optimal design.  \\
{\bf 2010 Mathematics Subject Classification : } 42C15, 42C20, 46B28, 65D30, 65D32, 94A20.


\section{Introduction}

The existence of exact quadrature rules for a given finite-dimensional function space is an important question 
from a theoretical as well as from a practical point of view.  
When weights are allowed in the quadrature and when there is enough freedom in the choice of them,
it turns out to be an easily solvable question, see~Theorems~\ref{thm:quad1} and~\ref{thm:quad2} in this article. It becomes more difficult
if one imposes conditions on the weights and strives, for example, for equally weighted quadrature or rules with non-negative weights. We refer to~\cite{NoWoII,DiKuSl13} for some general background.
Exact quadrature based on non-negative weights is called Tchakaloff quadrature and was first analyzed in the midst of the last century. The research was triggered in 1957 by Tchakaloff's celebrated theorem~\cite{Tschak57} proving the existence of such rules in
polynomial spaces on $\R^d$. Other notable researchers of that early time include Davis~\cite{Davis1967} and Wilson~\cite{Wil1968,Wil1969}.
Since then, it has been a long-time 
endeavor to identify conditions 
on function spaces 
that guarantee the existence of Tchakaloff
quadrature rules.

After many intermediate advances,
requiring for example
compactness assumptions on the domain and 
continuous functions 
or certain moment conditions, see~e.g.~Mysovskikh~\cite{MYSOVSKIKH1975221}, Putinar~\cite{Pu97}, or Curto and Fialkow~\cite{CurFial2002}, 
Bayer and Teichmann~\cite{BayTeich06} in 2006 were the first to provide a general result for polynomial spaces on $\R^d$.  
A complete answer for general function spaces on general domains was given in 2012 by Bisgaard~\cite[Thm.~5.1]{BerSas12}, somewhat hidden in an article authored by Berschneider and Sasv\'{a}ri.
As it turned out, 
restrictions on the function space are not necessary
at all and Tchakaloff rules hold in full generality for positive measures in function spaces of finite dimension.
This -- maybe surprising -- fact may not be so well-known outside the expert community,
as it was e.g.\ not to the authors of this article.
Up to now several newer versions of Tchakaloff results, yet not as general as the one  from~\cite{BerSas12}, have been published,~\cite[Thm.\ 4.1]{DaPrTeTi19} 
and~\cite[Thm.\ 1]{migliorati_stable_2022} 
handling the case of real-valued functions
and~\cite[Thm.~4.3]{BKPSU24} handling complex-valued functions. 

The research in the last mentioned article~\cite{BKPSU24}, which we co-authored, draws a close connection of Tchakaloff quadrature to frame-theoretic questions and $D$-optimal design and was actually the trigger for the investigations leading to this article. In the course of our studies, we came across the already known deeper Tchakaloff results, in particular~\cite[Thm.~5.1]{BerSas12}, which was unknown to us and others, and  the apparent unawareness is the reason why 
we begin with a concise recapitulation. 
While originally formulated in a purely real-valued setting, we extend it in Theorem~\ref{thm:tchak}
to the case of complex-valued functions and provide, in Section~\ref{sec:2}, a short,
well-accessible proof based on elementary convex geometry. 
In addition, we consider the discretization of real and complex measures in Theorems~\ref{thm:quad1} and~\ref{thm:quad2} to work out the common 
structure
behind these different kinds of discretizations.

The central part of this article is Section~3, where we deal with a related, but more general, question, 
namely the search for conditions for positive linear functionals allowing a positive discretization.
Here, there exist counterexamples, see e.g.~Examples~\ref{ex:pos_discr} and~\ref{ex:pos_discr_2}, showing that for the class of positive functionals 
a universal Tchakaloff-type
result like Theorem~\ref{thm:tchak} 
cannot exist.
Until now a precise, universal
characterization of positive discretizability is lacking. 
Our main results of Section~3, Theorems~\ref{thm:pos_functionals} and~\ref{thm:pos_functionals_2}, fill this gap.

The consequences of Theorems~\ref{thm:tchak}, \ref{thm:pos_functionals}, and~\ref{thm:pos_functionals_2}  for several selected 
mathematical subjects are investigated in 
\mbox{Sections~4-6}. 
In Section~4, we provide a connection between the convergence rate of positive quadrature rules on certain compact function classes $\cF\subset L_1(\Omega,\mu;\R)$  and the Kolmogorov widths $d_n(\cF)_{B_\mu(\Omega)}$ and $d_n(\cF)_{L_2(\mu)}$, see~\eqref{eqdef:Kolmo_infty} and~\eqref{eqdef:Kolmo_two} for definitions.  
In Sections~5 and~6, we deal with integral norm discretization
and frame subsampling. The question of how to constructively obtain exact discretization points and weights is the topic of the final Section~7. Here a connection to $D$-optimal design is established, analogous to~\cite[Thm.~3.1]{BKPSU24}, which might be helpful 
for the future development of feasible algorithms.


Let us proceed with a brief overview
on the specific contents of this paper.

\paragraph{Exact quadrature.} 
Our main theorem, Theorem~\ref{thm:tchak}, is an extension of Bisgaard's theorem~\cite[Thm.~5.1]{BerSas12} to 
complex-valued functions. 
For its formulation, we need to introduce some notation. 
Given a subset $M$ of a Hilbert space $\cH$, a positive measure $\mu$ on some domain $\Omega$, and $1\le p<\infty$,  let
\begin{align*}
\cL_p(\Omega,\mu;M) := \Big\{ f:\Omega\to M \text{ $\mu$-measurable} ~:~ \|f\|_{L_p(\mu)} := \Big( \int\limits_\Omega \|f(x)\|_{\cH}^p \dint\mu(x) \Big)^{1/p}<\infty \Big\} \,,
\end{align*}
and $\sim_{\mu}$ be the equivalence relation $f\sim_{\mu} g :\Leftrightarrow \mu( \{ x\in\Omega : f(x)\neq g(x) \} )=0$ on $\mu$-measurable functions. The $p$-Lebesgue space $L_p(\Omega,\mu;M)$ is then defined as, see e.g.~\cite[1.13]{Alt_12},
\begin{align}\label{eqdef:Lp_space}
L_p(\Omega,\mu;M) := \cL_p(\Omega,\mu;M) / \sim_{\mu} \,.
\end{align}
Note that this definition includes the case $M=\cH$ and, in particular, the standard 
case $M=\K$. Further note that, 
if $\cH$ is finite-dimensional, membership in $L_p(\Omega,\mu;\cH)$ can also be described differently. A function $f:\Omega\to\cH$ is in $L_p(\Omega,\mu;\cH)$ if and only if $x\mapsto\langle f(x),\bvarphi\rangle_{\cH}$ is in $L_p(\Omega,\mu;\K)$ for every $\bvarphi\in\cH$. 

We can now state Theorem~\ref{thm:tchak} providing Tchakaloff discretization of $\mu$ in any finite-dimensional subspace of $L_1(\Omega,\mu;\K)$, where both $\K=\R$ and $\K=\C$ are allowed.

\begin{theorem}\label{thm:tchak}
Let $(\Omega,\Sigma,\mu)$ be a positive measure space, whereby $\Omega$ may be any generic set, and $\K\in\{\R,\C\}$. For each given $n$-dimensional subspace $V_n$ of $L_1(\Omega,\mu;\K)$ with effective real dimension $N$, see Definition~\ref{def:eff_dim}, there exist 
points $x_1,\ldots,x_N\in\Omega$ and associated non-negative weights $\mu_1,\ldots,\mu_N\ge 0$ such that
\begin{align*}
\int_{\Omega} f(x) \dint\mu(x) = \sum_{j=1}^{N} \mu_j f(x_j) \quad\text{ for all $f\in V_n$}\,.
\end{align*}
\end{theorem}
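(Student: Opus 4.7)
The plan is to recast the claim as a finite-dimensional convex-geometry statement. First, I would choose a real basis $\psi_1,\dots,\psi_N\in L_1(\Omega,\mu;\R)$ of the real-linear span of all real and imaginary parts of functions in $V_n$; by Definition~\ref{def:eff_dim} this span has real dimension exactly $N$. Every $f\in V_n$ is then of the form $\sum_k \alpha_k\psi_k$ with $\alpha_k\in\K$, so a common quadrature formula $\int \psi_k\,d\mu = \sum_j \mu_j \psi_k(x_j)$ for $k=1,\dots,N$ extends by $\K$-linearity to all of $V_n$. Setting $\Phi:=(\psi_1,\dots,\psi_N)\colon\Omega\to\R^N$ and $\bm{c}:=\int_\Omega \Phi\,d\mu\in\R^N$, the task reduces to showing $\bm{c}\in\cone(\Phi(\Omega))$; Carath\'eodory's theorem for conic hulls in $\R^N$ will then produce at most $N$ points $x_j\in\Omega$ and weights $\mu_j\ge 0$ with $\bm{c}=\sum_j\mu_j\Phi(x_j)$ (padding with zero weights to reach exactly $N$).

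For the easier half I would first establish $\bm{c}\in\overline{\cone(\Phi(\Omega))}$ by a bipolar argument. Any $\bm{a}\in\R^N$ satisfying $\langle\bm{a},\Phi(x)\rangle\ge 0$ for every $x\in\Omega$ induces a pointwise non-negative function $f_{\bm{a}}:=\sum_k a_k\psi_k\in L_1(\mu)$, and positivity of $\mu$ immediately yields $\langle\bm{a},\bm{c}\rangle=\int f_{\bm{a}}\,d\mu\ge 0$. Hence $\bm{c}$ lies in the double dual of $\Phi(\Omega)$, i.e.\ in the closed conic hull.

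The main obstacle is promoting membership in the closed conic hull to membership in $\cone(\Phi(\Omega))$ itself, since conic hulls of arbitrary subsets of $\R^N$ need not be closed. I would handle this by induction on $N$. The base case $N=1$ is elementary: when $c\ne 0$, the set $\{x\in\Omega:c\cdot\Phi(x)>0\}$ must carry positive $\mu$-mass and any of its points together with $\mu_1=c/\Phi(x_1)$ works. For the inductive step, split into two subcases. If $\bm{c}$ lies in the relative interior of $\overline{\cone(\Phi(\Omega))}$, then $\bm{c}\in\cone(\Phi(\Omega))$ automatically, since the relative interior of a convex set coincides with that of its closure. Otherwise $\bm{c}$ lies on the relative boundary; a supporting hyperplane furnishes $\bm{a}\ne 0$ with $\langle\bm{a},\Phi(x)\rangle\ge 0$ on all of $\Omega$ and $\int\langle\bm{a},\Phi\rangle\,d\mu=\langle\bm{a},\bm{c}\rangle=0$. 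A non-negative integrand of zero integral vanishes $\mu$-almost everywhere, so $E:=\{x\in\Omega:\langle\bm{a},\Phi(x)\rangle=0\}$ has full $\mu$-measure and $\Phi|_E$ takes values in the $(N-1)$-dimensional subspace $\bm{a}^\perp$. Applying the inductive hypothesis to $\Phi|_E$ on $(E,\Sigma|_E,\mu|_E)$, viewed inside this smaller ambient space, closes the recursion.

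Translating back, writing any $f\in V_n$ as $\sum_k \alpha_k\psi_k$ with $\alpha_k\in\K$ yields $\int f\,d\mu = \sum_k\alpha_k c_k = \sum_k\alpha_k\sum_j \mu_j\psi_k(x_j) = \sum_j\mu_j f(x_j)$, which is the desired quadrature identity. The substantive difficulty is concentrated in the induction step: the key insight is that a non-trivial boundary hyperplane forces a pointwise non-negative function in the span to vanish $\mu$-a.e., which collapses the effective ambient dimension by exactly one and makes the recursion run.
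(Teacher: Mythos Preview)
Your proposal is correct and follows essentially the same route as the paper's main proof in Section~\ref{ssec:Tchakaloff quadrature}: reduce to showing that the vector of integrals lies in the conic hull of the range of the sample map, then argue by induction on the ambient real dimension using a supporting hyperplane and the observation that a non-negative integrand with vanishing integral must be zero $\mu$-a.e. The only organizational difference is that the paper first abstracts this to the identity $\intcone M=\cone M$ (Lemma~\ref{lem:intcone_eq_cone}) and then invokes Carath\'eodory--Tchakaloff subsampling (Proposition~\ref{prop:ContCarSub}), whereas you run the induction directly on the quadrature problem; your bipolar argument for membership in the closed cone is also slightly more explicit than the paper's corresponding step.
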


Since elements of $L_p(\Omega,\mu;\K)$ are often not proper functions but merely $\mu$-equivalence classes of functions, sampling may not be a feasible operation in $L_p(\Omega,\mu;\K)$ making quadrature rules unconceivable. The following remark comments on this situation.

\begin{remark}
The space $V_n$ in Theorem~\ref{thm:tchak} is assumed to consist of proper functions from $\Omega$ to $\K$, whose function values $f(x)$ are unambiguously defined for each $x\in\Omega$. If $V_n$ is a-priori given as a space of $\mu$-equivalence classes, one needs to choose fixed representatives $\varphi_1,\ldots,\varphi_n$ of a basis of $V_n$ and then consider $V_n$ as the proper function space $\lin_{\K}\{ \varphi_j : j=1,\ldots,n \}$, i.e., as the $\K$-linear hull of the representing basis functions, see Definition~\ref{def:lin_hull}.
\end{remark}

\begin{definition}\label{def:lin_hull}
The \emph{$\K$-linear hull} of a subset $M$ of a $\K$-linear space, where $\K\in\{\R,\C\}$, is defined by
\begin{align*}
\lin_{\K} M := \Big\{ \sum_{j=1}^{N} \lambda_j \bv_j : \bv_j\in M,\, \lambda_j\in\K,\, N\in\N \Big\} \,. 
\end{align*}
It is also common to refer to this hull as \emph{$\K$-span}.
\end{definition}

To make the picture 
complete, we additionally recall in Section~\ref{sec:2} analogous integral discretizations on real and complex measure spaces. These are given in Theorems~\ref{thm:quad1}
and~\ref{thm:quad2} and are standard knowledge, yet 
they nicely illustrate the structural similarities to Theorem~\ref{thm:tchak}. 

\paragraph{Discretization of positive linear functionals.}

The major contribution of this article is a precise characterization
of positively discretizable functionals $L:V_n\to\K$, where $V_n$ is an arbitrary $n$-dimensional linear function space on a generic domain $\Omega$.
These characterizations are given in Theorems~\ref{thm:pos_functionals} and~\ref{thm:pos_functionals_2} of Section~3. 
In Theorem~\ref{thm:pos_functionals}, we prove for real-valued $V_n$ and $\R$-linear $L:V_n\to\R$ that positive discretizability
is equivalent to $L$ being \emph{strictly $S$-positive} for some $S\subseteq\Omega$. 
Strict $S$-positivity of $L$ hereby means $S$-positivity, see Definition~\ref{def:D-positive}, and additionally
\begin{align}\label{def:strict_D-positive}
 f|_S\ge0 \wedge f|_S\neq 0 \enspace\Rightarrow\enspace Lf > 0 \,,
\end{align}
where $f|_S$ denotes the restriction of $f\in V_n$ to $S$. 
An extension of this result to complex-valued $V_n$ and $\C$-linear functionals is provided
by Theorem~\ref{thm:pos_functionals_2}.
As direct corollaries, we obtain from Theorem~\ref{thm:pos_functionals} the real version of the general Tchakaloff theorem, i.e., \cite[Thm.~5.1]{BerSas12} due to Bisgaard, and from Theorem~\ref{thm:pos_functionals_2} its complex-valued extension, Theorem~\ref{thm:tchak}.

\paragraph{Tchakaloff quadrature and Kolmogorov widths.}

A nice direct application of Theorem~\ref{thm:tchak} is presented in Section~\ref{sec:4}.
Here we deduce upper bounds for the rate of convergence of positive quadrature formulas to the exact integral for functions $f\in\cF$ from certain classes
$\cF\subseteq L_1(\Omega,\mu;\R)$. 
As a suitable metric for the approximability of integrals we use the following quadrature widths, cf.~\cite[8.1.2]{DuTeUl18}, 
\begin{align}\label{eqdef:kappa_numbers}
\kappa^{+}_n(\cF;\mu) :=  \inf\limits_{\substack{x_1,\ldots,x_n\in\Omega \\ \mu_1,\ldots,\mu_n\ge 0 }} \:
\sup\limits_{f \in \cF} \:
\Big| \int\limits_{\Omega} f(x)\dint\mu(x) - \sum\limits_{j=1}^{n} \mu_j f(x_j) \Big| \,,
\end{align}
subsequently referred to as \emph{Tchakaloff quadrature widths}, where $n\in\N$ and the infimum is taken over all quadrature rules with $n$ nodes and non-negative weights.

In Theorem~\ref{prop:Kolmogorov}, we prove a relation which complements a classical result~\cite[Prop.~2]{Nov86} by Novak, namely
\begin{align*}
\kappa^{+}_{n+1}(\cF;\mu)  \le 2\mu(\Omega) d_{n}(\cF)_{B_\mu(\Omega)} \quad\text{ for all $n\in\N_{0}$}\,, 
\end{align*}
where $d_n(\cF)_{B_\mu(\Omega)}$ are certain Kolmogorov widths of $\cF$ measured in the uniform norm.  
For the definition of $d_n(\cF)_{B_\mu(\Omega)}$, 
see~\eqref{eqdef:Kolmo_infty}.
In addition, we obtain a result for specific subclasses $\cF\subseteq L_2(\Omega,\mu;\R)$ that appear naturally as the unit balls of separable Hilbert spaces $\cH$ of functions 
on $\Omega$ compactly embedded via
\begin{equation*}
\Id: \cH \rightarrow L_2(\Omega,\mu;\R) \,.
\end{equation*}
Denoting the 
singular numbers of $\Id$ by $\sigma_1 \geq \sigma_2 \geq \sigma_3 \geq \cdots \ge 0$,
Theorem~\ref{prop:Kolmogorov_2} yields the estimate
\begin{align*}
\kappa^{+}_{n}(\cF;\mu) 
\leq 2\sqrt{\mu(\Omega)} \Big( \sum\limits_{j=n}^{\infty}\sigma_j^2 \Big)^{1/2} \quad\text{ for all $n\in\N$} \,.
\end{align*}
Here the singular numbers correspond to the Kolmogorov widths $d_n(\cF)_{L_2(\mu)}$, see~\eqref{eqdef:Kolmo_two} for the definition.

\paragraph{Integral norm discretization.} 

In Section~\ref{sec:3}, following the exposition in~\cite[Sec.~4]{DaPrTeTi19}, Theorem~\ref{thm:tchak} is used for 
the discretization of $L_p$-norms $\|f\|_{L_p(\mu)}$, see~\eqref{eqdef:Lp_space}, for functions $f\in L_p(\Omega,\mu;\K)$, where $\mu$ is a positive measure and $1\le p<\infty$. 
The aim are so-called $L_p$-Marcinkiewicz-Zygmund (MZ) equalities, often referred to as exact Marcinkiewicz-Zygmund (MZ) inequalities, which have the form
\begin{equation}\label{eqdef:LpMZequalities}
        \int_\Omega |f(x)|^p \dint\mu(x) = \sum_{j=1}^{N} \mu_j |f(x_j)|^p 
\end{equation}
for a finite point set $x_1,\ldots,x_N\in\Omega$ and associated non-negative weights $\mu_1,\ldots,\mu_N$.
A broader survey on MZ (in)equalities is given in~\cite{Tem2018MZ,DaPrTeTi19,KaKoLimTem_22}. Originally introduced by Marcinkiewicz and Zygmund in 1935 for trigonometric spaces, see~\cite{Zyg59},  
they have developed into a wide research area. For some recent contributions see e.g.~\cite{Tem2021HyperbolicCross,LimTem22,DaTe22,BSU23,FiHiJaUl24}, or for $L_2$-equalities with negative weights~\cite{Li21,Li21erratum}. Connections to sampling reconstruction and quadrature are analyzed in~\cite{Gr20,BKPSU24}. For negative results we refer to~\cite{FREEMAN2023126846}.

A fundamental question is, for which subspaces of $L_p(\Omega,\mu;\K)$ equalities of the form~\eqref{eqdef:LpMZequalities} exist.
It is clear, due to the restriction to finite sums on the right-hand side,
that potential subspaces need to be finite-dimensional. 
And it is known, see e.g.~\cite[Prop.\ 3.3]{DaPrTeTi19}, that $L_p$-MZ equalities may fail to exist if $1\le p<\infty$ is not an even integer.
For even $p\in2\N$, on the other hand, the existence has been proven under fairly general assumptions, see e.g.~\cite[Thm~3.2]{DaPrTeTi19}.
Theorem~\ref{thm:tchak} now allows us to formulate a general positive result, valid for any finite-dimensional subspace of $L_p(\Omega,\mu;\K)$ if $p$ is even, see Theorems~\ref{thm:MZ_product_of_functions} and~\ref{even_MZ}.

\paragraph{Frame discretization.}

Section~\ref{sec:5} starts with a result on the orthonormalization of a given set of functions $\varphi_1,\dots,\varphi_n\colon \Omega\to\K$.
This result, Proposition~\ref{prop:discr_orth}, is in the spirit of~\cite[Thm.~3.1]{BKPSU24} and also examines the discretizability 
of orthonormality relations. 
It is the basis for our subsequent analysis of the discretizability of continuous frames, a problem deeply studied in literature, see e.g.~\cite{FS19,FREEMAN2023126846} and the references therein.  
One hereby differentiates between weighted and unweighted discretization. While unweighted discretization refers to a plain subsampling of the frame and is investigated e.g.\ in the seminal article~\cite{FS19} by Freeman and Speegle, we focus in our analysis on weighted discretization, which includes an additional rescaling step that significantly simplifies the problem.

Our main results
are Theorems~\ref{thm:discr_orth} and~\ref{thm:discr_orth_2}, 
giving precise criteria for vector families $\{\bvarphi_x\}_{x\in\Omega}$, when those can be turned into continuous frames for $\K^n$ with prescribed frame operators. This is achieved 
through the choice of appropriate measures $\mu$ on $\Omega$.
In particular, see Corollary~\ref{cor:discr_scalability}, these theorems 
answer the question of frame scalability, a topic handled e.g.\ in~\cite{KUTYNIOK20132225,Cahill_Chen_2013,Kutyniok_Okoudjou_Philipp,Casazza_Chen_2017,CasaCarliTran23}.
Further, see Corollaries~\ref{cor:discr_orth} and~\ref{cor:discr_subframe}, they assure the existence of exact weighted discretizations for any finite-dimensional Hilbert frame $\bvarphi:\Omega\to\cH$, see Definition~\ref{def:mu-frame}. 
More concretely, there always exist a number $N\in\N$, points $x_1,\ldots,x_N\in\Omega$, and scaling weights $s_1,\ldots,s_N>0$ such that the discrete frame
\begin{align*}
\big\{s_j\bvarphi(x_j)\big\}_{j=1,\ldots,N} 
\end{align*}
has the same frame operator as $\bvarphi$.

\paragraph{$D$-optimal design.}

$D$-optimal designs are specific experimental designs, see e.g.~\cite{DeSt97}, where the determinant of the information matrix is maximal. They originate from statistics but
they have also found applications in the construction of good point sets for quadrature and interpolation.   
As an example, let us mention Fekete points, where one maximizes the determinant of Vandermonde-type matrices~\cite{Bo23}. Their close relationship to approximation-theoretic tasks is further highlighted in~\cite{BoPiVi19,BoPiVi20}.
Our investigations in Section~\ref{sec:$D$-opt} show that also the task of finding suitable point-weight sets for the exact discretization of frames can be formulated as a $D$-optimal design problem. In Theorem~\ref{$D$-opt}, our main result and a generalization of~\cite[Thm.~3.1]{BKPSU24}, this task boils down to a determinant maximization problem 
on a set of Gramian matrices.
Although this maximization is usually a difficult, non-convex optimization problem, it  
might be a helpful approach for the development of feasible algorithms.


\paragraph{Basic notation.} 

As usual, $\N$, $\Z$,  $\R$, $\C$ denote the natural, integer, real, and complex numbers. The symbol $\K$ always stands for either $\R$ or $\C$. In addition, we introduce $\N_{0}:=\N\cup\{0\}$ and $M_{\ge t}:=\{ x\in M:x\ge t\}$ for $M\subseteq\R$ and $t\in\R$. Analogously, $M_{>t}$ is defined, where the order relation is strict.
$\K^n$ shall denote the $n$-space of column vectors over $\K$ and $\K^{m\times n}$ the corresponding set of $m\times n$-matrices.
We write $\Re(x)$ and $\Im(x)$ for the real and imaginary part of $x\in\K$. Further, we use $|x|$ for the absolute value and $\overline{x}$ for conjugation. 
For an element $\bx\in\K^n$, the vector  $\overline{\bx}$ is obtained from $\bx$ by conjugation in each component. Further, $\bx^\top$ is the associated row vector and $\bx^\ast$ stands for $\overline{\bx}^\top$. Corresponding notation is used for the matrix spaces $\K^{m\times n}$. Norms are denoted by $\|\cdot\|$ and scalar products by $\langle\cdot,\cdot\rangle$, with additional subscripts if necessary.
For a subset $M\subseteq\K^n$, we let $\overline{M}$ denote its closure, $\operatorname{int}M$ its interior, and $\operatorname{\partial}M$ its boundary with respect to the usual topology on $\K^n$.
The collection of all functions $f:\Omega\to\K$ is denoted by $\K^\Omega$. If $\Omega$ is finite, it is without further notification identified with $\K^{|\Omega|}$, where $|\Omega|$ is the cardinality of $\Omega$. The restriction of $f:\Omega\to\K$ to a subset $S\subseteq\Omega$ is written as $f|_S$. Positivity of $f\in\R^\Omega$, meaning $f(x) \ge0$ for every $x\in\Omega$, is written as $f\ge0$. Finally, writing $f>0$ indicates a strictly positive function $f$ with $f(x)>0$ for every $x\in\Omega$. 


\section{Exact quadrature}
\label{sec:2}

We are first interested in exact quadrature on $n$-dimensional subspaces $V_n$ of  $L_1(\Omega,\mu;\K)$. In Subsection~\ref{ssec:Tchakaloff quadrature} we will prove an extended version of Bisgaard's theorem on Tchakaloff quadrature, the main result of this section, which includes the case of complex-valued functions.

\subsection{Exact quadrature with general weights}

As a starting point, let us recall how a simple dimension argument yields exact quadrature formulas for $V_n\subseteq L_1(\Omega,\mu;\K)$ with $\dim_{\K}V_n=n$ nodes if the weights may be chosen freely from $\K$.
Hereby we allow $\K$-valued measures $\mu$, subsequently called $\K$-measures. 

\begin{definition}
Let $(\Omega,\Sigma)$ be a measurable space and $\K\in\{\R,\C\}$. A function $\mu:\Sigma\to (-\infty,\infty]\times\ii (-\infty,\infty]$ is called a \emph{$\K$-measure} on $(\Omega,\Sigma)$ if it is
$\sigma$-additive, i.e., if it satisfies
\begin{align}\label{eq:sigma-additivity}
\mu\Big( \bigcup_{j=1}^{\infty} S_j \Big) = \sum_{j=1}^{\infty} \mu(S_j)
\end{align}
for any sequence $(S_j)_{j\in\N}$ of disjoint sets belonging to $\Sigma$.
\end{definition}

In case $\K=\C$, the $\K$-measures are usually referred to as \emph{complex measures} in case they are finite, see e.g.~\cite{Rudin_1966,Elstrodt_2018}. We allow them here to take infinite values. In case $\K=\R$, they are called \emph{signed measures}, see e.g.~\cite{Saks_1937,Rudin_1966,Elstrodt_2018}. For distinction, measures in the classical sense with values in $[0,\infty]$ will subsequently be referred to as \emph{positive measures}. A $\K$-measure $\mu$ on $\Omega$ is considered \emph{discrete} if there is an at most countable set $S\subseteq\Omega$, of which each singleton is $\mu$-measurable, such that each $\mu$-measurable subset of $\Omega\setminus S$ has measure $0$. 
We refer to the above mentioned literature for more details.

\begin{remark}
As $\bigcup_{j=1}^{\infty} S_j =  \bigcup_{j=1}^{\infty} S_{\sigma(j)}$ for any permutation $\sigma:\N\to\N$, it follows that $\sum _{j=1}^{\infty}\mu (S_{j})$ in~\eqref{eq:sigma-additivity} converges unconditionally for any $\K$-measure $\mu$. 
\end{remark}

The following fundamental theorem is standard  knowledge
and straight-forward to prove. Hereby we let, for a $\K$-measure $\mu$,
\[
L_1(\Omega,\mu;\K) := L_1(\Omega,|\mu|;\K) \,,
\]
where $|\mu|$ is the total variation of $\mu$.

\begin{theorem}\label{thm:quad1}
Let $(\Omega,\Sigma,\mu)$ be a $\K$-measure space, whereby $\Omega$ may be any generic set, and $\K\in\{\R,\C\}$. For each given $n$-dimensional subspace $V_n\subseteq L_1(\Omega,\mu;\K)$, there exists a set of $n$ points $x_1,\ldots,x_n\in\Omega$ and associated numbers $\mu_1,\ldots,\mu_n\in\K$ such that
\begin{align*}
\int_{\Omega} f(x) \dint\mu(x) = \sum_{j=1}^{n} \mu_j f(x_j) \quad\text{ for all $f\in V_n$}\,.
\end{align*}
\end{theorem}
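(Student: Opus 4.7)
The plan is to prove Theorem~\ref{thm:quad1} via a straightforward dimension argument: the integration functional on $V_n$ is one of many $\K$-linear functionals, and the space of all such functionals is $n$-dimensional, so it is natural to try to hit it with $n$ point evaluations.

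First I would fix a basis $\varphi_1,\ldots,\varphi_n$ of $V_n$ (choosing concrete representatives of the equivalence classes as in the remark preceding Definition~\ref{def:lin_hull}, so that the pointwise values $\varphi_i(x)$ are unambiguous for every $x\in\Omega$). Then I would consider the evaluation map
\begin{align*}
\bv:\Omega\to\K^n,\qquad \bv(x):=(\varphi_1(x),\ldots,\varphi_n(x))^\top,
\end{align*}
and form the $\K$-linear hull $W:=\lin_\K\{\bv(x):x\in\Omega\}\subseteq\K^n$. The first key step is to verify that $W=\K^n$. Indeed, if $W$ were a proper subspace, there would exist $0\neq \bc=(c_1,\ldots,c_n)\in\K^n$ with $\langle\bc,\bv(x)\rangle_{\K^n}=0$ for every $x\in\Omega$, i.e.\ $\sum_{i=1}^n c_i\varphi_i(x)=0$ pointwise, contradicting the linear independence of $\varphi_1,\ldots,\varphi_n$ in $V_n\subseteq L_1(\Omega,\mu;\K)$.

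Since $W=\K^n$, I can extract $x_1,\ldots,x_n\in\Omega$ such that $\bv(x_1),\ldots,\bv(x_n)$ form a basis of $\K^n$; equivalently, the matrix $\bM:=(\varphi_i(x_j))_{i,j=1}^n\in\K^{n\times n}$ is invertible. The second key step is to solve for the weights: define $\bb\in\K^n$ by $b_i:=\int_\Omega \varphi_i(x)\dint\mu(x)$, which is well-defined because $\varphi_i\in L_1(\Omega,\mu;\K)$, and let $(\mu_1,\ldots,\mu_n)^\top:=\bM^{-1}\bb$. By construction,
\begin{align*}
\int_\Omega \varphi_i(x)\dint\mu(x) = \sum_{j=1}^n \mu_j\,\varphi_i(x_j)\quad\text{for each }i=1,\ldots,n,
\end{align*}
and, by $\K$-linearity of both sides in $f$, the identity extends from the basis to every $f\in V_n$.

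There is really no hard part here; the only point that requires a moment of care is ensuring that $V_n$ is treated as a space of proper functions (to make pointwise evaluation meaningful) and that the linear independence argument is carried out in $V_n$ rather than in the ambient $L_1$-quotient. Once that is set, everything reduces to elementary linear algebra. I would close by remarking that no positivity or finiteness of $\mu$ is used — only that the basis functions are $\mu$-integrable so that $\bb$ is well-defined — which explains why the same statement holds uniformly for real, signed, and complex measures and sets the stage for contrasting it with the more subtle Theorem~\ref{thm:tchak}, where non-negativity of the weights forces the additional effective-dimension bound.
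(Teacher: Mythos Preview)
Your proof is correct and follows essentially the same approach as the paper's: choose representatives of a basis, show that the evaluation vectors $\bvarphi(x)$ span $\K^n$ so that $n$ points yield an invertible system matrix, and solve the resulting linear system for the weights. The only difference is organizational: the paper isolates the spanning step as a separate lemma (Lemma~\ref{lem:column_rank}, the row-rank $=$ column-rank statement for functions on an arbitrary index set), whereas you give the orthogonal-complement argument inline; your version is the special case $N=n$ of that lemma.
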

\begin{proof}
First choose a basis of $n$ linearly independent  functions in $V_n$ and then corresponding representatives $\varphi_1,\ldots,\varphi_n:\Omega\to\K$, which are defined point-wise on $\Omega$ and are also linearly independent. Then apply Lemma~\ref{lem:column_rank}
to obtain nodes $x_1,\ldots,x_n\in\Omega$ such that the system matrix in~\eqref{eq:equation} is regular. There are then unique weights $\mu_1,\ldots,\mu_n\in\K$ which solve the equation 
\begin{equation}\label{eq:equation}
        \begin{pmatrix}
            \varphi_1(x_1) & \dots & \varphi_1(x_{n}) \\
            \vdots & \ddots & \vdots \\
            \varphi_n(x_1) & \dots & \varphi_n(x_{n}) 
        \end{pmatrix}
        \cdot \begin{pmatrix}
            \mu_1 \\
            \vdots \\
            \mu_n 
        \end{pmatrix}   =
        \begin{pmatrix}
            \int_\Omega \varphi_1 \dint\mu \\
            \vdots \\
            \int_\Omega \varphi_n \dint\mu 
        \end{pmatrix}  \,.
    \end{equation}
The quadrature formula is now a consequence of the linearity of the integration operation.
\end{proof}

The lemma below is the key ingredient in the proof of Theorem~\ref{thm:quad1}, where it is applied with $N=n$. Its statement is a generalization of the coincidence of row-rank and column-rank of rectangular matrices. Here the rows are extended to functions on some arbitrary index set $\Omega$.  

\begin{lemma}\label{lem:column_rank}
Let $\varphi_1,\ldots,\varphi_n:\Omega\to\K$ be functions and let
\begin{align*}
N := \dim_{\K} \lin_{\K} \{ \varphi_j : j=1,\ldots,n \} \le n\,.
\end{align*}
There exist $x_1,\ldots,x_N\in \Omega$ such that the matrix
\begin{align*}
\begin{pmatrix}
            \varphi_1(x_1) & \dots & \varphi_1(x_N) \\
            \vdots & \ddots & \vdots \\
            \varphi_n(x_1) & \dots & \varphi_n(x_N) 
        \end{pmatrix}  
\end{align*}
has full column-rank, i.e.\ linearly independent column vectors. Furthermore, $N$ is the maximal number of linearly independent vectors in the set $\{ \bvarphi(x):x\in\Omega \}$, where $\bvarphi(x):=(\varphi_1(x),\ldots,\varphi_n(x))^\top\in\K^{n}$. In other words,
\begin{align}\label{eq:dim_equal}
\dim_{\K} \lin_{\K} \{ \varphi_j : j=1,\ldots,n \}  =\dim_{\K} \lin_{\K} \{ \bvarphi(x):x\in\Omega \} \,.
\end{align}
\end{lemma}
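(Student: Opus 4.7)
My plan is to identify the column vectors of the displayed matrix as $\bvarphi(x_1),\ldots,\bvarphi(x_N)\in\K^n$, so that finding evaluation points where the matrix has full column rank is the same thing as exhibiting $N$ linearly independent vectors in $\{\bvarphi(x):x\in\Omega\}$. Both the matrix assertion and the dimension identity~\eqref{eq:dim_equal} will then follow once I establish that $\dim_\K\lin_\K\{\bvarphi(x):x\in\Omega\}=N$.

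For the upper bound $\le N$, I would relabel so that $\varphi_1,\ldots,\varphi_N$ form a basis of $\lin_\K\{\varphi_j:j=1,\ldots,n\}$ and write the remaining functions as $\varphi_i=\sum_{j=1}^{N}a_{ij}\varphi_j$ for $i>N$. Evaluated pointwise, this shows that every $\bvarphi(x)$ lies inside the fixed $N$-dimensional subspace of $\K^n$ cut out by the coordinate relations $v_i=\sum_{j=1}^{N}a_{ij}v_j$ for $i>N$, so $\dim_\K\lin_\K\{\bvarphi(x):x\in\Omega\}\le N$.

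For the lower bound $\ge N$, I plan to construct $x_1,\ldots,x_N\in\Omega$ inductively so that the leading $N\times N$ minor $\det(\varphi_i(x_j))_{i,j=1}^{N}$ is nonzero; this immediately forces the corresponding columns $\bvarphi(x_j)$ to be linearly independent in $\K^n$. Starting from any $x_1$ with $\varphi_1(x_1)\ne0$, assume I have $x_1,\ldots,x_k$ with $k<N$ making $\det(\varphi_i(x_j))_{i,j=1}^{k}\ne0$. Cofactor-expanding the candidate $(k+1)\times(k+1)$ determinant along its last column expresses it as a linear combination $\sum_{i=1}^{k+1}c_i\,\varphi_i(x_{k+1})$ in which the coefficient $c_{k+1}$ is exactly the nonzero $k\times k$ minor from the inductive hypothesis. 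Since $\varphi_1,\ldots,\varphi_{k+1}$ are linearly independent as functions on $\Omega$, this nontrivial combination cannot vanish identically, so an admissible $x_{k+1}$ exists.

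The main technical point is the cofactor-expansion argument in the inductive step, but it is mild: it reduces each step to the standard fact that a nontrivial linear combination of linearly independent functions is not identically zero. After completing the induction, the $n\times N$ matrix has full column rank, which gives the first assertion, and $\{\bvarphi(x):x\in\Omega\}$ contains $N$ linearly independent vectors, which together with the upper bound yields the equality~\eqref{eq:dim_equal} and the claim that $N$ is maximal.
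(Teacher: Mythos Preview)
Your proof is correct but proceeds differently from the paper's. Both arguments handle the upper bound $\dim_{\K}\lin_{\K}\{\bvarphi(x):x\in\Omega\}\le N$ in essentially the same way (linear relations among the $\varphi_i$ force the vectors $\bvarphi(x)$ into a fixed $N$-dimensional subspace; the paper phrases this via the row-rank of the evaluation matrix). The lower bound is where the approaches diverge. The paper argues non-constructively: it takes $k$ to be the \emph{maximal} number of linearly independent columns, observes that then every $\bvarphi(x)$ lies in their span, and derives a contradiction from $k<N$ by producing a coefficient vector that annihilates all $k$ columns yet should not give the zero function---but then must, since the $k$ columns generate everything. Your argument is instead an explicit inductive construction: you build $x_1,\ldots,x_N$ one at a time so that the leading principal minors stay nonzero, using cofactor expansion to reduce each step to the nonvanishing of a nontrivial combination of linearly independent functions. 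Your route is slightly more hands-on and has the advantage of being constructive (it actually exhibits a procedure for picking the points), while the paper's is a shade more conceptual and avoids determinants altogether. Both are short and standard; neither dominates the other.
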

\begin{proof}
Assume that we have $k$ nodes $x_1,\ldots,x_k$
such that the vectors ${\bvarphi}(x_1),\ldots,{\bvarphi}(x_k)$ are linearly independent, whereby $k$ shall be the maximal possible number of linearly independent columns. This ensures that ${\bvarphi}(x) \in \lin_{\K}\{ {\bvarphi}(x_j) : j=1,\ldots,k \}$ for every $x\in\Omega$. As a consequence, there exist $\lambda_1,\ldots,\lambda_k\in\K$ for every $x\in\Omega$ with 
\begin{align}\label{aux_label}
{\bvarphi}(x)= \sum_{j=1}^{k} \lambda_j  \bvarphi(x_j) \,. 
\end{align}
The column-rank of the matrix
\begin{align*}
\begin{pmatrix}
            \varphi_1(x_1) & \dots & \varphi_1(x_k) \\
            \vdots & \ddots & \vdots \\
            \varphi_n(x_1) & \dots & \varphi_n(x_k) 
        \end{pmatrix} 
\end{align*}
is $k$ and the row-rank $r$ satisfies $r\le N$, which follows right from the definition of $N$.
Since the row-rank and the column-rank of a matrix always coincide, we thus get $k\le N$.
To see that $k<N$ is not possible, define
\begin{align*}
D := \Big\{ (c_1,\ldots,c_n)^\top \in \K^n: \sum_{j=1}^{n} c_j \varphi_j = 0  \Big\} \,.
\end{align*}
Then, 
if $k<N$, there exists a coefficient vector $(c_1,\ldots,c_n)^\top\in\K^{n}\setminus D$ such that 
\begin{align*}
\sum_{i=1}^{n} c_i\varphi_i(x_j) = 0 \quad\text{ for all $j=1,\ldots,k$}\,.
\end{align*}
But, in view of~\eqref{aux_label}, we can deduce for any $x\in\Omega$  
\begin{align*}
\sum_{i=1}^{n} c_i\varphi_i(x) = 
\sum_{i=1}^{n} c_i \sum_{j=1}^{k} \lambda_j  \varphi_i(x_j) = \sum_{j=1}^{k} \lambda_j \sum_{i=1}^{n} c_i\varphi_i(x_j) = 0 \,,
\end{align*}
which implies $(c_1,\ldots,c_n)^\top\in D$, a contradiction. The proof is finished.
\end{proof}

Let us also record the subsequent corollary of
Lemma~\ref{lem:column_rank}.

\begin{corollary}\label{cor:column_rank}
Let $\varphi_1,\ldots,\varphi_n:\Omega\to\K$ be functions and $\bvarphi(x):=(\varphi_1(x),\ldots,\varphi_n(x))^\top\in\K^n$. Further, let $x_1,\ldots,x_N$ be the points from Lemma~\ref{lem:column_rank}. 
Then
\[
\lin_{\K}\{ \bvarphi(x):x\in\Omega \} = \lin_{\K}\{ \bvarphi(x_j):j=1,\ldots,N\}  \,.
\]
\end{corollary}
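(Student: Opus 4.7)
The corollary is essentially a bookkeeping consequence of what has already been established in Lemma~\ref{lem:column_rank}, so I expect the proof to be only a few lines. The plan is to observe that both sides of the claimed equality are $\K$-linear subspaces of $\K^n$ of the same finite dimension $N$, together with one trivial inclusion, and conclude equality.

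First, I would note the straightforward containment
\[
\lin_{\K}\{\bvarphi(x_j):j=1,\ldots,N\} \;\subseteq\; \lin_{\K}\{\bvarphi(x):x\in\Omega\} \,,
\]
which holds because each $x_j$ lies in $\Omega$. Next, I would invoke the conclusion of Lemma~\ref{lem:column_rank} that the matrix with columns $\bvarphi(x_1),\ldots,\bvarphi(x_N)$ has full column-rank; this means these $N$ vectors are $\K$-linearly independent, so the left-hand side of the claimed identity is a subspace of $\K^n$ of dimension exactly $N$.

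Finally, I would appeal to the dimension formula~\eqref{eq:dim_equal} of Lemma~\ref{lem:column_rank}, which yields
\[
\dim_{\K}\lin_{\K}\{\bvarphi(x):x\in\Omega\} \;=\; \dim_{\K}\lin_{\K}\{\varphi_j:j=1,\ldots,n\} \;=\; N \,.
\]
Thus the right-hand side of the claimed identity also has dimension $N$, and since a finite-dimensional subspace contained in another subspace of the same dimension must coincide with it, the two hulls are equal.

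The main (and essentially only) potential obstacle is ensuring that the statements of Lemma~\ref{lem:column_rank} are being applied correctly, namely that the linear independence of the $\bvarphi(x_j)$ is genuinely part of that lemma's output (it is, by the full-column-rank property of the displayed matrix) and that~\eqref{eq:dim_equal} supplies the correct dimension on the right. No further argument is needed.
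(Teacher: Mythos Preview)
Your proof is correct and follows essentially the same approach as the paper: both arguments use the linear independence of $\bvarphi(x_1),\ldots,\bvarphi(x_N)$ from Lemma~\ref{lem:column_rank} together with the dimension identity~\eqref{eq:dim_equal} and the trivial inclusion. The only cosmetic difference is that the paper phrases the non-trivial inclusion via maximality (any further $\bvarphi(x)$ must lie in the span of the $\bvarphi(x_j)$), whereas you deduce equality from the subspace-of-equal-dimension argument; the content is the same.
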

\begin{proof}
The $N$ vectors $\bvarphi(x_1),\ldots,\bvarphi(x_N)$
are linearly independent in $\K^n$, according to Lemma~\ref{lem:column_rank}, and $N=\dim_{\K} \lin_{\K} \{ \bvarphi(x) : x\in\Omega \}$ is the maximal possible number of linearly independent vectors in $\{ \bvarphi(x) : x\in\Omega \}$. Hence,
$\bvarphi(x)\in\lin_{\K}\{ \bvarphi(x_j):j=1,\ldots,N\}$ for every $x\in\Omega$, implying $\lin_{\K}\{ \bvarphi(x):x\in\Omega \} \subseteq \lin_{\K}\{ \bvarphi(x_j):j=1,\ldots,N\}$. The opposite inclusion $\lin_{\K}\{ \bvarphi(x):x\in\Omega \} \supseteq \lin_{\K}\{ \bvarphi(x_j):j=1,\ldots,N\}$ is obvious.  
\end{proof}

We next present a  
slightly modified approach to the proof of Theorem~\ref{thm:quad1}, so that its principles can be generalized to the case of Tchakaloff quadrature. 
To this end, we introduce the following notion.

\begin{definition}\label{def:int_lin_hull}
Let $M\subseteq\K^n$, $n\in\N$, and $\K\in\{\R,\C\}$.
The \emph{integral $\K$-linear hull} of $M$ is defined by
\begin{align*}
\intlin_{\K} M := \bigg\{ \int_{\Omega} \bv(x)\dint\mu(x) : (\Omega,\Sigma,\mu) \text{ $\K$-measure space},\, \bv\in L_{1}(\Omega,\mu;M)  \bigg\} \,.
\end{align*}
\end{definition}

Since one can choose $\Omega=\N$ and a $\K$-valued discrete measure $\mu$ on $\N$, one directly sees the inclusion
\begin{align}\label{lin_hull_inclusions}
\lin_{\K} M \subseteq \intlin_{\K} M  \,.
\end{align}
But we even have the following result. 

\begin{lemma}\label{lem:intlin_eq_lin}
It holds for any set $M\subseteq\K^n$, where $n\in\N$ and $\K\in\{\R,\C\}$, 
\begin{align*}
\intlin_{\K} M = \lin_{\K} M  \,.
\end{align*}
\end{lemma}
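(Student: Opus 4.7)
The plan is to prove the non-trivial inclusion $\intlin_{\K} M \subseteq \lin_{\K} M$ by exploiting that $\K^n$ is finite-dimensional, so that $\lin_{\K} M$ is automatically a closed linear subspace of $\K^n$ and can be described as the intersection of kernels of finitely many linear functionals. The idea is then to test membership in $\lin_{\K} M$ via these functionals, which commute with integration.

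In detail, the one inclusion $\lin_{\K} M \subseteq \intlin_{\K} M$ is already recorded in~\eqref{lin_hull_inclusions}. For the reverse, I would fix an arbitrary element $\bz = \int_{\Omega} \bv(x)\dint\mu(x) \in \intlin_{\K} M$, where $(\Omega,\Sigma,\mu)$ is a $\K$-measure space and $\bv\in L_1(\Omega,\mu;M)$. Writing $W := \lin_{\K} M \subseteq \K^n$, the subspace $W$ has some dimension $d \le n$ and admits an annihilator
\begin{align*}
W^{\circ} := \{ \ell\in(\K^n)^{\ast} : \ell|_{W}=0 \} \,,
\end{align*}
which is finite-dimensional and satisfies $W = \bigcap_{\ell\in W^{\circ}}\ker(\ell)$. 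For each $\ell\in W^{\circ}$, write $\ell(\bw)=\sum_{i=1}^{n}c_i w_i$ with coefficients $c_i\in\K$. By the $\K$-linearity of the component-wise integral (which extends from positive measures to $\K$-measures by Jordan/Hahn decomposition and splitting into real and imaginary parts), I obtain
\begin{align*}
\ell(\bz) = \sum_{i=1}^{n} c_i\int_{\Omega} v_i(x)\dint\mu(x) = \int_{\Omega} \ell(\bv(x))\dint\mu(x) \,.
\end{align*}
Since $\bv(x)\in M\subseteq W$ for every $x\in\Omega$, one has $\ell(\bv(x))=0$ pointwise, hence $\ell(\bz)=0$.

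As $\ell\in W^{\circ}$ was arbitrary, it follows that $\bz\in\bigcap_{\ell\in W^{\circ}}\ker(\ell)=W=\lin_{\K}M$, which yields the desired inclusion. The only step that requires a little care is the interchange of the functional $\ell$ with the integral for a genuine $\K$-measure $\mu$ rather than a positive one, but this is routine once one decomposes $\mu$ into its real and imaginary parts and these in turn via the Jordan decomposition; I would simply cite the definition of $\int\bv\dint\mu$ as the vector whose $i$-th entry is $\int v_i\dint\mu$, after which the identity $\ell(\bz)=\int\ell(\bv)\dint\mu$ is immediate from $\K$-linearity of the scalar integral. I do not anticipate any genuine obstacle; the key insight is simply that in finite dimension the linear hull is already closed under passage to integrals, so no new points arise.
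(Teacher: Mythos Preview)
Your proof is correct and follows essentially the same approach as the paper: both arguments describe $\lin_{\K} M$ dually---the paper via the orthogonal complement $Z:=M^\perp$ and the identity $\lin_{\K} M = Z^\perp$, you via the annihilator $W^\circ$ and the identity $W=\bigcap_{\ell\in W^\circ}\ker(\ell)$---and then verify that the integral $\int_\Omega \bv(x)\dint\mu(x)$ is killed by every such functional because the integrand is. The two dual descriptions are equivalent under the Riesz identification $(\K^n)^\ast\cong\K^n$, so there is no substantive difference.
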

\begin{proof}
Due to~\eqref{lin_hull_inclusions}, it only remains to prove the opposite inclusion. We can describe $\lin_{\K} M$ dually by linear functionals. Let us put $Z:=M^\perp$. Then
\begin{align*}
  \lin_{\K} M = Z^\perp.
\end{align*}
Further, for $\bv_\mu:=\int_{\Omega} \bv(x)\dint\mu(x) \in\intlin_{\K} M$ and every $\bn\in Z$ it holds
\begin{align*}
\langle \bn, \bv_\mu \rangle =  \int_{\Omega} \langle \bn,\bv(x) \rangle \dint\mu(x) = 0 \,.
\end{align*}
As a consequence, $\bv_\mu\in \lin_{\K} M$.
\end{proof}

Lemma~\ref{lem:intlin_eq_lin} allows to formulate a generalization of Corollary~\ref{cor:column_rank}. 

\begin{proposition}[Dimension-based subsampling]
\label{ContSub} 
    Let $\varphi_1,\ldots,\varphi_n:\Omega\to\K$ be functions and $\bvarphi(x) := (\varphi_1(x),\ldots,\varphi_n(x))^\top\in\K^{n}$.
    Then there exist 
    \[
    N := \dim_{\K}\lin_{\K}\{\varphi_j:j=1,\ldots,n\}
    \]
    points $x_1,\ldots,x_N\in\Omega$ so that
    \begin{equation*}
        \intlin_{\K} \{ \bvarphi(x) : x\in\Omega \} = \lin_{\K}\{ \bvarphi(x_j): j=1,\ldots,N \} \,.
    \end{equation*}
\end{proposition}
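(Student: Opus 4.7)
The proposition essentially chains together two results already established in the excerpt, so the plan is short and direct. First, I would apply Lemma~\ref{lem:intlin_eq_lin} to the set $M := \{\bvarphi(x):x\in\Omega\}\subseteq\K^n$, which immediately yields
\[
\intlin_{\K}\{\bvarphi(x):x\in\Omega\} = \lin_{\K}\{\bvarphi(x):x\in\Omega\}.
\]
This collapses the integral hull (a priori a much richer object, as it allows arbitrary $\K$-valued measures) down to the ordinary finite linear hull.

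Second, I would invoke Corollary~\ref{cor:column_rank}, which provides points $x_1,\ldots,x_N\in\Omega$, with $N=\dim_{\K}\lin_{\K}\{\varphi_j:j=1,\ldots,n\}$, such that
\[
\lin_{\K}\{\bvarphi(x):x\in\Omega\} = \lin_{\K}\{\bvarphi(x_j):j=1,\ldots,N\}.
\]
Concatenating the two displayed equalities gives the claim. The count $N$ is exactly the quantity appearing in the corollary, courtesy of the dimension identity~\eqref{eq:dim_equal} in Lemma~\ref{lem:column_rank}.

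There is essentially no obstacle here, since all of the substantive work has been offloaded: the nontrivial step is Lemma~\ref{lem:intlin_eq_lin}, where one rules out extra vectors in the integral hull by testing against the annihilator $M^\perp$, and the combinatorial/rank extraction of the $N$ subsampling points is done in Lemma~\ref{lem:column_rank}. If I wanted to make the exposition self-contained I might briefly re-emphasize why the choice of $\Omega$ and $\mu$ in Definition~\ref{def:int_lin_hull} is irrelevant once one knows the integral hull equals the linear hull, but otherwise the proof is a two-line composition.
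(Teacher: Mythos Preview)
Your proposal is correct and matches the paper's proof essentially line for line: apply Lemma~\ref{lem:intlin_eq_lin} to collapse the integral hull to the ordinary linear hull, then use Corollary~\ref{cor:column_rank} to extract the $N$ points. There is nothing to add.
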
 
\begin{proof}
By Lemma~\ref{lem:intlin_eq_lin} we have
\[
\intlin_{\K} \Big\{ \bvarphi(x)  : x\in\Omega \Big\} = \lin_{\K} \Big\{ \bvarphi(x)  : x\in\Omega \Big\} \,.
\]
Further, due to Corollary~\ref{cor:column_rank}, there are points $x_1,\ldots,x_N\in\Omega$ such that
\begin{equation*}
           \lin_{\K} \Big\{ \bvarphi(x)  : x\in\Omega \Big\} = \lin_{\K}\Big\{ \bvarphi(x_j):j=1,\ldots,N\Big\} \,. \qedhere
\end{equation*}
\end{proof}

This last proposition is a powerful tool that
allows to reformulate
the proof of Theorem~\ref{thm:quad1} in such a way
that it essentially becomes a sequence of $4$ simple 
steps. 

\begin{tcolorbox}
[ colback=white,
  colframe=black,
  arc=0mm,
  boxrule=1pt,
  fonttitle=\bfseries
]
\begin{proof}[\textbf{\textup{Modified proof of Theorem~\ref{thm:quad1} (based on Proposition~\ref{ContSub})}}]

~\\[0.8ex]
\emph{Step~1.} Choose a $\K$-linear basis $\varphi_1,\ldots,\varphi_n:\Omega\to\K$ of $V_n$ and build $\bvarphi(x) := (\varphi_1(x),\ldots,\varphi_n(x))^\top$.

\emph{Step~2.} Observe that $\bvarphi_{\mu} :=
            \int_\Omega \bvarphi(x) \dint\mu(x)   \in \intlin_{\K}\{ \bvarphi(x) : x\in\Omega \}$.
    
\emph{Step~3.} Apply Proposition~\ref{ContSub}, which yields nodes $x_1,\ldots,x_{N}\in\Omega$ with \[
\bvarphi_{\mu}\in \lin_{\K}\{ \bvarphi(x_j): j=1,\ldots,N \} \,.
\]

\emph{Step~4.}   The quadrature formula follows by linearity.
\end{proof}
\end{tcolorbox}

We will see that Tchakaloff's theorem can be proved
based on the same proof idea.
But before we turn to Tchakaloff, let us consider exact quadrature with real weights.


\subsection{Exact quadrature with real weights}

If we only 
allow real weights in the quadrature formula, some adaption is necessary in Theorem~\ref{thm:quad1} in case $\K=\C$,  
wherefore we define the $\R$-linear projection $\Re$ which maps functions $f:\Omega\to\K$ 
to functions $\Re(f):\Omega\to\R$ 
via
\begin{align}\label{eqdef:Re-Op}
\Re(f) := \Re\circ f \,.
\end{align}
Further, for the $n$-dimensional 
subspace $V_n$ of $L_1(\Omega,\mu;\K)$ let us consider the associated $\R$-vector space
\[
\Re(V_n) := \Big\{ \Re(f)  : f\in V_n \Big\} \,,
\]
and let us make the following definition.

\begin{definition}\label{def:eff_dim}
 Let $\K\in\{\R,\C\}$. 
 The \emph{effective real dimension} of $V_n$ is given by
 \begin{align*}
 \effdim_{\R} V_n := \dim_{\R} \operatorname{Re}(V_n) \,.
 \end{align*}
\end{definition}

In case $\K=\R$, it clearly holds $\effdim_{\R}V_n=\dim_{\R}V_n$. In case $\K=\C$, the effective dimension $\effdim_{\R}V_n$ is in the range
\begin{align*}
\dim_{\C} V_n \le \effdim_{\R}V_n \le 2 \dim_{\C} V_n \,.
\end{align*}

We have the following theorem. 

\begin{theorem}\label{thm:quad2}
Let $(\Omega,\Sigma,\mu)$ be a measure space with real measure $\mu$, whereby $\Omega$ may be any generic set, and let $\K\in\{\R,\C\}$. For each $n$-dimensional subspace $V_n\subseteq L_1(\Omega,\mu;\K)$ there exists a set of $N$ points $x_1,\ldots,x_N\in\Omega$, $N$ being the effective real dimension of $V_n$, see Definition~\ref{def:eff_dim}, and associated real numbers $\mu_1,\ldots,\mu_N\in\R$ such that
\begin{align*}
\int_{\Omega} f(x) \dint\mu(x) = \sum_{j=1}^{N} \mu_j f(x_j) \quad\text{ for all $f\in V_n$}\,.
\end{align*}
\end{theorem}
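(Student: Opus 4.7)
The plan is to mimic the four-step \emph{modified proof} of Theorem~\ref{thm:quad1}, but first reduce the problem to a purely real one by passing from $V_n$ to the associated real space $\operatorname{Re}(V_n)$ of effective real dimension $N$.

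The reduction rests on the following observation. Because $\mu$ is a real (signed) measure and the sought weights $\mu_j$ are real, both sides of the desired identity $\int_\Omega f\dint\mu = \sum_{j=1}^{N}\mu_j f(x_j)$ are $\R$-linear in $f$ and split into real and imaginary parts. When $\K=\C$, for every $f\in V_n$ we have $if\in V_n$ and therefore $\Im(f) = -\Re(if)\in\operatorname{Re}(V_n)$, so both $\Re(f)$ and $\Im(f)$ lie in $\operatorname{Re}(V_n)$; when $\K=\R$, this is trivial. Consequently it suffices to produce points $x_1,\dots,x_N\in\Omega$ and real weights $\mu_1,\dots,\mu_N\in\R$ so that the quadrature formula is exact on $\operatorname{Re}(V_n)$, since it then extends to all of $V_n$ by $\R$-linearity.

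Now I apply the four-step scheme to the real space $\operatorname{Re}(V_n)$, working over $\K=\R$. First, choose an $\R$-basis $\psi_1,\dots,\psi_N\colon\Omega\to\R$ of $\operatorname{Re}(V_n)$, where by definition $N=\effdim_{\R}V_n$; note $\psi_j\in L_1(\Omega,|\mu|;\R)$ because $|\psi_j|\le|\varphi|$ for any $\varphi\in V_n$ with $\Re(\varphi)=\psi_j$. Form the vector function $\bpsi(x):=(\psi_1(x),\dots,\psi_N(x))^\top\in\R^{N}$. Second, observe that
\[
\bpsi_\mu := \int_\Omega\bpsi(x)\dint\mu(x)\in \intlin_{\R}\{\bpsi(x):x\in\Omega\},
\]
which is legitimate because $\mu$ is an $\R$-measure. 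Third, invoke Proposition~\ref{ContSub} with $\K=\R$ applied to the functions $\psi_1,\dots,\psi_N$, which are $\R$-linearly independent, to obtain points $x_1,\dots,x_N\in\Omega$ with
\[
\intlin_{\R}\{\bpsi(x):x\in\Omega\}=\lin_{\R}\{\bpsi(x_j):j=1,\dots,N\}.
\]
Fourth, extract real coefficients $\mu_1,\dots,\mu_N\in\R$ expressing $\bpsi_\mu$ in this span and conclude by $\R$-linearity that the quadrature identity holds on $\operatorname{Re}(V_n)$, and hence on $V_n$ by the reduction above.

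I do not expect any serious obstacle: the only mildly technical points are verifying that the complex-valued case does reduce cleanly to the real-valued one via the identity $\Im(f)=-\Re(if)$, and checking that Proposition~\ref{ContSub} yields the announced number of $N$ nodes (which it does precisely because the $\psi_j$ form a basis of $\operatorname{Re}(V_n)$, making $\dim_{\R}\lin_{\R}\{\psi_j\}=N$). The positive Tchakaloff-type sign restriction plays no role here, so the argument is strictly easier than the one to come for Theorem~\ref{thm:tchak}.
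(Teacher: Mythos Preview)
Your proposal is correct and follows essentially the same approach as the paper: both reduce the complex case to the real one by observing that $\Im(f)=-\Re(if)\in\operatorname{Re}(V_n)$ (equivalently, $\operatorname{Re}(V_n)=\operatorname{Im}(V_n)$), and then obtain a real-weighted quadrature exact on $\operatorname{Re}(V_n)$. The only cosmetic difference is that the paper invokes Theorem~\ref{thm:quad1} as a black box for this last step, whereas you unfold its modified proof via Proposition~\ref{ContSub}; the content is the same.
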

\begin{proof}
If $\K=\R$, Theorem~\ref{thm:quad1} already provides the statement since $\effdim_{\R}V_n=\dim_{\R}V_n$.

To prove the case $\K=\C$,
let us define the $\R$-linear projection $\Im$ given by
\begin{align}\label{eqdef:Im-Op}
\Im(f) :=  \Im\circ f \,.
\end{align}
Then, like $\Re(V_n)$, also $\Im(V_n) := \{ \Im(f)  : f\in V_n \}$ is an $\R$-vector space which happens
to be equal to $\Re(V_n)$, i.e., it holds $\Re(V_n)=\Im(V_n)$.
Hence, an exact quadrature rule on $\Re(V_n)$ 
is automatically
exact on $\Im(V_n)$. The assertion is thus again a consequence of Theorem~\ref{thm:quad1}, applied to the real function space $\Re(V_n)\subseteq L_1(\Omega,\mu;\R)$.
\end{proof}

\begin{remark}
In case $\K=\R$, one can find $N=n$ nodes in Theorem~\ref{thm:quad2}. In case $\K=\C$, the number $N$ is in the range $n\le N \le 2n$.
\end{remark}


\subsection{Tchakaloff quadrature}
\label{ssec:Tchakaloff quadrature}

When one strives for quadrature with non-negative weights, as provided by the original theorem of Tchakaloff from 1957 and subsequent generalizations,
things are more complicated. It took quite a while to arrive at its most general form~\cite[Thm.~5.1]{BerSas12} proved by Bisgaard in 2012. Essentially, this result is recited in Theorem~\ref{thm:tchak}, the latter being an extension to complex-valued functions. A further extension, handling the question of positive discretizability of linear functionals, is proved in the next section in Theorems~\ref{thm:pos_functionals} and~\ref{thm:pos_functionals_2}.
In this subsection, we provide a short concise proof of Theorem~\ref{thm:tchak}, fully self-contained and with rather elementary arguments.
As we will see, the proof idea can be extracted from the proofs of the preceding Theorems~\ref{thm:quad1} and~\ref{thm:quad2}.

The main challenge is the adaption of the steps in the second proof of Theorem~\ref{thm:quad1} to the convex geometric situation, i.e., to replace:
\begin{enumerate}[label=\normalfont(\roman*)]
\item (Integral) linear hull (Def.~\ref{def:lin_hull} \& Def.~\ref{def:int_lin_hull}) $\rightsquigarrow$ (Integral) conic hull (Def.~\ref{def:conic_hull} \& Def.~\ref{def:int_conic_hull}).
\item Dual description of subspaces (in the proof of Lem.~\ref{lem:intlin_eq_lin}) $\rightsquigarrow$ Supporting hyperplane theorem (for the proof of Lem.~\ref{lem:intcone_eq_cone}, see e.g.~\cite[pp.~50-51]{BoydVandenberghe2004}). 
\item 
Dimension-based subsampling (Prop.~\ref{ContSub}) $\rightsquigarrow$ Carathéodory-Tchakaloff subsampling (Prop.~\ref{prop:ContCarSub}).
\end{enumerate}

We begin with the following definition.

\begin{definition}\label{def:conic_hull}
Let $M\subseteq\K^n$ be a set, where $n\in\N$ and $\K\in\{\R,\C\}$.
\begin{enumerate}[label=\normalfont(\roman*)]
\item The \emph{convex hull} of $M$ is defined by
\begin{align*}
\conv M := \Big\{ \sum_{j=1}^{N} \lambda_j \bv_j : \bv_j\in M,\, \lambda_j\ge0,\,\sum_{j=1}^{N} \lambda_j=1,\, N\in\N \Big\} \,. 
\end{align*}
\item The \emph{conic hull} of $M$ is defined by
\begin{align*}
\cone M := \Big\{ \sum_{j=1}^{N} \lambda_j \bv_j : \bv_j\in M,\, \lambda_j\ge0,\, N\in\N \Big\} \,.
\end{align*}
\end{enumerate}
\end{definition}

We further introduce the corresponding integral versions.

\begin{definition}\label{def:int_conic_hull}
Let $M\subseteq\K^n$ be a set, where $n\in\N$ and $\K\in\{\R,\C\}$.
\begin{enumerate}[label=\normalfont(\roman*)]
\item The \emph{integral convex hull} of $M$ is defined by
\begin{align*}
\intconv M := \bigg\{ \int_{\Omega} \bv(x)\dint\mu(x) : (\Omega,\Sigma,\mu) \text{ probability measure space},\, \bv\in L_{1}(\Omega,\mu;M)  \bigg\} \,.
\end{align*}
\item The \emph{integral conic hull} of $M$ is defined by
\begin{align*}
\intcone M := \bigg\{ \int_{\Omega} \bv(x)\dint\mu(x) : (\Omega,\Sigma,\mu) \text{ positive measure space},\, \bv\in L_{1}(\Omega,\mu;M)  \bigg\} \,.
\end{align*}
\end{enumerate}
\end{definition}

Since one can choose $\Omega=\N$ and the counting measure $\mu$ on $\N$, one directly sees the inclusions
\begin{align}\label{hull_inclusions}
\conv M \subseteq \intconv M \quad,\quad \cone M \subseteq \intcone M \,.
\end{align}

We now aim for an analogous result as Lemma~\ref{lem:intlin_eq_lin}.
At first sight, it may come as a surprising fact that indeed the integral notions of the hull are equal to the corresponding classical finite notions. For the proof of this fact, we need to recall some facts from convex geometry, see e.g.~\cite[pp.~50-51]{BoydVandenberghe2004}.

\begin{definition}\label{def:support_hyperplane}
A \emph{supporting hyperplane} $\cE$ of a set $M\subseteq\R^n$ is a hyperplane that has both of the following two properties:
\begin{enumerate}[label=\normalfont(\roman*)]
\item
    $M$ is entirely contained in one of the two closed half-spaces bounded by $\cE$.
\item
    $M$ has at least one boundary-point on $\cE$.
\end{enumerate}
\end{definition}

\begin{theorem}[Supporting hyperplane theorem]\label{thm:support_hyperplane}
 If $M$ is a convex set in $\R^n$, $m_0$ is a point on the boundary of $M$, then there exists a supporting hyperplane of $M$ containing $m_0$.
\end{theorem}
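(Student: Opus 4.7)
The plan is to prove the supporting hyperplane theorem by first reducing to the case of closed convex $M$ and then splitting on whether $\operatorname{int} M$ is empty. For the reduction, I rely on the standard fact that for a convex set $M\subseteq\R^n$ the interiors of $M$ and $\overline{M}$ coincide (both being empty if either is), so $\partial M = \overline{M}\setminus\operatorname{int} M$ is contained in $\partial\overline{M}$, and any supporting hyperplane of $\overline{M}$ through $m_0$ automatically supports $M$ through $m_0$. I therefore assume $M$ is closed.

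If $\operatorname{int} M = \emptyset$, the affine hull of $M$ has dimension at most $n-1$, so I extend it to any hyperplane $\cE$, which then contains all of $M$ and in particular $m_0$, trivially satisfying both clauses of Definition~\ref{def:support_hyperplane}.

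For the genuinely nontrivial case $\operatorname{int}M\neq\emptyset$, the plan is to pick a sequence $y_k\to m_0$ with $y_k\notin M$, which is possible because $m_0\notin\operatorname{int}M$. For each $k$, let $p_k\in M$ be the unique nearest point to $y_k$; existence and uniqueness are guaranteed by $M$ being closed and convex, and the characterising variational inequality reads
\begin{align*}
\langle y_k-p_k,\,m-p_k\rangle\le 0 \qquad\text{for all } m\in M\,.
\end{align*}
Normalising to $v_k:=(y_k-p_k)/\|y_k-p_k\|\in\R^n$, this becomes $\langle v_k,m\rangle\le\langle v_k,p_k\rangle$ for every $m\in M$. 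From $\|p_k-y_k\|\le\|m_0-y_k\|\to 0$ I obtain $p_k\to m_0$, and Bolzano-Weierstrass on the unit sphere yields a subsequence with $v_k\to v$ for some unit vector $v$. Passing to the limit produces $\langle v,m\rangle\le \langle v,m_0\rangle$ for every $m\in M$, so the hyperplane $\cE:=\{x\in\R^n:\langle v,x\rangle=\langle v,m_0\rangle\}$ contains $m_0$ and bounds $M$ on one side, which is precisely what the theorem demands.

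The main technical obstacle will be justifying the metric-projection step cleanly: the existence and uniqueness of $p_k$ (by minimising the squared distance on the bounded closed convex set $M\cap\overline{B(y_k,R)}$ for sufficiently large $R$, then invoking strict convexity of $\|\cdot\|^2$ to rule out ties), and the derivation of the variational inequality (by differentiating $t\mapsto\|y_k-(p_k+t(m-p_k))\|^2$ at $t=0^+$, using that $p_k+t(m-p_k)\in M$ for $t\in[0,1]$ by convexity). Both are standard in convex analysis, but they carry the real substance of the argument; the remainder reduces to a routine compactness argument on the unit sphere and passing a linear inequality to the limit.
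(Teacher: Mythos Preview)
Your proof is correct and follows the standard metric-projection approach (project exterior points onto the closed convex set, extract a limit of the unit normals). The paper, however, does not prove Theorem~\ref{thm:support_hyperplane} at all: it is stated as a classical fact from convex geometry with a reference to Boyd--Vandenberghe~\cite[pp.~50--51]{BoydVandenberghe2004}, and is used only as a black box in the proofs of Lemma~\ref{lem:intcone_eq_cone} and Theorem~\ref{thm:pos_functionals}. So there is nothing to compare against; you have supplied a valid self-contained argument where the paper simply cites the literature.
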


With this theorem, we can prove the desired statement.

\begin{lemma}\label{lem:intcone_eq_cone}
It holds for any set $M\subseteq\K^n$, where $n\in\N$ and $\K\in\{\R,\C\}$, 
\begin{align*}
(i)\quad\intconv M = \conv M \,,\quad
(ii)\quad \intcone M = \cone M \,.
\end{align*}
\end{lemma}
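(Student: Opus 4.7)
The inclusions $\conv M\subseteq\intconv M$ and $\cone M\subseteq\intcone M$ are already contained in~\eqref{hull_inclusions}, so only the reverse inclusions remain to be established. My plan is to prove (ii) first and then obtain (i) by the standard homogenization trick: embedding $M$ as $\tilde M:=M\times\{1\}\subseteq\K^{n+1}$, one checks that $\bv\in\intconv M$ iff $(\bv,1)\in\intcone\tilde M$ and $\bv\in\conv M$ iff $(\bv,1)\in\cone\tilde M$, so (i) follows immediately once (ii) is settled. The complex case $\K=\C$ is handled by identifying $\C^n\cong\R^{2n}$, since both $\cone$ and $\intcone$ use only non-negative real scalings.

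For (ii) I would argue by induction on $d:=\dim_\R\lin_\R M$. The base $d=0$ is trivial. For the inductive step, let $\bv_\mu=\int_\Omega\bv(x)\dint\mu(x)\in\intcone M$ with $\bv\in L_1(\Omega,\mu;M)$ and suppose, toward a contradiction, that $\bv_\mu\notin\cone M$. The crux will be to produce $\bn\neq 0$ with $\langle\bn,\bm{m}\rangle\ge 0$ for every $\bm{m}\in M$ and $\langle\bn,\bv_\mu\rangle\le 0$. If $\bv_\mu\notin\overline{\cone M}$, strict separation of the point from the closed convex set $\overline{\cone M}$ yields such $\bn$ with even $\langle\bn,\bv_\mu\rangle<0$. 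If $\bv_\mu\in\overline{\cone M}\setminus\cone M$, then $\bv_\mu$ is a relative boundary point of the closed convex cone $\overline{\cone M}$ (using that the relative interior of a convex set coincides with that of its closure), and Theorem~\ref{thm:support_hyperplane} will furnish a supporting hyperplane at $\bv_\mu$; since $\overline{\cone M}$ is a cone, the hyperplane can be translated through the origin, giving $\langle\bn,\bv_\mu\rangle=0$.

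In either case, the chain
\begin{align*}
 0 \;\ge\; \langle\bn,\bv_\mu\rangle \;=\; \int_\Omega\langle\bn,\bv(x)\rangle\dint\mu(x) \;\ge\; 0
\end{align*}
forces $\langle\bn,\bv(x)\rangle=0$ for $\mu$-a.e.\ $x$. Setting $H:=\bn^\perp$ and $M':=M\cap H$, I then obtain $\bv_\mu\in\intcone M'$ with $\dim_\R\lin_\R M'\le d-1$, and the inductive hypothesis gives $\bv_\mu\in\cone M'\subseteq\cone M$, the desired contradiction.

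The hard part will be the boundary case $\bv_\mu\in\overline{\cone M}\setminus\cone M$, in which no strict separation is available. The right device is the supporting hyperplane theorem combined with the observation that on a convex cone the supporting hyperplane at any relative boundary point can be taken to pass through the apex $0$; this is precisely what drops the ambient dimension by one and powers the induction.
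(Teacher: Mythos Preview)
Your argument is correct and runs parallel to the paper's, but with the two reductions reversed. The paper proves (i) first by induction on the \emph{ambient} dimension $n$: it shows $\bv_\mu\in\overline{\conv M}$ directly, splits into interior versus boundary, and in the boundary case applies Theorem~\ref{thm:support_hyperplane} to project to a hyperplane of dimension $n-1$; it then derives (ii) from (i) by rewriting an arbitrary positive measure as a density times a probability measure on the $\sigma$-finite set $\supp\bv$. You do the opposite: establish (ii) first by induction on the \emph{intrinsic} dimension $d=\dim_\R\lin_\R M$, and recover (i) via the homogenization $M\mapsto M\times\{1\}$. Both proofs rest on the same supporting hyperplane reduction; your intrinsic induction needs the relative form of Theorem~\ref{thm:support_hyperplane} (applied inside $\lin_\R M$) to guarantee $\bn\in\lin_\R M$ and hence an actual dimension drop, a point you correctly flag. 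Your homogenization step is arguably cleaner than the paper's density trick, which requires the auxiliary observation that $\supp\bv$ is $\sigma$-finite.

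Two cosmetic points worth tightening: your ``strict separation'' branch $\bv_\mu\notin\overline{\cone M}$ is in fact vacuous, since the displayed chain gives $0>\langle\bn,\bv_\mu\rangle\ge 0$ outright; it should be dispatched as an immediate contradiction rather than merged into the ``either case'' leading to the dimension reduction (where the choice of $\bn$ is not controlled to lie in $\lin_\R M$). And to conclude $\bv_\mu\in\intcone M'$ you should explicitly restrict the integration to the full-measure set $\{x:\langle\bn,\bv(x)\rangle=0\}$, so that the integrand genuinely takes values in $M'=M\cap H$; the paper handles the analogous step by redefining $\bv$ to be $0$ on the exceptional null set.
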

\begin{proof}
Due to~\eqref{hull_inclusions}, it only remains to prove the opposite inclusions. Let us first assume $\K=\R$. 
We can then prove (i) by induction on the dimension $n$.

$n=1$:\,
Since for $n=1$, in $\R$, the convex sets are precisely the intervals in $\R$, the assertion is clearly true for integrals over probability measures.

$n\rightsquigarrow n+1$:\, 
Assume $\bv_{\mu}=\int_{\Omega} \bv(x) \dint\mu(x)$ in $\R^{n+1}$ with $\bv\in L_1(\Omega,\mu;M)$, where $\mu$ is a probability measure. Our aim is to show $\bv_{\mu}\in C:=\conv M$. Clearly $\bv_{\mu}\in\overline{C}$, where $\overline{C}$ is the closure of $C$ in $\R^{n+1}$, which is also a convex set. 
We now distinguish the cases $\bv_{\mu}\in\operatorname{int} C$ and $\bv_{\mu}\in\operatorname{\partial} C=\overline{C}\backslash \operatorname{int} C$. 

In case $\bv_{\mu}\in\operatorname{int} C$, there is nothing more to show since $\bv_{\mu}\in C$ is a direct consequence. In case $\bv_{\mu}\in\partial C$, there exists a supporting hyperplane $\cE\subset\R^{n+1}$, see Def.~\ref{def:support_hyperplane} and Thm.~\ref{thm:support_hyperplane},  with $\bv_{\mu}\in\cE$
and $\cE\cap \operatorname{int} C=\emptyset$. Let $\bn\in\R^{n+1}$ be a normal vector pointing into $C$. Then $ \langle \bv(x), \bn\rangle \ge 0$ for all $x\in\Omega$ and
\[
0 = \langle \bv_{\mu}, \bn \rangle = \int_{\Omega}  \langle \bv(x), \bn\rangle \dint\mu(x)  \quad\Rightarrow\quad \langle \bv(x), \bn\rangle = 0 \quad\text{$\mu$-almost everywhere}\,.
\]
Hence, there is a $\mu$-null set $\cN_{\bv}\subseteq\Omega$ such that $\bv(x)\in\cE$ for all $x\in\Omega\backslash\cN_{\bv}$.
We then define a new function 
\begin{align*}
    \tilde{\bv}(x):=\begin{cases} \bv(x) \quad&,\, x\in \Omega\backslash\cN_{\bv} \,, \\ {\bm 0} &,\, x\in\cN_{\bv} \,, \end{cases}
\end{align*}
which is measurable with respect to the completion $\tilde{\mu}$ of $\mu$. 
We then have
$\bv_{\mu}=\int_{\Omega} \tilde{\bv}(x) \dint\tilde{\mu}(x)$, where, in contrast to $\bv$, the range of $\tilde{\bv}$ is fully contained in $\cE$.
Via a linear projection $T$ of $\cE$ onto $\R^n$, we further get
\[
T\bv_{\mu} = \int_{\Omega}  T\tilde{\bv}(x) \dint\tilde{\mu}(x)
\]
with $T\tilde{\bv}(x)\in\R^{n}$.
Therefore, by the induction hypothesis, 
\[
T\bv_{\mu} \in \intconv \{ T\tilde{\bv}(x) : x\in\Omega\}=\conv\{ T\tilde{\bv}(x) : x\in\Omega\} \,,
\]
which implies that there is a number $N\in\N$ with
\[
T\bv_{\mu} = \sum_{j=1}^{N} \lambda_j T\tilde{\bv}(x_j) 
\]
for certain points $x_1,\ldots,x_N\in\Omega$ and weights $\lambda_1,\ldots\lambda_N\ge0$ summing up to $1$. Since points $x_j\in\cN_{\bv}$ do not contribute to this sum, we can additionally assume $x_j\in\Omega\backslash\cN_{\bv}$, whence $\bv(x_j)=\tilde{\bv}(x_j)\in M\cap\cE$. Recall that also $\bv_{\mu}\in\cE$,
wherefore $\bv_{\mu}=  \sum_{j=1}^{N} \lambda_j \bv(x_j)\in \conv M$ follows, proving (i) for $\K=\R$.

We next deduce (ii) from (i) in case $\K=\R$. 
For $\bv_{\mu}\in\intcone M$ there exists a $\sigma$-algebra $\Sigma$ and a positive
measure $\mu:\Sigma\to\R_{\ge0}$ and a function
$\bv\in L_1(\Omega,\mu;M)$ such that
\begin{align*}
\bv_{\mu}=\int_{\Omega}  \bv(x) \dint\mu(x) \,.
\end{align*}
The support $\Omega_{\bv}:=\supp\bv$ is $\sigma$-finite. 
Hence, there exists a
function $\omega\in L_1(\Omega_{\bv},\mu;\R_{>0})$ of integral $1$, which allows to rewrite $\bv_{\mu}\in\intcone M$ in the modified form
\[
\bv_{\mu}=\int_{\Omega}  \bv(x) \dint\mu(x)=\int_{\Omega_{\bv}}  \frac{\bv(x)}{\omega(x)} \dint\nu(x) \,,
\]
using the probability measure $\mathrm{d}\nu:=\omega\cdot\mathrm{d}\mu$.
Hence, in view of (i), 
\[
\bv_{\mu}\in \intconv\Big\{ \frac{\bv(x)}{\omega(x)} : x\in\Omega_{\bv} \Big\} = \conv\Big\{ \frac{\bv(x)}{\omega(x)} : x\in\Omega_{\bv} \Big\} \,.
\]
This proves (ii) since there is then $N\in\N$, $x_1,\ldots,x_N\in\Omega_{\bv}$, and $\lambda_1,\ldots,\lambda_N\ge0$ such that
\[
\bv_{\mu}=\sum_{j=1}^{N}  \frac{\lambda_j}{\omega(x_j)} \bv(x_j) \in \cone\{ \bv(x) : x\in\Omega \}  \,.
\]
For the case $\K=\C$ in (i) and (ii)  we identify $\C^{n}\cong \R^{2n}$.
\end{proof}

Next, recall Carathéodory's hull theorem, which is usually formulated for subsets of $\R^{n}$, see~\cite{Caratheodory11}. We state it for subsets of $\K^n$.

\begin{theorem}[Carathéodory theorem in $\K^{n}$]\label{CarClassic}
Let $M\subseteq \K^{n}$ with 
real dimension $N:=\dim_{\R} \lin_{\R} M$. If $x\in\cone M$,
then $x$ is the non-negative sum of at most 
$N$ points of $M$.
If $x\in\conv M$, then 
$x$ is the convex sum of at most 
$N+1$ points of $M$.
\end{theorem}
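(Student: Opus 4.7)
The plan is to prove the conic statement directly by a minimality/linear-dependence argument and then derive the convex statement from it via homogenization, so that no separate inductive proof is needed for the affine case.

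For the conic part, I would start by writing any representation $x=\sum_{j=1}^{k}\lambda_j v_j$ with $v_j\in M$ and $\lambda_j\ge 0$, choosing one with minimal $k$ (possible since $k\in\N_0$). Without loss of generality all $\lambda_j>0$, else drop zero terms. Suppose for contradiction that $k>N$. Since $v_1,\ldots,v_k\in \lin_{\R} M$ and this space has $\R$-dimension $N<k$, the vectors are $\R$-linearly dependent: there exist $\mu_1,\ldots,\mu_k\in\R$, not all zero, with $\sum_{j=1}^{k}\mu_j v_j=0$. By replacing $(\mu_j)$ with $(-\mu_j)$ if necessary, I may assume at least one $\mu_j$ is strictly positive. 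Consider then the one-parameter family
\begin{align*}
x=\sum_{j=1}^{k}\bigl(\lambda_j-t\mu_j\bigr)v_j\,,\quad t\ge 0\,,
\end{align*}
and let $t^{\ast}:=\min\{\lambda_j/\mu_j:\mu_j>0\}>0$. At $t=t^{\ast}$ all coefficients remain $\ge 0$ but at least one becomes zero, producing a representation with at most $k-1$ summands, contradicting minimality. Hence $k\le N$.

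For the convex part, I would use the homogenization trick. Given $x\in\conv M$, write $x=\sum_{j=1}^{m}\lambda_j v_j$ with $\lambda_j\ge 0$, $\sum\lambda_j=1$, $v_j\in M$. Pass to $\K^n\times\R$ and set $\tilde v_j:=(v_j,1)$ and $\tilde x:=(x,1)$. Then $\tilde x=\sum_{j}\lambda_j\tilde v_j\in\cone\tilde M$, where $\tilde M:=\{(v,1):v\in M\}$. The key observation is
\begin{align*}
\dim_{\R}\lin_{\R}\tilde M \le \dim_{\R}\lin_{\R} M + 1 = N+1\,,
\end{align*}
because projecting onto the first $n$ $\K$-coordinates sends $\lin_{\R}\tilde M$ into $\lin_{\R} M$ with a kernel of $\R$-dimension at most $1$. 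Applying the already-established conic statement to $\tilde x$ in $\K^n\times\R$ yields $\tilde x=\sum_{j=1}^{N+1}\alpha_j(w_j,1)$ with $\alpha_j\ge 0$ and $w_j\in M$. Reading off the last coordinate gives $\sum_j\alpha_j=1$, and reading off the first $n$ $\K$-coordinates gives $x=\sum_{j=1}^{N+1}\alpha_j w_j\in\conv\{w_1,\ldots,w_{N+1}\}$, as required.

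The whole argument uses only real linear algebra, so the same proof covers both $\K=\R$ and $\K=\C$ without needing an identification $\C^{n}\cong\R^{2n}$; the notion $N=\dim_{\R}\lin_{\R} M$ already accounts for the distinction. I do not expect a genuine obstacle: the only subtle point is verifying that the dimension bound $\dim_{\R}\lin_{\R}\tilde M\le N+1$ holds in the homogenization step, which follows from the kernel argument above, and the minimality argument for the conic case is entirely routine once one insists on working over $\R$ rather than $\K$.
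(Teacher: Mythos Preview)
Your proof is correct; both the minimality argument for the conic case and the homogenization to $\K^n\times\R$ for the convex case are the standard route to Carath\'eodory's theorem, and your handling of the real dimension $N=\dim_{\R}\lin_{\R}M$ cleanly covers $\K=\C$ without a separate identification step.

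The paper, however, does not prove this theorem at all: it is stated as a classical result and simply cited to Carath\'eodory's original 1911 paper~\cite{Caratheodory11}, with the remark that the usual formulation over $\R^n$ is here restated for $\K^n$. So there is no paper-proof to compare against; your argument supplies exactly the kind of elementary verification the paper omits.
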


The next lemma follows from Theorem~\ref{CarClassic} and corresponds to Corollary~\ref{cor:column_rank}.

\begin{lemma}[
Carathéodory subsampling]\label{CarSub} 
    Let $\varphi_1,\ldots,\varphi_n:\Omega\to\K$ be functions and put $\bvarphi(x) = (\varphi_1(x),\ldots,\varphi_n(x))^\top$.
    Let further $N := \dim_{\R}\lin_{\R}\{\bvarphi(x):x\in\Omega\}$.

    \begin{enumerate}[label=\normalfont(\roman*)]
    \item Then
    \begin{equation*}
        \cone\Big\{ \bvarphi(x) : x\in\Omega \Big\} 
        = \bigcup_{x_1,\ldots,x_N\in\Omega} \cone\Big\{ \bvarphi(x_j) : j=1,\ldots,N \Big\}\,.
    \end{equation*}
    \item
        Further
    \begin{equation*}
        \conv\Big\{ \bvarphi(x) : x\in\Omega \Big\} = \bigcup_{x_1,\ldots,x_{N+1}\in\Omega} \conv\Big\{ \bvarphi(x_j) : j=1,\ldots,N+1 \Big\}\,.
    \end{equation*}
    \end{enumerate}
\end{lemma}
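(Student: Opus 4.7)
The plan is a direct appeal to the classical Carathéodory theorem (Theorem~\ref{CarClassic}) applied to the set $M := \{\bvarphi(x) : x\in\Omega\}\subseteq\K^n$. Since by definition $\dim_{\R}\lin_{\R} M = N$, the hypotheses of that theorem are satisfied with the very same $N$ appearing in the statement here.

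For part (i), the inclusion ``$\supseteq$'' is immediate, as each $\cone\{\bvarphi(x_j) : j=1,\ldots,N\}$ is a subset of $\cone M$. For the reverse inclusion ``$\subseteq$'', I would pick any $y\in\cone M$. Theorem~\ref{CarClassic} provides $k\le N$ points $\bvarphi(y_1),\ldots,\bvarphi(y_k)\in M$ and coefficients $\lambda_1,\ldots,\lambda_k\ge 0$ with $y = \sum_{j=1}^{k}\lambda_j\bvarphi(y_j)$. If $k<N$, I simply pad the list by choosing arbitrary additional $y_{k+1},\ldots,y_N\in\Omega$ and setting $\lambda_{k+1}=\cdots=\lambda_N=0$, which witnesses $y\in\cone\{\bvarphi(y_j) : j=1,\ldots,N\}$ and hence membership in the right-hand union.

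For part (ii), the argument mirrors (i) verbatim with $N+1$ in place of $N$. The only nuance is that the coefficients must sum to $1$; this is preserved under padding since appending points with weight $0$ does not alter $\sum_j\lambda_j$.

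There is no genuine obstacle here. The only minor point worth mentioning is the discrepancy between the ``at most $N$'' (resp.\ ``at most $N+1$'') points supplied by Carathéodory and the exact $N$ (resp.\ $N+1$) indexing slots in the union on the right-hand side; the padding trick with zero coefficients reconciles this cleanly. In essence, the lemma is just a reformulation of Theorem~\ref{CarClassic} adapted to the parametrization of $M$ via points of $\Omega$, and it stands in exact analogy to Corollary~\ref{cor:column_rank} in the linear setting.
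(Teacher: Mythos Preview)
Your proposal is correct and matches the paper's approach: the paper simply states that the lemma follows from Theorem~\ref{CarClassic} and corresponds to Corollary~\ref{cor:column_rank}, without spelling out further details. Your padding argument to reconcile ``at most $N$'' with ``exactly $N$ slots'' is the natural way to make this precise.
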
 

Together with Lemma~\ref{lem:intcone_eq_cone} we get from Lemma~\ref{CarSub} a general continuous version of Carathéodory's theorem. Such continuous versions are in the literature referred to as Carathéodory-Tchakaloff theorems. Algorithmic aspects of the corresponding subsampling procedure are investigated e.g.\ in~\cite{PiSoVi17}.

\begin{proposition}[Carathéodory-Tchakaloff subsampling]\label{prop:ContCarSub} 
    Let $\varphi_1,\ldots,\varphi_n:\Omega\to\K$ be functions and put $\bvarphi(x) := (\varphi_1(x),\ldots,\varphi_n(x))^\top$.
    Let further $N := \dim_{\R}\lin_{\R}\{\bvarphi(x):x\in\Omega\}$.

    \begin{enumerate}[label=\normalfont(\roman*)]
    \item For each $\displaystyle{\bvarphi_\mu \in\intcone\{ \bvarphi(x):x\in\Omega \} }$
        there exists a set $\{x_1,\ldots,x_{N} \} \subseteq \Omega$  of nodes such that
        \begin{equation*}
            \bvarphi_\mu \in \cone\{ \bvarphi(x_j): j=1,\ldots,N \}   \,.
        \end{equation*}
    \item For each $\displaystyle{\bvarphi_\mu \in\intconv\{ \bvarphi(x):x\in\Omega \}}$
        there exists a set $\{x_1,\ldots,x_{N+1} \} \subseteq \Omega$  of nodes such that
        \begin{equation*}
            \bvarphi_\mu \in \conv\{ \bvarphi(x_j): j=1,\ldots,N+1 \}   \,.
        \end{equation*}
    \end{enumerate}
\end{proposition}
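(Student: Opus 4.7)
The proposition is a direct synthesis of two results already established in the excerpt: Lemma~\ref{lem:intcone_eq_cone}, which collapses the integral hulls onto the ordinary finite hulls, and Lemma~\ref{CarSub} (equivalently Theorem~\ref{CarClassic}), which performs the Carathéodory-style subsampling in the finite setting. So my plan is to execute the proof in two short steps, one for (i) and one for (ii), with essentially the same structure.

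For part (i), I would first invoke Lemma~\ref{lem:intcone_eq_cone}(ii) with $M := \{\bvarphi(x): x\in\Omega\}\subseteq \K^n$ to conclude that
\[
\bvarphi_\mu \in \intcone\{\bvarphi(x): x\in\Omega\} = \cone\{\bvarphi(x): x\in\Omega\}.
\]
Then I would apply Lemma~\ref{CarSub}(i), whose bound is stated in terms of $N = \dim_\R \lin_\R\{\bvarphi(x):x\in\Omega\}$ (which is why the proposition is formulated with this particular $N$). This immediately produces nodes $x_1,\ldots,x_N\in\Omega$ with $\bvarphi_\mu \in \cone\{\bvarphi(x_j): j=1,\ldots,N\}$, as required.

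For part (ii), the argument is identical in form: apply Lemma~\ref{lem:intcone_eq_cone}(i) to get $\bvarphi_\mu \in \conv\{\bvarphi(x): x\in\Omega\}$, and then Lemma~\ref{CarSub}(ii) to extract nodes $x_1,\ldots,x_{N+1}\in\Omega$ with $\bvarphi_\mu \in \conv\{\bvarphi(x_j): j=1,\ldots,N+1\}$.

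Since all the genuine work is front-loaded into the proofs of Lemma~\ref{lem:intcone_eq_cone} (the supporting hyperplane argument plus the rewrite $\mathrm{d}\nu = \omega \cdot \mathrm{d}\mu$ that passes from positive measures to probability measures) and Theorem~\ref{CarClassic} (classical Carathéodory), there is no real obstacle in this final step. The only thing worth double-checking is the bookkeeping of the dimension: the $N$ appearing in Lemma~\ref{CarSub} is the \emph{real} dimension of $\lin_\R\{\bvarphi(x):x\in\Omega\}$, which matches the $N$ in the proposition, so the bounds $N$ and $N+1$ transfer without adjustment even in the complex case $\K=\C$ (where we are implicitly using the identification $\C^n\cong\R^{2n}$ already employed at the end of the proof of Lemma~\ref{lem:intcone_eq_cone}).
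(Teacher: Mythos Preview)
Your proof is correct and follows exactly the same approach as the paper: first invoke Lemma~\ref{lem:intcone_eq_cone} to reduce the integral hulls to the finite hulls, then apply Lemma~\ref{CarSub} to extract the required $N$ (respectively $N+1$) nodes. The paper's own proof is just this two-line argument, with part~(ii) dismissed as ``Analogous.''
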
 
\begin{proof}
(i)\:
By Lemma~\ref{lem:intcone_eq_cone} we have
\[
\bvarphi_\mu \in \intcone \Big\{ \bvarphi(x)  : x\in\Omega \Big\} = \cone \Big\{ \bvarphi(x) : x\in\Omega \Big\} \,.
\]
By Lemma~\ref{CarSub}
there are points $x_1,\ldots,x_N\in\Omega$ such that
\begin{equation*}
           \bvarphi_\mu \in\cone\{\bvarphi(x_j):j=1,\ldots,N\} \,.
\end{equation*}

(ii)\: Analogous.
\end{proof}

Since with Proposition~\ref{prop:ContCarSub} we now have a similar tool as Proposition~\ref{ContSub},
we are at last able to prove 
Theorem~\ref{thm:tchak} along the lines of the second proof of Theorem~\ref{thm:quad1}. 
The auxiliary lemma below gives an alternative description of the effective dimension.

\begin{lemma}\label{lem:effdim}
Let $\varphi_1,\ldots,\varphi_n:\Omega\to\K$ be functions and let
$V_n : =\lin_{\K} \{\varphi_1,\ldots,\varphi_n \}$. 
Further, put $\bvarphi(x):=(\varphi_1(x),\ldots,\varphi_n(x))^\top\in\K^{n}$.
Then
\begin{align*}
 \effdim_{\R}V_n 
 = \dim_{\R}\lin_{\R}\{\bvarphi(x):x\in\Omega\} \,.
\end{align*}
\end{lemma}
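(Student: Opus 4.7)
The plan is to reduce the statement to the already-established identity~\eqref{eq:dim_equal} from Lemma~\ref{lem:column_rank} applied to a suitable set of real-valued generating functions.

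In the case $\K=\R$ there is nothing to do: by definition $\effdim_{\R}V_n=\dim_{\R}V_n=\dim_{\R}\lin_{\R}\{\varphi_1,\ldots,\varphi_n\}$, and the right-hand side coincides with $\dim_{\R}\lin_{\R}\{\bvarphi(x):x\in\Omega\}$ by~\eqref{eq:dim_equal}. So the whole content of the lemma lives in the case $\K=\C$, and I would handle that next.

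For $\K=\C$, first I would unpack the definition of $\Re(V_n)$. Any $f\in V_n$ has the form $f=\sum_{j=1}^n c_j\varphi_j$ with $c_j=a_j+\ii b_j\in\C$, hence
\begin{equation*}
\Re(f)=\sum_{j=1}^n a_j\,\Re(\varphi_j)-\sum_{j=1}^n b_j\,\Im(\varphi_j),
\end{equation*}
so that $\Re(V_n)=\lin_{\R}\{\Re(\varphi_j),\Im(\varphi_j):j=1,\ldots,n\}$. Applying~\eqref{eq:dim_equal} from Lemma~\ref{lem:column_rank} to the $2n$ real-valued functions $\Re(\varphi_1),\Im(\varphi_1),\ldots,\Re(\varphi_n),\Im(\varphi_n)$ and introducing the auxiliary map
\begin{equation*}
\bpsi(x):=\bigl(\Re(\varphi_1(x)),\Im(\varphi_1(x)),\ldots,\Re(\varphi_n(x)),\Im(\varphi_n(x))\bigr)^\top\in\R^{2n},
\end{equation*}
I would obtain $\effdim_{\R}V_n=\dim_{\R}\Re(V_n)=\dim_{\R}\lin_{\R}\{\bpsi(x):x\in\Omega\}$.

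Finally, I would identify $\C^n$ with $\R^{2n}$ through the $\R$-linear isomorphism $(z_1,\ldots,z_n)\mapsto(\Re z_1,\Im z_1,\ldots,\Re z_n,\Im z_n)$, which maps $\bvarphi(x)$ to $\bpsi(x)$. Since this isomorphism preserves real dimensions of real-linear subspaces,
\begin{equation*}
\dim_{\R}\lin_{\R}\{\bpsi(x):x\in\Omega\}=\dim_{\R}\lin_{\R}\{\bvarphi(x):x\in\Omega\},
\end{equation*}
which finishes the complex case. The main (minor) obstacle is just careful bookkeeping between the two equivalent real forms; there is no deeper difficulty, because all the real work was already carried out inside Lemma~\ref{lem:column_rank}.
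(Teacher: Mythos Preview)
Your proof is correct and follows essentially the same route as the paper: compute $\Re(V_n)=\lin_{\R}\{\Re(\varphi_j),\Im(\varphi_j)\}$, apply the identity~\eqref{eq:dim_equal} from Lemma~\ref{lem:column_rank} to these $2n$ real functions, and then pass through the canonical $\R$-linear identification $\C^n\cong\R^{2n}$. The paper's version is only more terse in that it does not separate the real case and writes the last identification in one line.
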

\begin{proof}
We have $\effdim_{\R}V_n=\dim_{\R} \Re(V_n)$ and
\[
\Re(V_n)=\lin_{\R}\{\Re\circ\varphi_1,\Im\circ\varphi_1,\ldots,\Re\circ\varphi_n,\Im\circ\varphi_n\} \,.
\]
With Lemma~\ref{lem:column_rank}, we can thus argue
\begin{align*}
\effdim_{\R}V_n 
&=\dim_{\R}\lin_{\R}\{  (\Re(\varphi_1(x)),\Im(\varphi_1(x)),\ldots,\Re(\varphi_n(x)),\Im(\varphi_n(x)))^\top \in\R^{2n} : x\in\Omega \}  \\
&=\dim_{\R}\lin_{\R}\{  \bvarphi(x)\in\K^{n} : x\in\Omega \} \,. ~\qedhere
\end{align*}
\end{proof}

\begin{tcolorbox}
[ colback=white,
  colframe=black,
  arc=0mm,
  boxrule=1pt,
  fonttitle=\bfseries
]
\begin{proof}[\textbf{\textup{Proof of Theorem~\ref{thm:tchak}}}]

~\\[0.8ex]
\emph{Step~1.}
     Choose a basis $\varphi_1,\ldots,\varphi_n$
     of $V_n$ consisting of proper functions.
     
\emph{Step~2.}
     Define
     $\bvarphi(x):=(\varphi_1(x),\ldots,\varphi_n(x))^\top$ for $x\in\Omega$.
     Then
     \[
     \bvarphi: \Omega \to\K^{n} \quad\text{ is component-wise $\mu$-integrable}\,.
     \]
     We can hence define
     \[
     \bvarphi_\mu := \int_\Omega \bvarphi(x) \dint\mu(x) \in \intcone\{ \bvarphi(x) : x\in\Omega \} \,.
     \]
\emph{Step~3.}
    Taking Lemma~\ref{lem:effdim} into account, we then use Proposition~\ref{prop:ContCarSub}.
    
\emph{Step~4.}
   The assertion follows by linearity.
\end{proof}
\end{tcolorbox}

\subsection{Additional remarks}

We end this section with a few remarks. The first concerns the case of finite measures.

\begin{remark}\label{rem:finite_measure}
If $\mu$ in Theorems~\ref{thm:tchak},~\ref{thm:quad1},~\ref{thm:quad2} is assumed to be a finite measure, i.e., if $|\mu|(\Omega)<\infty$, then there exist $N+1$ nodes $x_1,\ldots,x_{N+1}\in\Omega$ with associated weights that fulfill the extra condition
\[
\sum_{j=1}^{N+1} \mu_j = \mu(\Omega) \,.
\]
\end{remark}
\begin{proof}
Let $\mathds 1_\Omega$ denote the $1$-function on $\Omega$, which is integrable if $|\mu|(\Omega)<\infty$. We can thus apply the theorems to the expanded space $V_{n+1} := V_n + \lin_{\K}\{ \mathds 1_\Omega \}\subseteq L_1(\Omega,\mu;\K)$ with
\begin{gather*}
    \effdim_{\R}V_{n+1}\le \effdim_{\R}V_n+1 \quad\text{and}\quad \dim_{\K} V_{n+1} \le n+1 \,. \qedhere
\end{gather*}
\end{proof}

Secondly, we remark that $N$ as stated in Theorems~\ref{thm:tchak},~\ref{thm:quad1},~\ref{thm:quad2} may not be the minimal number $N_{\rm opt}$ of exact quadrature points possible. In fact, from the dimension of the space $V_n\subseteq L_1(\Omega,\mu;\K)$ alone one cannot deduce a lower bound on this number. 
Integration on $L_1(\Omega,\mu;\K)$ is a $\K$-linear functional. Using the notation
\begin{align*}
\Lambda_{\rm int}&: L_1(\Omega,\mu;\K) \to \K \enspace,\quad f \mapsto \int_{\Omega} f(x)\dint\mu(x) \,,
\end{align*}
we can record the following observation.

\begin{remark}
Each $V_n\subseteq\ker \Lambda_{\rm int}$
admits a trivial exact quadrature rule with $1$ arbitrary node and associated weight $0$.
Even infinite-dimensional spaces can thus possess such rules.
\end{remark}

The next example shows that all values $1\le N_{\rm opt} \le n$ are possible for $N_{\rm opt}$ in subspaces $V_n\subseteq L_1(\Omega,\mu;\R)$ of real-valued functions with $\dim_{\R} V_n=n$. 

\begin{example}\label{ex:Nopt}
Let $(\R,\cL,\lambda)$ be the Lebesgue measure space and $\varphi_j,\psi_j:\R\to\R$ for $j=1,\ldots,n$ the functions
\begin{align*}
\varphi_j(x):=\chi_{(j-1,j)}(x) \quad,\quad  \psi_j(x):=(x-j+\tfrac{1}{2})\chi_{(j-1,j)}(x) \,,
\end{align*}
where $\chi_{(j-1,j)}$ denotes the characteristic function of the interval $(j-1,j)$. Put for $m\in\{1,\ldots,n\}$ 
\begin{align*}
V_{m,n}:= \lin_{\R} \Big\{\varphi_j,\,\psi_k :j=1,\ldots,m;\, k=m+1,\ldots,n\Big\} \,.
\end{align*}
Then $\dim_{\R} V_{m,n}=n$, but the minimal number of nodes for exact quadrature is $N_{\rm opt}=m$.
\end{example}

The previous example can be extended to prove that all values $1\le N_{\rm opt} \le \effdim_{\R} V_n$ are possible for $N_{\rm opt}$ in exact quadrature rules with real weights in complex-valued spaces $V_n\subseteq L_1(\Omega,\mu;\C)$. 
In this sense, the number of nodes provided by Theorems~\ref{thm:tchak}, \ref{thm:quad1}, and~\ref{thm:quad2}, is sharp. However, in many relevant settings, e.g., Gaussian quadrature of polynomials, the number of required points is significantly smaller, see e.g.~\cite{Cools1997,NoHi07}. More insight into the topic of minimal node sets is provided by Proposition~\ref{prop:Nopt} in the next section.
It illustrates that $N_{\rm opt}$ depends on the structure of the zero sets of the functions in $V_n$.



\section{Discretization of positive linear functionals}

Let $\K\in\{\R,\C\}$ and assume that $V_n:=\lin_{\K}\{ \varphi_1,\ldots,\varphi_n\}$ is a linear function space spanned by linearly independent functions
\[
\varphi_1,\ldots,\varphi_n:\Omega\to\K \quad\text{ on some generic set }\Omega\neq\emptyset\,.
\]
For any $\K$-linear functional $L:V_n\to\K$ there then exist $n=\dim_{\K}V_n$ nodes $x_1,\ldots,x_n\in\Omega$ such that
\begin{align*}
L f = \sum_{j=1}^{n} \lambda_j f(x_j) \quad\text{ for all }f\in V_n
\end{align*}
with weights $\lambda_1,\ldots,\lambda_n\in\K$. This fact is well-known and analogous to Theorem~\ref{thm:quad1}.

In this section we aim for conditions, when 
$L$ is discretizable with real or non-negative weights, analogous to the real-weighted and Tchakaloff discretization of the integral in Theorems~\ref{thm:quad2} and~\ref{thm:tchak}.

\begin{definition}
A $\K$-linear functional $L:V_n\to\K$ is called \emph{discretizable with real weights} if and only if there exists a node set $P=\{x_1,\ldots,x_N \} \subseteq \Omega$ and a real weight set $W=\{\lambda_1,\ldots,\lambda_N\}\subset\R$ such that
\begin{align*}
	L f = \sum_{j=1}^{N} \lambda_j f(x_j) \quad\text{for all } f\in V_n\,.
\end{align*}	
It is called \emph{real-preserving} if and only if it maps the purely real-valued functions in $V_n$ to
real values.
\end{definition}

\begin{remark}\label{rem:real-preserving}
Real-preserving functionals $L$ are also characterized by the property that they map purely imaginary-valued functions in $V_n$ to imaginary values.
\end{remark}

\begin{lemma}\label{lem:aux_functional}
A $\K$-linear functional $L:V_n\to\K$ is real-preserving if and only if there exists an $\R$-linear functional $\tilde{L}:\Re(V_n)\to\R$, where $\Re$ is the $\R$-linear map defined in~\eqref{eqdef:Re-Op}, such that
\begin{align}\label{eqref:ass_real-functional}
\Re\circ L = \tilde{L} \circ\Re \,.
\end{align}
\end{lemma}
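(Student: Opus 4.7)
The case $\K=\R$ is trivial: the real part map is the identity on $V_n$, and one simply takes $\tilde{L}=L$. So the plan is to focus on $\K=\C$, treating both implications separately.

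For the ``if'' direction, I would assume that such an $\tilde{L}:\Re(V_n)\to\R$ exists, and pick any real-valued $f\in V_n$. Since $\Re(f)=f$ and $\Re(if)=0$, the hypothesis yields $\Re(L(if))=\tilde{L}(0)=0$. By $\C$-linearity of $L$, however, $L(if)=\ii\,Lf$, so $\Re(\ii\,Lf)=-\Im(Lf)=0$. Hence $Lf\in\R$, which means $L$ is real-preserving.

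For the ``only if'' direction, the natural candidate is
\begin{align*}
\tilde{L}(g) := \Re\big(L f\big)\,,\qquad \text{where } f\in V_n \text{ is any function with } \Re(f)=g.
\end{align*}
The main obstacle is well-definedness, and this is precisely where the real-preserving property enters. If $f_1,f_2\in V_n$ both satisfy $\Re(f_i)=g$, then $f_1-f_2$ is purely imaginary-valued, so by Remark~\ref{rem:real-preserving} its image $L(f_1-f_2)$ is purely imaginary, i.e.\ $\Re(Lf_1)=\Re(Lf_2)$. Hence $\tilde{L}$ is well defined on $\Re(V_n)$.

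Finally, $\R$-linearity of $\tilde{L}$ follows from the $\C$-linearity of $L$ and the $\R$-linearity of $\Re$: for $g_1,g_2\in\Re(V_n)$ with pre-images $f_1,f_2\in V_n$ and $\alpha,\beta\in\R$, the function $\alpha f_1+\beta f_2$ lies in $V_n$ with $\Re(\alpha f_1+\beta f_2)=\alpha g_1+\beta g_2$, so $\tilde{L}(\alpha g_1+\beta g_2)=\Re(L(\alpha f_1+\beta f_2))=\alpha\tilde{L}(g_1)+\beta\tilde{L}(g_2)$. The identity~\eqref{eqref:ass_real-functional} holds by the very definition of $\tilde{L}$, completing the proof.
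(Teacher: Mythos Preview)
Your proof is correct and follows essentially the same approach as the paper. The only cosmetic difference is that the paper phrases the ``only if'' direction via the kernel inclusion $\ker(\Re)\subseteq\ker(\Re\circ L)$ (which abstractly guarantees a factorization), whereas you carry out the equivalent explicit construction of $\tilde{L}$ and verify well-definedness directly; the ``if'' directions are virtually identical.
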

\begin{proof}
The imaginary-valued functions in $V_n$ constitute the kernel $\ker(\Re)$ of the $\R$-linear map $\Re:V_n\to \Re(V_n)$.
Hence, if $L:V_n\to\K$ is real-preserving,
$Lf\in\ii\R$ for each $f\in\ker(\Re)$. We conclude $\Re(Lf)=0$ and thus
\[
\ker\big(\Re\big) \subseteq \ker\big(\Re\circ L\big) \,.
\]
There is hence a uniquely determined $\R$-linear functional
\[
\tilde{L}: \Re(V_n) \to \R
\]
such that \eqref{eqref:ass_real-functional} holds true. 
Conversely, if~\eqref{eqref:ass_real-functional} holds, we deduce
for real-valued $f\in V_n$
\begin{gather*}
Lf = \Re(Lf) - \ii \Re(\ii Lf) =
\tilde{L}\Re(f) - \ii \tilde{L} \Re(\ii f) = \tilde{L}f \in \R \,. \qedhere
\end{gather*}
\end{proof}

Recall that $\Re(V_n)=\Im(V_n)$, where $\Im$ is defined as in~\eqref{eqdef:Im-Op}. 
Hence $\tilde{L}$ also operates on $\Im(V_n)$ and due to~\eqref{eqref:ass_real-functional} it holds
\begin{align}\label{eqref:ass_imaginary-functional}
\Im \circ L = \tilde{L} \circ\Im 
\end{align}
as the following calculation
shows,
\begin{gather*}
\Im(L f) = - \Re(L(\ii f)) = 
- \tilde{L}\Re(\ii f)=
\tilde{L}\Im(f) \,. 
\end{gather*}

The main result on real-weighted discretization is the following theorem.

\begin{theorem}\label{thm:real_functionals}
Let $V_n:=\lin_{\K}\{ \varphi_1,\ldots,\varphi_n\}$ and $L:V_n\to\K$ be a $\K$-linear functional. Then $L$ is discretizable with real weights if and only if
it is real-preserving.
\end{theorem}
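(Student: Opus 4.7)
The forward direction is immediate: if $Lf=\sum_{j=1}^N\lambda_jf(x_j)$ with $\lambda_j\in\R$, then for any real-valued $f\in V_n$ all values $f(x_j)$ are real, hence $Lf\in\R$; thus $L$ is real-preserving.

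For the backward direction, the plan is to reduce the problem to the real case on $\Re(V_n)$ via Lemma~\ref{lem:aux_functional} and then invoke the same dimension-based subsampling idea used to prove Theorem~\ref{thm:quad1}. Assume $L$ is real-preserving and let $\tilde L\colon\Re(V_n)\to\R$ be the $\R$-linear functional given by Lemma~\ref{lem:aux_functional}, satisfying $\Re\circ L=\tilde L\circ\Re$ and (as shown in the paragraph after that lemma) $\Im\circ L=\tilde L\circ\Im$. Pick an $\R$-basis $\psi_1,\ldots,\psi_N$ of $\Re(V_n)$ consisting of proper real-valued functions, where $N=\effdim_\R V_n$.

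Now apply Lemma~\ref{lem:column_rank} (with $\K=\R$) to $\psi_1,\ldots,\psi_N$: since these are $\R$-linearly independent, there exist points $x_1,\ldots,x_N\in\Omega$ such that the matrix $\bigl(\psi_i(x_j)\bigr)_{i,j=1}^N\in\R^{N\times N}$ is regular. The linear system
\begin{equation*}
\begin{pmatrix}\psi_1(x_1)&\cdots&\psi_1(x_N)\\ \vdots&\ddots&\vdots\\ \psi_N(x_1)&\cdots&\psi_N(x_N)\end{pmatrix}\begin{pmatrix}\lambda_1\\ \vdots\\ \lambda_N\end{pmatrix}=\begin{pmatrix}\tilde L\psi_1\\ \vdots\\ \tilde L\psi_N\end{pmatrix}
\end{equation*}
is a real system with a unique real solution $\lambda_1,\ldots,\lambda_N\in\R$. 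By $\R$-linearity, $\tilde Lg=\sum_{j=1}^N\lambda_jg(x_j)$ for every $g\in\Re(V_n)$.

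It remains to lift this discretization back to $V_n$. For arbitrary $f\in V_n$, decompose $f=\Re(f)+\ii\,\Im(f)$ with $\Re(f),\Im(f)\in\Re(V_n)$ and compute
\begin{equation*}
Lf=\Re(Lf)+\ii\,\Im(Lf)=\tilde L\Re(f)+\ii\,\tilde L\Im(f)=\sum_{j=1}^N\lambda_j\bigl(\Re(f(x_j))+\ii\,\Im(f(x_j))\bigr)=\sum_{j=1}^N\lambda_jf(x_j),
\end{equation*}
establishing the desired real-weighted discretization with $N$ nodes. The only subtlety, and the main thing to get right, is the simultaneous use of both intertwining relations $\Re\circ L=\tilde L\circ\Re$ and $\Im\circ L=\tilde L\circ\Im$; everything else is a direct analogue of the square-system argument in the proof of Theorem~\ref{thm:quad1}.
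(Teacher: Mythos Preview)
Your proof is correct and follows essentially the same approach as the paper: reduce to the real functional $\tilde L$ on $\Re(V_n)$ via Lemma~\ref{lem:aux_functional}, discretize $\tilde L$ with real weights, and lift back using both intertwining relations $\Re\circ L=\tilde L\circ\Re$ and $\Im\circ L=\tilde L\circ\Im$. The only difference is that you spell out the discretization of $\tilde L$ explicitly via Lemma~\ref{lem:column_rank} and a square linear system, whereas the paper simply asserts it as a known fact about $\R$-linear functionals.
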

\begin{proof}
The necessity is clear. For sufficiency, let $\tilde{L}$ be the functional associated to $L$ according to
Lemma~\ref{lem:aux_functional}.
Being an $\R$-linear functional, $\tilde{L}$ has a discretization with at most, recall Definition~\ref{def:eff_dim}, $N=\effdim_{\R} V_n = \dim_{\R}\Re (V_n)$ nodes and real weights. In view of~\eqref{eqref:ass_real-functional} and~\eqref{eqref:ass_imaginary-functional}, we get
\begin{gather*}
\Re(L f) = \tilde{L}\Re(f) = \sum_{j=1}^{N} \lambda_j \Re(f)(x_j) \quad\text{and}\quad
\Im(L f) = \tilde{L}\Im(f) = \sum_{j=1}^{N} \lambda_j \Im(f)(x_j) \,. \qedhere
\end{gather*}
\end{proof}

\begin{remark}
Looking into the proof of Theorem~\ref{thm:real_functionals}, we can record that for the real-weighted discretization of $L$ always $N=\effdim_{\R} V_n$ nodes and weights are sufficient.
\end{remark}

We next aim for positive discretizability.

\begin{definition}
A $\K$-linear functional $L:V_n\to\K$ is called \emph{positively discretizable} if and only if there exists a node set $P=\{x_1,\ldots,x_N \} \subseteq \Omega$ and a non-negative weight set $W_{\ge0}=\{\lambda_1,\ldots,\lambda_N\}$ such that
\begin{align}\label{eq:pos_discretization}
	L f = \sum_{j=1}^{N} \lambda_j f(x_j) \quad\text{for all } f\in V_n\,.
\end{align}	
\end{definition}

Recall that, in case $\K=\R$, an $\R$-linear functional $L:V_n\to\R$  is called \emph{positive} if for every function $f\in V_n$
\[
f\ge 0 \quad\Rightarrow\quad Lf\ge 0 \,.
\]
Positive functionals $L$ on a $1$-dimensional space $V_1$ always allow positive discretization. Either $L=0$, which is trivial, or there is $\varphi\in V_1$ with $L\varphi>0$, which implies the existence of an $x\in\Omega$ with $\varphi(x)>0$ since $\varphi\le 0$  would lead to the contradiction $L\varphi<0$. This $x$ can be used as discretization node with associated weight
\[
\lambda := \frac{L\varphi}{\varphi(x)} >0 \,.
\]
In a $2$-dimensional space the situation is already different.

\begin{example}\label{ex:pos_discr}
	Let $\Omega:=\R_{>0}$ and $V_2:=\lin_{\R}\{\varphi_1,\varphi_2\}$ with $\varphi_1(x):= 1/x$ and $\varphi_2(x):=1$. Consider the linear functional $L:V_2\to\R$ given by
	\begin{align*}
	L\varphi_1:= 1 \quad,\quad L\varphi_2:=0 \,.
	\end{align*}
	It is positive, i.e., $Lf\ge 0$ whenever $f\ge0$, but not positively discretizable.
\end{example}

In dimensions $n\ge2$, the class of positively discretizable functionals may hence be strictly smaller than the class of positive functionals. The situation is hence more involved than for the discretization of functionals by real weights, where Theorem~\ref{thm:real_functionals} gave a simple characterization. 

Before we proceed to our main theorem, Theorem~\ref{thm:pos_functionals}, let us consider another special situation, namely function spaces $V_n$ on a finite domain $\Omega=\{x_1,\ldots,x_M\}$.
Here one can in fact prove that every positive linear functional on $V_n$ is positively discretizable.

\begin{proposition}\label{prop:funct_on_finite_domain}
Any positive linear functional $L:V_n\to \R$ defined on a linear subspace $V_n$ of $\R^{\Omega}$, where $\Omega$ is finite, is positively discretizable.
\end{proposition}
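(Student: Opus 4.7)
The plan is to recognize positive discretizability of $L$ as membership in a convex cone in the dual space $V_n^*$, and then secure this membership via the bipolar theorem together with positivity of $L$. For each $i=1,\ldots,M$, let $\mathrm{ev}_{x_i}\in V_n^*$ denote the evaluation functional $\mathrm{ev}_{x_i}(f):=f(x_i)$, and set
\[
C := \cone\{\mathrm{ev}_{x_1},\ldots,\mathrm{ev}_{x_M}\} \subseteq V_n^*.
\]
A representation $Lf = \sum_i \lambda_i f(x_i)$ with $\lambda_i \ge 0$ is, tautologically, the same as the statement $L \in C$, so the task reduces to proving $L\in C$.

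The first step is to observe that $C$ is closed: being a finitely generated cone in the finite-dimensional space $V_n^*$, it is polyhedral, hence closed. Next, I identify the dual cone. Using the natural identification $(V_n^*)^* \cong V_n$, the dual cone $C^*$ consists of those $f \in V_n$ for which $\mathrm{ev}_{x_i}(f)=f(x_i) \ge 0$ for all $i$. Since $\Omega=\{x_1,\ldots,x_M\}$, this is precisely the set $\{f \in V_n : f \ge 0\}$ of non-negative functions in $V_n$.

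Positivity of $L$ therefore translates exactly into the statement $Lf\ge 0$ for every $f\in C^*$, i.e., $L\in C^{**}$. The bipolar theorem, applied to the closed convex cone $C$, gives $C^{**}=C$, and hence $L\in C$. Reading off the conic decomposition produces non-negative weights $\lambda_1,\ldots,\lambda_M\ge 0$ realizing the desired positive discretization~\eqref{eq:pos_discretization}, after possibly discarding indices with $\lambda_i=0$.

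The only technical point worth checking is the closedness of $C$, which is a standard fact of finite-dimensional convex geometry (finitely generated cones are polyhedral). Conceptually, the reason this proposition is simpler than the forthcoming Theorems~\ref{thm:pos_functionals} and~\ref{thm:pos_functionals_2} is that the finiteness of $\Omega$ forces $C^*$ to coincide with the non-negative functions in $V_n$, so plain positivity of $L$ already does all the work and no strict $S$-positivity refinement is needed.
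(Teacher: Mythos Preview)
Your proof is correct and follows essentially the same approach as the paper: both arguments show that $L$ lies in the bidual of the cone generated by the evaluation functionals and then invoke closedness of finitely generated cones to conclude. The only cosmetic difference is that the paper fixes a basis of $V_n$ and works with coordinate vectors $\bvarphi(x)\in\R^n$ and $\bvarphi_L\in\R^n$, whereas you argue intrinsically in $V_n^\ast$.
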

\begin{proof}
Let $\varphi_1,\ldots,\varphi_n:\Omega\to\R$ be a basis of $V_n$. 
In case $n=1$ we already know, by the argument given before Example~\ref{ex:pos_discr}, that every positive functional on $V_1$ is positively discretizable. 
To prove the case $n\in\{2,\ldots,|\Omega|\}$, let us define $\bvarphi_L := (L(\varphi_1),\ldots,L(\varphi_{n}))^\top \in \R^{n}$ and the vectors
$\bvarphi(x):=(\varphi_1(x),\ldots,\varphi_{n}(x))^\top\in\R^{n}$ for each $x\in\Omega$. 
When we can show $\bvarphi_L \in \Phi$, where
$\Phi$ is the cone
\begin{align*}
\Phi := \cone \Big\{ \bvarphi(x)  : x\in \Omega \Big\} \subseteq \R^{n} \,,
\end{align*}
we are finished. The dual $P:=\Phi^\ast$ of $\Phi$ is, by definition, given by
\begin{align*}
P= \Big\{ \bc:=(c_1,\ldots,c_{n})^\top \in\R^{n} : \forall_{x\in \Omega} \langle \bvarphi(x), \bc\rangle \ge 0  \Big\} \,.
\end{align*}
This cone contains the coefficient vectors of positive functions in $V_n$ and therefore, since $L$ is positive, 
\begin{align*}
\langle \bvarphi_L , \bc \rangle \ge 0  
\quad\text{ for all $\bc\in P$}\,.
\end{align*}
This, in turn, shows $\bvarphi_L \in P^{\ast}$ and
hence there exists an exact positive discretization of $L$ due to $P^{\ast}=(\Phi^\ast)^\ast=\overline{\Phi}$ by the bidual theorem, see e.g.~\cite[pp.~121–125]{Rockafellar1997} and~\cite[pp.~51-53]{BoydVandenberghe2004}, and $\overline{\Phi}=\Phi$
since $\Omega$ is finite. 
\end{proof}

As another step towards Theorem~\ref{thm:pos_functionals},
we next generalize the concept of positivity of a functional to subsets $S\subseteq\Omega$.
 
\begin{definition}\label{def:D-positive}
Let $\emptyset\subset S\subseteq\Omega$. We call a function $f:\Omega\to\R$  \emph{$S$-positive} if and only if $f|_{S}\ge 0$.
An $\R$-linear functional $L:V\to\R$ is called \emph{$S$-positive} if and only if $f|_{S}\ge 0$ implies $Lf\ge 0$.
\end{definition}

The theorem below now gives a precise condition, when $\R$-linear functionals on finite-dimensional real-valued function spaces are positively discretizable.

\begin{theorem}\label{thm:pos_functionals}
For a non-trivial linear functional $L:V_n\to \R$ the following assertions are equivalent:
\begin{enumerate}[label=\normalfont(\roman*)]
\item 
$L$ is positively discretizable.
\item
There exists a subset $S\neq\emptyset$ of $\Omega$ such that
\begin{enumerate}[label=\normalfont(\alph*)]
\item $L$ is $S$-positive.
\item The $S$-positive functions in $\ker L$ vanish on $S$.
\end{enumerate}
\end{enumerate}
If $L$ is positively discretizable the maximal number of discretizing nodes needed is $n=\dim_{\R}V_n$.
\end{theorem}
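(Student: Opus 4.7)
The plan is to recast positive discretizability of $L$ as a cone-membership question in $\R^n$ and to attack it by combining duality with the supporting hyperplane theorem, exactly as in the proof of Proposition~\ref{prop:funct_on_finite_domain}; condition (b) will be the precise ingredient that replaces the automatic closedness enjoyed in the finite-domain case. Fix a basis $\varphi_1,\dots,\varphi_n$ of $V_n$ and associate to $L$ the coefficient vector $\bvarphi_L:=(L\varphi_1,\dots,L\varphi_n)^\top\in\R^n$ together with the evaluation vectors $\bvarphi(x):=(\varphi_1(x),\dots,\varphi_n(x))^\top$. For any $T\subseteq\Omega$, existence of a positive discretization of $L$ with nodes in $T$ is equivalent to $\bvarphi_L\in\Phi_T:=\cone\{\bvarphi(x):x\in T\}$, since any non-negative combination translates back into an identity of the form~\eqref{eq:pos_discretization}.

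The direction (i)$\Rightarrow$(ii) is immediate: from a positive discretization $Lf=\sum_{j=1}^N\lambda_j f(x_j)$ set $S:=\{x_j:\lambda_j>0\}$. Then $f|_S\ge 0$ forces $Lf\ge 0$, giving (a); and if additionally $Lf=0$, every term $\lambda_j f(x_j)$ must vanish, so $f|_S=0$, giving (b).

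For the substantial direction (ii)$\Rightarrow$(i), $S$-positivity says exactly that every $\bc$ in the polar cone $\Phi_S^*=\{\bc\in\R^n:\langle\bc,\bvarphi(x)\rangle\ge0\ \forall x\in S\}$ satisfies $\langle\bc,\bvarphi_L\rangle=L(\sum_i c_i\varphi_i)\ge 0$. The bidual theorem therefore yields $\bvarphi_L\in\overline{\Phi_S}$, and in particular $\bvarphi_L\in W:=\lin_\R\Phi_S$. The main obstacle is to upgrade $\overline{\Phi_S}$ to $\Phi_S$, because for infinite $S$ the cone $\Phi_S$ need not be closed; this is where (b) is used. Suppose for contradiction that $\bvarphi_L$ lies on the relative boundary of $\overline{\Phi_S}$ in $W$. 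The supporting hyperplane theorem (Theorem~\ref{thm:support_hyperplane}) then furnishes a nonzero $\bc\in W$ with $\langle\bc,\bvarphi(x)\rangle\ge 0$ for every $x\in S$ and $\langle\bc,\bvarphi_L\rangle=0$. Setting $f_\bc:=\sum_i c_i\varphi_i$, we see that $f_\bc$ is $S$-positive and lies in $\ker L$, so (b) forces $f_\bc|_S=0$; equivalently, $\bc\perp\bvarphi(x)$ for every $x\in S$, i.e.\ $\bc\perp W$. Combined with $\bc\in W$ this yields $\bc=0$, a contradiction. Hence $\bvarphi_L$ belongs to the relative interior of $\overline{\Phi_S}$, which coincides with the relative interior of $\Phi_S$, and in particular $\bvarphi_L\in\Phi_S$.

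Finally, applying Carathéodory's theorem (Theorem~\ref{CarClassic}) to $\bvarphi_L\in\cone\{\bvarphi(x):x\in S\}\subseteq\R^n$ produces a representation as a non-negative combination of at most $\dim_\R W\le n$ of the vectors $\bvarphi(x)$, which translates back into a positive discretization of $L$ using at most $n=\dim_\R V_n$ nodes, as claimed.
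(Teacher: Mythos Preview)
Your proof is correct and, in the implication (ii)$\Rightarrow$(i), takes a genuinely different route from the paper. The paper argues by induction on $n$: after reaching $\bvarphi_L\in\overline{\Phi_S}$ via biduality, it assumes $\bvarphi_L\in\partial\Phi_S$, picks a supporting normal $\bn$, observes that the associated function $g=\langle\bvarphi(\cdot),\bn\rangle$ vanishes on $S$ by condition (b), and then reduces to an $n$-dimensional subproblem (with nodes still taken from $S$) handled by the induction hypothesis. You instead work directly inside $W=\lin_\R\Phi_S$, choose the supporting normal $\bc$ to lie in $W$, and use condition (b) to force $\bc\perp W$; together with $\bc\in W$ this kills $\bc$ outright, so $\bvarphi_L$ must lie in the relative interior of $\overline{\Phi_S}$, which equals $\operatorname{ri}\Phi_S\subseteq\Phi_S$ by the standard fact $\operatorname{ri}(\overline{C})=\operatorname{ri}(C)$ for convex $C$. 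This avoids the induction entirely and is arguably more streamlined; the price is that you rely on two pieces of convex-analysis machinery not made explicit in the paper (the relative supporting hyperplane theorem inside $W$, and the identity $\operatorname{ri}(\overline{C})=\operatorname{ri}(C)$), whereas the paper's inductive argument stays within the elementary toolkit it has already set up. Both approaches use (b) in the same essential way, namely to show that any supporting normal is orthogonal to every $\bvarphi(x)$ with $x\in S$; they differ only in how this orthogonality is exploited.
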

\begin{proof}
(i)$\Rightarrow$(ii):\,
Choose points $\cP=\{x_1,\ldots,x_N\}$
and weights $\{\lambda_1,\ldots,\lambda_N\}\subset\R_{\ge0}$ such that 
\begin{align}\label{eq:aux10}
Lf=\sum_{j=1}^{N} \lambda_j f(x_j) \quad\text{ for all }f\in V_n \,.
\end{align}
The set $S:=\{ x_j \in\cP : \lambda_j> 0 ,\, j=1,\ldots,N \}$ is then non-empty since $L\neq 0$.  
Further, $L$ is clearly $S$-positive and $S$-positive functions in $\ker L$ vanish on $S$.
A Carathéodory subsampling step, see Lemma~\ref{CarSub}, allows to reduce the number of nodes in~\eqref{eq:aux10} to at most $n=\dim_{\R}V_n$.

(ii)$\Rightarrow$(i):\, This direction is proved by induction on $n$. Compare to the proof of Lemma~\ref{lem:intcone_eq_cone}.

$n=1$:\, Recall the argument, given before Example~\ref{ex:pos_discr}, that every positive functional on $V_1$ is positively discretizable. 
In case of a $S$-positive functional, this argument can be refined to show that a discretization with nodes from $S$ is possible. This is due to the fact that, under the assumption of $S$-positivity of $L$, a function $\varphi\in V_1$ with $L\varphi>0$ cannot have the property $\varphi|_S\le 0$. Hence there exists an $x\in S$ with $\varphi(x)>0$, which together with the positive weight $\lambda := L\varphi/\varphi(x)$ yields positive discretization.

$n\rightsquigarrow n+1$:\,
Let $\varphi_1,\ldots,\varphi_{n+1}$ be a basis of $V_{n+1}$ and put
$\bvarphi(x):=(\varphi_1(x),\ldots,\varphi_{n+1}(x))^\top\in\R^{n+1}$ for each $x\in\Omega$. 
Further, define 
$\bvarphi_L := (L(\varphi_1),\ldots,L(\varphi_{n+1}))^\top \in \R^{n+1}$.
For $S\subseteq\Omega$
define the cone
\begin{align*}
\Phi_S := \cone \Big\{ \bvarphi(x) : x\in S \Big\} \subseteq \R^{n+1}
\end{align*}
and its dual
\begin{align*}
P_S := \Phi^\ast_S = \Big\{ \bc:=(c_1,\ldots,c_{n+1})^\top \in\R^{n+1} : \forall_{x\in S} \langle \bvarphi(x), \bc\rangle \ge 0  \Big\} \,.
\end{align*}
It then holds $\bvarphi_L \in P_S^{\ast}$ since $L$ is assumed to be $S$-positive by condition (a), whence
\begin{align*}
\langle \bvarphi_L , \bc \rangle \ge 0  
\quad\text{ for all $\bc\in P_S$}\,.
\end{align*}
As in the proof of Proposition~\ref{prop:funct_on_finite_domain},
we now use $(\Phi^\ast_S)^\ast = \overline{\Phi}_S$.
This latter equality is the statement of the bidual theorem which can be looked up e.g.~in~\cite[pp.~121–125]{Rockafellar1997} or~\cite[pp.~51-53]{BoydVandenberghe2004}. It follows
$\bvarphi_L \in \overline{\Phi}_S$.
If $\bvarphi_L \in \Phi_S$ there exists an exact discretization of $L$ with non-negative weights and we are finished.
Otherwise, we have the case
\begin{align*}
\bvarphi_L \in \overline{\Phi}_S\backslash \Phi_S \subseteq \partial \Phi_S \,.
\end{align*}
Then there is a unit vector $\bn\in\R^{n+1}$, which points into $\Phi_S$ and defines
a supporting hyperplane $\cE\subset\R^{n+1}$ with $\bvarphi_L\in\cE$
and $\cE\cap \operatorname{int} \Phi_S=\emptyset$, see Def.~\ref{def:support_hyperplane} and Thm.~\ref{thm:support_hyperplane}.
The function
\[
g:\Omega\to\R \quad,\quad x\mapsto \langle \bvarphi(x), \bn \rangle 
\]
is a non-trivial element of $V_{n+1}$, due to $\bn\neq 0$, and satisfies
$L(g)=\langle \bvarphi_L, \bn \rangle=0$.
Further, since $\bn$ points into $\Phi_S$, $g$ is $S$-positive.
Due to condition (b) it hence follows
\begin{align}\label{eq:D_vanish}
g|_S = 0 \,.
\end{align}
We next choose functions $\psi_1,\ldots,\psi_n\in V_{n+1}$ such that 
$V_{n+1}= \lin_{\R}\{g,\psi_1,\ldots,\psi_n \}$.
By induction hypothesis there exist $x_1,\ldots,x_n$ in $S$, see the initial induction step for $n=1$, such that 
\[
L h = \sum_{j=1}^{n} \lambda_j h(x_j) \quad\text{ with non-negative weights $\lambda_1,\ldots,\lambda_n\ge 0$}
\]
for all $h\in V_{n}:= \lin_{\R}\{\psi_1,\ldots,\psi_n \}$.
Now, let $f= c g + h$ with $h\in V_n$ and $c\in\R$. Then, in view of $g(x_1)=\ldots = g(x_n)=0$ and due to~\eqref{eq:D_vanish},
\begin{gather*}
L(f) = c L(g) + L(h) = \sum_{j=1}^{n} \lambda_j h(x_j) 
= \sum_{j=1}^{n} \lambda_j (cg+h)(x_j) = \sum_{j=1}^{n} \lambda_j f(x_j) \qedhere
\end{gather*}
\end{proof}

\begin{remark}
If $L=0$, which is excluded in Theorem~\ref{thm:pos_functionals}, item (i) is  always fulfilled. However, there are cases where no $S\neq\emptyset$ with the stated properties exists. Further, $V_0:=\{0\}$ is possible as function space if $L=0$ with $\dim_{\R} V_0=0$.
\end{remark}

Note that condition~(ii) in Theorem~\ref{thm:pos_functionals} can also be stated as follows: There exists $\emptyset\subset S\subseteq\Omega$ such that $L$ is strictly $S$-positive, i.e., such that $L$ is $S$-positive and~\eqref{def:strict_D-positive} holds true.
A direct consequence of Theorem~\ref{thm:pos_functionals} is hence the following corollary. Here Theorem~\ref{thm:pos_functionals} can be applied with $S=\Omega$. 

\begin{corollary}
Any strictly positive linear functional $L:V_n\to \R$, in the sense that $f\ge 0$ and $f\neq 0$ implies $Lf>0$, is positively discretizable.
\end{corollary}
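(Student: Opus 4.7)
The plan is to apply Theorem~\ref{thm:pos_functionals} directly with the choice $S=\Omega$. The corollary's hypothesis, namely that $f\ge0$ together with $f\neq0$ implies $Lf>0$, is essentially tailor-made to provide conditions (a) and (b) of that theorem for this particular $S$, so the work reduces to matching definitions.

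First I would dispose of the trivial edge cases: if $V_n=\{0\}$ then $L=0$ and any point with weight $0$ gives a positive discretization, and if $L=0$ on a nonzero $V_n$ then strict positivity forces every $f\in V_n$ to admit no non-negative representative other than zero, so once again the empty sum (or an arbitrary point with weight $0$) discretizes $L$. Otherwise $L$ is non-trivial and Theorem~\ref{thm:pos_functionals} is directly applicable.

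Next I would verify the two conditions of item~(ii) of Theorem~\ref{thm:pos_functionals} for $S:=\Omega$. Condition~(a) asks that $f|_\Omega\ge0$ implies $Lf\ge0$; since $f|_\Omega\ge0$ is just $f\ge0$, this is immediate from the strict positivity of $L$ (with the case $f=0$ giving $Lf=0\ge0$). Condition~(b) asks that any $\Omega$-positive $f\in\ker L$ vanishes on $\Omega$; this is the contrapositive of strict positivity, because if $f\ge0$ and $f\neq0$ then strict positivity would force $Lf>0$, contradicting $f\in\ker L$.

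With both conditions verified, Theorem~\ref{thm:pos_functionals} produces a positive discretization of $L$ using at most $n=\dim_{\R}V_n$ nodes, which is exactly the conclusion. The argument is essentially a one-line reduction, and the only potential obstacle is bookkeeping around the degenerate cases mentioned above; there is no genuine analytic difficulty because the heavy lifting has already been done in the proof of Theorem~\ref{thm:pos_functionals}.
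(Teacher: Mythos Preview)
Your proposal is correct and matches the paper's own argument: the paper simply notes that Theorem~\ref{thm:pos_functionals} applies with $S=\Omega$, which is exactly what you do. Your handling of the degenerate cases $V_n=\{0\}$ and $L=0$ is a minor addition the paper does not spell out, but it is correct and harmless.
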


The following example gives an instance of a positive functional on a space of continuous functions $V_2$ on a compact domain $\Omega$ which is not positively discretizable.

\begin{example}\label{ex:pos_discr_2}
Let $\Omega:=[-1,1]$ and $V_2:=\lin_{\R}\{\varphi_1,\varphi_2\}$ with $\varphi_1(x):= \sgn(x)\sqrt{|x|}$ and $\varphi_2(x):=\max\{0,x\}$. The functional $L:V_2\to\R$ is given by
\begin{align*}
L\varphi_1:= 1 \quad,\quad L\varphi_2:=0 \,.
\end{align*}
It is positive, yet not positively discretizable. 
\end{example}
\begin{proof}
Let us first show that $L$ is positive. The only way to obtain positivity for a function $h:=\alpha\varphi_1 + \beta\varphi_2\in V_2$ is by choosing $\alpha\le 0$ and $\beta\ge0$. The other cases can be ruled out quickly by looking at the sign of $h$ on $[-1,0]$ and $[0,1]$. But $h\ge 0$ implies for every $x\in[0,1]$
\begin{align*}
\alpha \sqrt{x} + \beta x \ge 0 \quad\Leftrightarrow\quad  \beta x \ge -\alpha\sqrt{x} \,,
\end{align*}
which under the assumption $\beta\ge0$ can only be true if $\alpha\ge 0$. This shows $\alpha=0$ and thus that $h=\beta\varphi_2$ with $\beta\ge0$ are the only positive functions in $V_2$ and those are mapped to $0$. Hence, $L$ is positive.

Next, we apply Theorem~\ref{thm:pos_functionals} to show that no positive discretization of $L$ exists.
Assume that $S\subseteq[-1,1]$ satisfies condition (b) of (ii) in Theorem~\ref{thm:pos_functionals}. Since
$\ker(L)=\lin_{\R}\{\varphi_2\}$ and $\varphi_2|_{(0,1]}>0$, this implies $S\subseteq[-1,0]$. But for such $S$ the functional $L$ is not $S$-positive since $\varphi_1|_{S}\le0$
and $L\varphi_1>0$. 
\end{proof}

The crucial observation for an extension of Theorem~\ref{thm:pos_functionals} to $\K$-linear functionals is the following lemma.

\begin{lemma}\label{lem:pos_functionals_2}
A $\K$-linear functional $L:V_n\to \K$ is positively discretizable if and only if it is real-preserving and the associated $\R$-linear functional $\tilde{L}:\Re(V_n)\to\R$, in the sense of Lemma~\ref{lem:aux_functional}, is positively discretizable. 
The discretization nodes and weights for $\tilde{L}$ and $L$ can be chosen equal.
\end{lemma}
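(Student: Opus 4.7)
The proof will be a two-sided implication, and my plan is to handle each direction in a compact way by exploiting the identities $\Re\circ L=\tilde L\circ\Re$ from Lemma~\ref{lem:aux_functional} together with its imaginary counterpart~\eqref{eqref:ass_imaginary-functional}. The key observation is that a formula $Lf=\sum_j\lambda_j f(x_j)$ with real weights automatically respects splitting into real and imaginary parts, so the same nodes and weights transfer between $L$ and $\tilde L$ in either direction.

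For the implication ``$L$ positively discretizable $\Rightarrow$ real-preserving and $\tilde L$ positively discretizable'', I would first note that if $Lf=\sum_{j=1}^N\lambda_j f(x_j)$ with $\lambda_j\ge 0$, then plugging in a real-valued $f\in V_n$ gives $Lf\in\R$, so $L$ is real-preserving and Lemma~\ref{lem:aux_functional} produces $\tilde L$. Then for arbitrary $f\in V_n$, taking real parts on both sides of the discretization yields
\begin{align*}
\tilde L\bigl(\Re(f)\bigr)=\Re(Lf)=\sum_{j=1}^{N}\lambda_j\Re(f)(x_j).
\end{align*}
Since every element of $\Re(V_n)$ has the form $\Re(f)$ for some $f\in V_n$, this is exactly the desired positive discretization of $\tilde L$ with the same nodes and weights.

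For the converse, I would start from a positive discretization $\tilde L g=\sum_{j=1}^N\lambda_j g(x_j)$ for all $g\in\Re(V_n)$, with $\lambda_j\ge 0$. Applied to $g=\Re(f)$ and $g=\Im(f)$ for $f\in V_n$, combined with identities~\eqref{eqref:ass_real-functional} and~\eqref{eqref:ass_imaginary-functional}, I obtain
\begin{align*}
\Re(Lf)=\sum_{j=1}^{N}\lambda_j\Re(f(x_j)),\qquad
\Im(Lf)=\sum_{j=1}^{N}\lambda_j\Im(f(x_j)),
\end{align*}
and adding $\Re(Lf)+\ii\,\Im(Lf)$ yields $Lf=\sum_{j=1}^N\lambda_j f(x_j)$, proving positive discretizability of $L$ with the same data.

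There is essentially no obstacle here; the lemma is a bookkeeping result that records how the real/imaginary decomposition interacts with positive discretization. The only point requiring a little care is making sure that $\tilde L$ acts consistently on both $\Re(V_n)$ and $\Im(V_n)$, which is guaranteed by the equality $\Re(V_n)=\Im(V_n)$ and the identity~\eqref{eqref:ass_imaginary-functional} already established.
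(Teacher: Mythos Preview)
Your proposal is correct and follows essentially the same approach as the paper: both directions are handled by taking real parts of the discretization formula (forward) and recombining the real and imaginary parts via \eqref{eqref:ass_real-functional} and \eqref{eqref:ass_imaginary-functional} (converse), with the same nodes and weights carried over in each case.
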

\begin{proof}
If $L$ is positively discretizable, there are nodes $\cP=\{x_1,\ldots,x_N\}$
and $\{\lambda_1,\ldots,\lambda_N\}\subset\R_{\ge0}$ such that~\eqref{eq:pos_discretization} holds true.
Clearly, $L$ is then real-preserving and, applying $\Re$ to~\eqref{eq:pos_discretization}, one obtains
\[
\tilde{L}\Re(f) = \Re(Lf) =\sum_{j=1}^{N} \lambda_j \Re(f)(x_j) \quad\text{ for all }f\in V_n \,.
\]
This proves the positive discretizability of $\tilde{L}$. Vice versa, assuming that $L$ is real-preserving and $\tilde{L}$ positively discretizable,
we argue with~\eqref{eqref:ass_real-functional} and~\eqref{eqref:ass_imaginary-functional} for $f\in V_n$
\begin{gather*}
L f = \Re(Lf) + \ii\Im(Lf) = \tilde{L} \Re(f) +  \ii\tilde{L}\Im(f) = \sum_{j=1}^{N} \lambda_j (\Re(f) +\ii\Im(f))(x_j) = \sum_{j=1}^{N} \lambda_j f(x_j) \,. \qedhere
\end{gather*}
\end{proof}

Taking Lemma~\ref{lem:pos_functionals_2} into account we can finally extend Theorem~\ref{thm:real_functionals} to $\K$-linear functionals. Let us start with some notation.
For the function space $V_n\subseteq\K^\Omega$, where $\Omega\neq\emptyset$ is some generic set, and a non-empty subset $S\subseteq\Omega$, we define the $\K$-linear restriction map
\begin{align*}
\Lambda_{S}&: V_n \to \K^S \,,\quad f \mapsto f|_S \,. 
\end{align*}  
Further, for an $\R$-linear map $R:V\to \R^I$ from a real vector space $V$ to $\R^I$, where $I$ is an arbitrary index set, we will use the notation $\kerplus(R)$ for the cone
\begin{align*}
   \kerplus(R) := \{ f\in V : Rf \ge 0  \} \,.
\end{align*}
This cone extends the respective kernel $\ker(R) = \{ f\in V : Rf = 0  \}$ and is relevant for us for the mappings $R=\Lambda_S$ and $R=L$.

\begin{lemma}\label{lem3}
	The condition $\kerplus\big(\Re\circ \Lambda_S\big)\subseteq\kerplus\big(\Re\circ L\big)$ 
    implies  
    $\ker\big(\Re\circ \Lambda_S\big)\subseteq\ker\big(\Re\circ L\big) $.
\end{lemma}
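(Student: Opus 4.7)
The plan is to exploit that both $\Re\circ\Lambda_S$ and $\Re\circ L$ are $\R$-linear maps into ordered spaces, so their kernels can be described as the intersection of $\kerplus$ with its negative: for such an $\R$-linear $R$, one has $Rf=0$ if and only if both $Rf\ge 0$ and $R(-f)=-Rf\ge 0$. This gives the identity
\begin{align*}
\ker(R)=\kerplus(R)\cap\bigl(-\kerplus(R)\bigr)\,,
\end{align*}
applied to $R=\Re\circ\Lambda_S$ and to $R=\Re\circ L$.

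First I would take any $f\in\ker(\Re\circ\Lambda_S)$, so that $\Re(f)|_S=0$. Then both $\Re(f)|_S\ge 0$ and $\Re(-f)|_S\ge 0$, meaning $f\in\kerplus(\Re\circ\Lambda_S)$ and $-f\in\kerplus(\Re\circ\Lambda_S)$. Applying the hypothesised inclusion $\kerplus(\Re\circ\Lambda_S)\subseteq\kerplus(\Re\circ L)$ to both elements gives $\Re(Lf)\ge 0$ and $\Re(L(-f))=-\Re(Lf)\ge 0$, forcing $\Re(Lf)=0$, i.e.\ $f\in\ker(\Re\circ L)$.

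This is essentially a one-line observation — a cone contained in another cone automatically maps the associated linearity space (the cone's lineality space) into the target's lineality space. There is no real obstacle; the only thing worth being careful about is not to confuse $\kerplus$, which is a convex cone and not a vector subspace, with an actual kernel. Since the argument makes no use of the specific structure of $V_n$, $L$, or $\Lambda_S$ beyond $\R$-linearity, the lemma is immediate.
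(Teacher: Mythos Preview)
Your proof is correct and follows essentially the same approach as the paper: take $f\in\ker(\Re\circ\Lambda_S)$, note that then both $f$ and $-f$ lie in $\kerplus(\Re\circ\Lambda_S)$, apply the assumed inclusion, and conclude $\Re(Lf)\ge 0$ and $-\Re(Lf)\ge 0$, hence $\Re(Lf)=0$. Your additional framing via the lineality-space identity $\ker(R)=\kerplus(R)\cap(-\kerplus(R))$ is a nice way to phrase it, but the underlying argument is identical to the paper's.
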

\begin{proof}
	Assume $f\in \ker(\Re\circ \Lambda_S)$. Then also $-f\in \ker(\Re\circ \Lambda_S)$ and hence $\{f,-f \}  \subseteq \ker(\Re\circ \Lambda_S) \subseteq	\kerplus(\Re\circ \Lambda_S)$. 
    If $\kerplus(\Re\circ \Lambda_S)\subseteq\kerplus(\Re\circ L)$ thus  $\{f,-f \} \subseteq \kerplus(\Re\circ L)$, which means  $\Re(Lf) \ge 0$ and $-\Re(Lf) \ge 0$ and implies $f\in \ker(\Re\circ L)$.
\end{proof}

Now we are ready to state the analogue of Theorem~\ref{thm:pos_functionals} for $\K$-linear functionals. Note that condition (ii) in Theorem~\ref{thm:pos_functionals} is equivalent to
\[
\kerplus\big( \Lambda_S\big)\subseteq\kerplus\big(L\big)
\quad\text{and}\quad
\kerplus\big(\Lambda_S\big)\cap \ker\big(L\big) \subseteq \ker\big(\Lambda_S\big) \,.
\]

\begin{theorem}\label{thm:pos_functionals_2}
For a non-trivial $\K$-linear functional $L:V_n\to \K$ the assertions below are equivalent:
\begin{enumerate}[label=\normalfont(\roman*)]
\item 
$L$ is positively discretizable.
\item
There exists a subset $S\neq\emptyset$ of $\Omega$ such that
\[
\kerplus\big(\Re\circ \Lambda_S\big)\subseteq\kerplus\big(\Re\circ L\big)
\quad\text{and}\quad
\kerplus\big(\Re\circ \Lambda_S\big)\cap \ker\big(\Re\circ L\big) \subseteq \ker\big(\Re\circ \Lambda_S\big) \,.
\]
\end{enumerate}
If $L$ is positively discretizable the maximal number of discretizing nodes needed is $N=\effdim_{\R}V_n$.
\end{theorem}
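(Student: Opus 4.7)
The plan is to reduce the $\K$-linear case to the $\R$-linear statement already established in Theorem~\ref{thm:pos_functionals}, using Lemma~\ref{lem:pos_functionals_2} as the bridge. In essence, I would argue that condition (ii) is the precise translation, under the correspondence $L\leftrightarrow\tilde L$ of Lemma~\ref{lem:aux_functional}, of condition (ii) of Theorem~\ref{thm:pos_functionals} applied to the $\R$-linear functional $\tilde L:\Re(V_n)\to\R$. The work then splits into two parts: first, show that condition (ii) forces $L$ to be real-preserving (so that $\tilde L$ even exists), and second, verify that the two sets of conditions match up verbatim.

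For the direction (i)$\Rightarrow$(ii), I would start from a given positive discretization $Lf=\sum_{j=1}^N \lambda_j f(x_j)$ and set $S:=\{x_j : \lambda_j>0\}$, which is non-empty since $L\neq 0$. If $\Re(f)|_S\ge 0$, then $\Re(Lf)=\sum_j\lambda_j\Re(f)(x_j)\ge 0$, because contributions from $x_j\notin S$ vanish and the remaining summands are products of non-negative reals, which gives the first inclusion. If in addition $\Re(Lf)=0$, each non-negative summand must vanish, forcing $\Re(f)|_S=0$, i.e.\ the second inclusion.

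For the harder direction (ii)$\Rightarrow$(i), the first task is to extract the real-preserving property of $L$. Here Lemma~\ref{lem3} is the essential tool: the first inclusion in (ii) gives, via that lemma, $\ker(\Re\circ\Lambda_S)\subseteq\ker(\Re\circ L)$. For any real-valued $f\in V_n$ one has $\Re(\ii f)=-\Im(f)=0$ on all of $\Omega$, so in particular $\ii f\in\ker(\Re\circ\Lambda_S)$, hence $\Re(L(\ii f))=0$. Since $\Im(Lf)=-\Re(L(\ii f))$, this yields $Lf\in\R$, so $L$ is real-preserving and Lemma~\ref{lem:aux_functional} produces $\tilde L:\Re(V_n)\to\R$ with $\tilde L\circ\Re=\Re\circ L$. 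Using this identity, condition (ii) translates into: every $g=\Re(f)\in\Re(V_n)$ with $g|_S\ge 0$ satisfies $\tilde Lg\ge 0$, and if additionally $\tilde Lg=0$ then $g|_S=0$. These are precisely the hypotheses of Theorem~\ref{thm:pos_functionals}(ii) for $\tilde L$, which yields a positive discretization of $\tilde L$ with at most $\dim_\R\Re(V_n)=\effdim_\R V_n$ nodes; Lemma~\ref{lem:pos_functionals_2} then lifts this to a positive discretization of $L$ with the same nodes and weights, giving both the equivalence and the stated node count.

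The main obstacle, as I see it, is the step of recovering the real-preserving property of $L$ from a condition phrased purely in terms of real parts; Lemma~\ref{lem3} is the key gadget that unlocks this, by ensuring that elements annihilated by $\Re\circ\Lambda_S$ are automatically annihilated by $\Re\circ L$. Everything else is a translation exercise between the $\K$-linear picture on $V_n$ and the $\R$-linear picture on $\Re(V_n)$, carried through the correspondence of Lemmas~\ref{lem:aux_functional} and~\ref{lem:pos_functionals_2}.
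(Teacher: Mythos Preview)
Your proposal is correct and follows essentially the same approach as the paper: reduce to Theorem~\ref{thm:pos_functionals} via Lemma~\ref{lem:pos_functionals_2}, with Lemma~\ref{lem3} as the key tool to recover the real-preserving property. The only cosmetic differences are that in (i)$\Rightarrow$(ii) you verify the two inclusions directly from the discretization rather than routing through $\tilde L$, and in (ii)$\Rightarrow$(i) you apply Lemma~\ref{lem3} at $S$ and test real-valued $f$ via $\ii f$, whereas the paper first passes to $\Omega$ (using $\kerplus(\Re\circ\Lambda_\Omega)\subseteq\kerplus(\Re\circ\Lambda_S)$) and invokes Remark~\ref{rem:real-preserving}; both routes are equivalent.
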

\begin{proof}
(i)$\Rightarrow$(ii):\,
Assuming~(i), $L$ is real-preserving and the associated $\R$-linear functional $\tilde{L}$ is positively discretizable due to Lemma~\ref{lem:pos_functionals_2}. By Theorem~\ref{thm:pos_functionals} the positive discretizability of $\tilde{L}$ translates to the conditions in (ii).

(ii)$\Rightarrow$(i):\, 
The first condition in (ii) implies $\kerplus(\Re\circ \Lambda_\Omega)\subseteq \kerplus(\Re\circ L)$ and thus, as a consequence of Lemma~\ref{lem3}, $\ker(\Re\circ \Lambda_\Omega)\subseteq \ker(\Re\circ L)$. This shows that $L$ is real-preserving, see Remark~\ref{rem:real-preserving}.
Let $\tilde{L}$ denote the associated $\R$-linear functional according to Lemma~\ref{lem:aux_functional}. 
The first condition in (ii) states that $\tilde{L}$ is $S$-positive and the second means that $S$-positive functions in $\ker \tilde{L}$ vanish on $S$. 
Theorem~\ref{thm:pos_functionals} thus yields a positive discretization of $\tilde{L}$ with at most $N=\effdim_{\R} V_n$ nodes and positive weights. This discretization
extends to $L$ according to Lemma~\ref{lem:pos_functionals_2}.
\end{proof}

We next show that Theorem~\ref{thm:tchak} on Tchakaloff quadrature can be easily deduced from Theorem~\ref{thm:pos_functionals_2}.

\begin{tcolorbox}
[ colback=white,
  colframe=black,
  arc=0mm,
  boxrule=1pt,
  fonttitle=\bfseries
]
\begin{proof}[\textbf{\textup{Proof of Theorem~\ref{thm:tchak} (based on positive discretizable functionals, Theorem~\ref{thm:pos_functionals_2})}}]

~\\[-1.2ex]
In the setting of Theorem~\ref{thm:tchak}, the integration on $V_n$ defines a $\K$-linear functional $L_{\rm int}$ and an associated positive $\R$-linear functional $\tilde{L}_{\rm int}$ acting on $\Re V_n$. 
The latter is $S$-positive for the set $S:=\Omega\backslash \cN$, where
\[
\cN:= \{ x\in\Omega ~:~ \exists  
f\in \Re V_n: f \sim_{\mu} 0  
\wedge f(x)\neq 0   \} \,,
\]
since $\cN$ is a $\mu$-null set. Further, $\tilde{L}_{\rm int}f=0$ for a function $f\in\Re V_n$ with $f|_S\ge 0$ implies $f\sim_{\mu}0$ and thus $f|_{S}=0$. 
Hence, the assumptions of Theorem~\ref{thm:pos_functionals_2}~(ii) are fulfilled and yield a Tchakaloff discretization of $L_{\rm int}$. 
\end{proof}
\end{tcolorbox}

Let us finally comment on the number of nodes needed for discretization.
We know that $n=\dim_{\K}V_n$ are always sufficient for the discretization of a $\K$-linear functional $L:V_n\to\K$ if weights maybe chosen from $\K$. If we restrict to real or non-negative weights $N=\effdim_{\R}V_n$ nodes may be needed.
In many situations, however, these numbers are too large. We therefore end this section with a closer analysis of the point sets $\cP\subseteq\Omega$ that are actually suitable for exact discretization.

\begin{definition}
We call a point set $\cP:=\{x_1,\ldots,x_N\}\subseteq\Omega$ \emph{suitable for exact discretization of $L$} if there exist associated weights $\mu_1,\ldots,\mu_N$
such that
\begin{align*}
Lf = \sum_{j=1}^{N} \mu_j f(x_j) \quad\text{ for all $f\in V_n$}\,.
\end{align*}
\end{definition}

\begin{proposition}\label{prop:Nopt}
Let $\varphi_1,\ldots,\varphi_n:\Omega\to\K$ be functions on a set $\Omega\neq\emptyset$ and 
let $L:V_n\to\K$ be a $\K$-linear functional on
$V_n:=\lin_{\K}\{\varphi_1,\ldots,\varphi_n\}$.
Put $\bvarphi_L:= (L(\varphi_1),\ldots,L(\varphi_n))^\top\in\K^{n}$ and $\bvarphi(x):=(\varphi_1(x),\ldots,\varphi_n(x))^\top\in\K^{n}$ for $x\in\Omega$. 
Suitability of a point set $\cP:=\{x_1,\ldots,x_N\}\subseteq\Omega$ for exact discretization of $L$ can be characterized as follows. 
\begin{enumerate}[label=\normalfont(\roman*)]
\item
$\cP$ is suitable for exact discretization of $L$ with weights in $\K$ if and only if one of the following two equivalent conditions is satisfied.
\begin{enumerate}[label=\normalfont(\alph*)]
    \item $\bvarphi_L \in \lin_{\K}\{\bvarphi(x_1),\ldots,\bvarphi(x_N)\}$.
    \item $\ker\Lambda_{\cP} \subseteq \ker L $.
\end{enumerate}
\item
$\cP$ is suitable for exact discretization of $L$ with weights in $\R$ if and only if one of the following two equivalent conditions is satisfied.
\begin{enumerate}[label=\normalfont(\alph*)]
     \item $\bvarphi_L \in \lin_{\R}\{\bvarphi(x_1),\ldots,\bvarphi(x_N)\}$.
     \item $\ker\big(\Re\circ\Lambda_{\cP}\big) \subseteq \ker\big(\Re\circ L\big)$.
\end{enumerate}
\item
$\cP$ is suitable for exact discretization of $L$ with weights in $\R_{\ge0}$ if and only if one of the following two equivalent conditions is satisfied.
\begin{enumerate}[label=\normalfont(\alph*)]
\item $\bvarphi_L \in \cone\{\bvarphi(x_1),\ldots,\bvarphi(x_N)\}$.
\item Either $L=0$ or there exists $\cQ\subseteq\cP$ such that
$|\cQ|=\rank(\Re\circ\Lambda_{\cQ})$
and
\begin{align}\label{cond:cone_inclusion}
\kerplus\big(\Re\circ\Lambda_{\cQ}\big) \subseteq \kerplus\big(\Re\circ L\big) \,.
\end{align}
\end{enumerate}
\end{enumerate}
\end{proposition}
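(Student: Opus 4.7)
The plan is to handle all three parts in parallel: in each case I will first establish the hull characterization~(a) through a direct linearity argument, and then derive the kernel-theoretic reformulation~(b) via a suitable bidual/orthogonality principle.

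For the hull conditions~(a), I would rewrite the discretization identity $Lf=\sum_{j=1}^N \mu_j f(x_j)$ evaluated on the basis elements $f=\varphi_i$ as the single vector equation
\[
\bvarphi_L = \sum_{j=1}^N \mu_j\,\bvarphi(x_j)
\]
in $\K^n$. By $\K$-linearity of $L$, the existence of suitable weights in this vector equation is equivalent to the full discretization identity on $V_n$. Restricting $\mu_j$ to lie in $\K$, $\R$, or $\R_{\ge 0}$ immediately yields the three hull memberships in~(i)(a), (ii)(a), and~(iii)(a), respectively.

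For the kernel reformulations in~(i)(b) and~(ii)(b), I would use the dualities $\bc^{\top}\bvarphi(x) = f_{\bc}(x)$ and $\bc^{\top}\bvarphi_L = L(f_{\bc})$, where $f_{\bc}:=\sum_{i=1}^n c_i\varphi_i$ bijectively parametrizes $V_n$ by $\bc\in\K^n$. Part~(i) then follows from the bilinear $(W^{\perp})^{\perp}=W$ principle in $\K^n$: membership of $\bvarphi_L$ in $\lin_{\K}\{\bvarphi(x_j)\}$ is equivalent to the implication $f_{\bc}|_{\cP}=0 \Rightarrow L(f_{\bc})=0$, i.e.\ $\ker\Lambda_{\cP}\subseteq\ker L$. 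Part~(ii) is its real analogue: every $\R$-linear functional on $\K^n$ has the form $\bz\mapsto\Re(\bc^{\top}\bz)$, and the corresponding $\R$-bidual identification turns membership in $\lin_{\R}\{\bvarphi(x_j)\}$ into $\ker(\Re\circ\Lambda_{\cP})\subseteq\ker(\Re\circ L)$. Recovering the full complex discretization $Lf=\sum\mu_j f(x_j)$ from its real part $\Re L(f)=\sum\mu_j\Re f(x_j)$ amounts to substituting $\ii f$ in place of $f$, which produces the analogous identity for $\Im L(f)$ and simultaneously forces $L$ to be real-preserving.

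The main obstacle will be~(iii)(b), which requires a refined Carathéodory step. Starting from $\bvarphi_L\in\cone\{\bvarphi(x_j):j=1,\dots,N\}$, I would extract via Theorem~\ref{CarClassic} a subset $\cQ\subseteq\cP$ for which $\{\bvarphi(x):x\in\cQ\}$ are $\R$-linearly independent in $\K^n$ and still $\bvarphi_L\in\cone\{\bvarphi(x):x\in\cQ\}$. The crucial translation
\[
\rank(\Re\circ\Lambda_{\cQ}) = \dim_{\R}\lin_{\R}\{\bvarphi(x):x\in\cQ\}
\]
will come from the row-rank/column-rank equivalence (Lemma~\ref{lem:column_rank}) applied to the real and imaginary parts of $\varphi_1,\dots,\varphi_n$ restricted to $\cQ$; linear independence then equals $|\cQ|=\rank(\Re\circ\Lambda_{\cQ})$, and the positive representation of $\bvarphi_L$ over $\cQ$ gives the inclusion $\kerplus(\Re\circ\Lambda_{\cQ})\subseteq\kerplus(\Re\circ L)$ at once. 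For the converse, $|\cQ|=\rank(\Re\circ\Lambda_{\cQ})$ makes $\Re\circ\Lambda_{\cQ}$ surjective onto $\R^{\cQ}$, and Lemma~\ref{lem3} together with this surjectivity yields a factorization $\Re\circ L=\psi\circ(\Re\circ\Lambda_{\cQ})$ with an $\R$-linear $\psi:\R^{\cQ}\to\R$. Pulling back standard unit vectors in $\R^{\cQ}_{\ge 0}$ through the $\kerplus$ inclusion forces the coefficients of $\psi$ to be non-negative; a final $\ii f$-substitution then promotes the resulting real identity to a non-negative discretization of $L$ itself, placing $\bvarphi_L$ in the required cone and closing the circle.
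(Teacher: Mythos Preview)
Your proposal is correct and follows essentially the same route as the paper. The paper dismisses conditions~(a) and the (b)-conditions in (i) and (ii) as ``obvious'' and focuses only on (iii)(b); there its necessity argument is precisely your Carath\'eodory extraction of an $\R$-independent subset $\cQ$, and its sufficiency argument is exactly your factorization via Lemma~\ref{lem3} (phrased as an appeal to (ii)(b)) followed by evaluating on functions $g_i\in V_n$ with $(\Re\circ g_i)(y_j)=\delta_{ij}$, which is your ``pulling back standard unit vectors'' step.
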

\begin{proof}
The conditions (a) are obvious, as are the conditions (b) in (i) and (ii). Let us consider condition (b) in (iii). 

Necessity:
If $L\neq0$ the set $\cP$ can be subsampled  
by Carathéodory to a still suitable point set $\cQ$ with the additional property $|\cQ|=\rank (\Re\circ\Lambda_{\cQ})$.
Since $\cQ$ is suitable for exact positive discretization, \eqref{cond:cone_inclusion} is clearly fulfilled.

Sufficiency: In case $L=0$, every finite point set $\cP\subseteq\Omega$ is suitable for exact positive discretization. In case $L\neq 0$,
let $\cQ$ be the points $\{y_{1},\ldots,y_M \}\subseteq\cP$ with $M:=|\cQ|=\rank (\Re\circ\Lambda_{\cQ})$ satisfying
\eqref{cond:cone_inclusion}.
Then $M\le\effdim_{\R}V_n$. Further,
from Lemma~\ref{lem3}, we get  $\ker(\Re\circ\Lambda_{\cQ}) \subseteq \ker( \Re\circ L)$. By condition (b) in (ii) there hence exist real weights $\lambda_1,\ldots,\lambda_M\in\R$ such that
\begin{align*}
	L f = \sum_{j=1}^{M} \lambda_j f(y_j) \quad\text{ for all $f\in V_n$}\,.
\end{align*}
Note that, due to $|\cQ|=\rank(\Re\circ\Lambda_{\cQ})$,
we have $\Re\circ\Lambda_{\cQ}(V_n)=\R^{\cQ}$ and there are functions $g_1,\ldots,g_M\in V_n$ with the property $(\Re\circ g_i)(y_j)=\delta_{ij}$ for $i,j=1,\ldots,M$. These functions are contained in 
$\kerplus(\Re\circ \Lambda_\cQ)$ and thus, as a consequence of~\eqref{cond:cone_inclusion},
\[
0 \le \Re (L g_i) = \sum_{j=1}^{M} \lambda_j (\Re\circ g_i)(y_j) = \lambda_i \quad\text{ for $i=1,\ldots,M$}\,. \qedhere
\]
\end{proof}



\section{Tchakaloff quadrature and Kolmogorov widths}
\label{sec:4}

A direct consequence of 
Theorem~\ref{thm:tchak}, taking into account
Remark~\ref{rem:finite_measure}, is a relation between the Tchakaloff quadrature widths~\eqref{eqdef:kappa_numbers} and Kolmogorov widths. It fits well with the result~\cite[Prop.~2]{Nov86} by Novak, where the discretization of linear functionals is considered with real weights. 
Let $(\Omega,\Sigma,\mu)$ be a finite measure space and $B_\mu(\Omega;\mathbb{R})$ be the collection of all properly bounded $\mu$-measurable functions on $\Omega$ with values in $\R$, normed by $\|f\|_{\infty}:=\sup_{x\in\Omega}|f(x)|$.
The $n$-th Kolmogorov width of $\cF\subseteq B_\mu(\Omega;\mathbb{R})$, where $\cF$ shall be comprised of proper functions, is then given by, see~\cite[p.~45]{DuTeUl18},
\begin{align}\label{eqdef:Kolmo_infty}
d_n(\cF)_{B_\mu(\Omega)} :=  \inf\limits_{\varphi_1,\ldots,\varphi_n} \:
\sup\limits_{f \in \cF} \:
\inf\limits_{c_1,\ldots,c_n\in\R} \:
\Big\| f - \sum\limits_{j=1}^{n} c_j \varphi_j \Big\|_{\infty} \,, 
\end{align}
where $\varphi_1,\ldots,\varphi_n:\Omega\to\R$ run through $B_\mu(\Omega;\mathbb{R})$ and are therefore elements of $L_1(\Omega,\mu;\R)$.
Further, we set 
$d_0(\cF)_{B_\mu(\Omega)} :=  \sup_{f \in \cF} \| f \|_{\infty}$. 
We obtain the following result.

\begin{theorem}\label{prop:Kolmogorov}
Let $(\Omega,\Sigma,\mu)$ be a positive measure space with $\mu(\Omega)<\infty$ and
let $\cF\subseteq B_\mu(\Omega;\mathbb{R})$ be some arbitrary subclass of $\mu$-measurable functions. Then 
\begin{align*}
\kappa^{+}_{n+1}(\cF;\mu)  \le 2\mu(\Omega) d_n(\cF)_{B_\mu(\Omega)} \quad\text{ for all $n\in\N_{0}$} \,.
\end{align*}
\end{theorem}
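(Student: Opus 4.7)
The plan is to combine the Tchakaloff theorem (Theorem~\ref{thm:tchak}) with a standard triangle-inequality comparison against the best approximant in a near-optimal $n$-dimensional subspace. The key observation is that for any $n$-dimensional subspace $V_n\subseteq B_\mu(\Omega;\R)$ witnessing the Kolmogorov width up to $\varepsilon$, we can enlarge it by the constant function $\mathds 1_\Omega$ and then apply Theorem~\ref{thm:tchak} to obtain an exact positive quadrature on $V_n+\lin_\R\{\mathds 1_\Omega\}$ using at most $n+1$ nodes, while Remark~\ref{rem:finite_measure} ensures that the quadrature weights sum to $\mu(\Omega)$.

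More concretely, fix $\varepsilon>0$ and choose $\varphi_1,\ldots,\varphi_n\in B_\mu(\Omega;\R)$ such that the subspace $V_n:=\lin_\R\{\varphi_1,\ldots,\varphi_n\}$ satisfies
\[
\sup_{f\in\cF}\inf_{g\in V_n}\|f-g\|_\infty \;\le\; d_n(\cF)_{B_\mu(\Omega)}+\varepsilon.
\]
Since $\mu(\Omega)<\infty$, the enlarged space $V_{n+1}:=V_n+\lin_\R\{\mathds 1_\Omega\}$ is a subspace of $L_1(\Omega,\mu;\R)$ of real dimension at most $n+1$. By Theorem~\ref{thm:tchak} together with Remark~\ref{rem:finite_measure}, there exist points $x_1,\ldots,x_{n+1}\in\Omega$ and non-negative weights $\mu_1,\ldots,\mu_{n+1}\ge 0$ with $\sum_{j=1}^{n+1}\mu_j=\mu(\Omega)$ such that
\[
\int_\Omega h(x)\dint\mu(x) \;=\; \sum_{j=1}^{n+1}\mu_j h(x_j) \quad\text{for all }h\in V_{n+1}.
\]

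Now, for arbitrary $f\in\cF$ and arbitrary $g\in V_n\subseteq V_{n+1}$, the exactness on $V_{n+1}$ gives $\int g\dint\mu = \sum_{j}\mu_j g(x_j)$, and hence
\[
\Big|\int_\Omega f\dint\mu - \sum_{j=1}^{n+1}\mu_j f(x_j)\Big|
\;\le\; \int_\Omega |f-g|\dint\mu + \sum_{j=1}^{n+1}\mu_j|f(x_j)-g(x_j)|
\;\le\; 2\mu(\Omega)\|f-g\|_\infty.
\]
Taking the infimum over $g\in V_n$ and the supremum over $f\in\cF$ shows
\[
\kappa^+_{n+1}(\cF;\mu) \;\le\; 2\mu(\Omega)\bigl(d_n(\cF)_{B_\mu(\Omega)}+\varepsilon\bigr),
\]
and letting $\varepsilon\to 0$ finishes the proof. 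The case $n=0$ is handled identically, with $V_0=\{0\}$, so $V_1=\lin_\R\{\mathds 1_\Omega\}$ and Tchakaloff produces a single node with weight $\mu(\Omega)$.

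There is no real obstacle here: the argument is entirely standard once Theorem~\ref{thm:tchak} is in place. The only minor subtlety is that the infimum defining $d_n(\cF)_{B_\mu(\Omega)}$ need not be attained, which is why we introduce $\varepsilon$ and let it vanish at the end; likewise, one must verify that the near-optimal subspace can indeed be chosen inside $B_\mu(\Omega;\R)\subseteq L_1(\Omega,\mu;\R)$, but this is immediate from the definition~\eqref{eqdef:Kolmo_infty}.
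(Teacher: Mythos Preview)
Your proof is correct and follows essentially the same approach as the paper: enlarge a (near-)optimal $n$-dimensional subspace by $\mathds 1_\Omega$, apply Theorem~\ref{thm:tchak} with Remark~\ref{rem:finite_measure} to obtain $n{+}1$ Tchakaloff nodes whose weights sum to $\mu(\Omega)$, and then use the triangle inequality together with $\|f-g\|_{L_1(\mu)}\le\mu(\Omega)\|f-g\|_\infty$ and $\sum_j\mu_j=\mu(\Omega)$. The only difference is that you make the $\varepsilon$-argument for the possibly unattained infimum explicit, whereas the paper simply writes ``optimizing over all $n$-dimensional subspaces $V_n$''.
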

\begin{proof}
Let $V_n$ be an $n$-dimensional subspace of $L_1(\Omega,\mu;\R)$ and let $V_{n+1} := V_n + \lin_{\K}\{ \mathds 1_\Omega \}$, where $\mathds 1_\Omega$ is the $1$-function on $\Omega$. According to Theorem~\ref{thm:tchak}
and Remark~\ref{rem:finite_measure} there exists
an associated Tchakaloff node-weight set  $(x_j,\mu_j)_{j=1,\ldots,n+1}$ with $\sum_{j=1}^{n+1} \mu_j=\mu(\Omega)$ and 
for any $f\in\cF$ and $g\in V_n$ we have
\begin{align*}
\Big| \int_{\Omega} f(x) \dint\mu(x) -  \sum_{j=1}^{n+1} \mu_j f(x_j)   \Big|
=  \Big| \int_{\Omega} (f-g)(x) \dint\mu(x) -  \sum_{j=1}^{n+1} \mu_j (f-g)(x_j)   \Big|  
\le 2 \mu(\Omega) \|f - g \|_{\infty}  \,.
\end{align*}
Here we used $\|f\|_{L_1(\mu)} \le \mu(\Omega)\|f\|_{\infty}$ and $\sum_{j=1}^{n+1} \mu_j=\mu(\Omega)$.
Optimizing over all $n$-dimensional subspaces $V_n$, we get the stated estimate.
\end{proof}

A similar proof idea can be used to deduce
a relation for special subclasses $\cF\subseteq L_2(\Omega,\mu;\R)$, where $\cF$ is as before comprised of proper functions.
Let 
\begin{align*}
 		\sigma_1 \geq \sigma_2 \geq \sigma_3 \geq \cdots \ge 0 
\end{align*}
be a non-increasing sequence of real numbers contained in $\ell_2(\N;\R_{\ge 0})$.
Further, let $J:=\{ j\in\N : \sigma_j>0 \}$ and
$\{\eta_j\}_{j\in J}$ be an orthonormal system of proper functions in $L_2(\Omega,\mu;\R)$
associated to the non-zero numbers $\sigma_j$.
Essentially, the considered classes $\cF$ shall be of the basic form
\begin{align*}
\cF_0 := \Big\{ \sum_{j\in J} c_j \eta_j ~:~  \{c_j\}_{j\in J} \subset \R \enspace,\quad \sum_{j\in J} \Big(\frac{c_j}{\sigma_j}\Big)^2 \le 1 \Big\} \,.
\end{align*}
In addition, we allow `distortions' $\cN_f$ of the functions $f\in\cF_0$. For this, let $\cN\subseteq\Omega$ be some fixed $\mu$-null set and
for each $f\in\cF_0$ let
\begin{align*}
\cN_f\subseteq \Big\{ g \text{ $\mu$-measurable} ~:~  \supp g \subseteq \cN \Big\} \,,
\end{align*}
The classes $\cF$ are then given by
\begin{align}\label{eqdef:subclass_F}
    \cF:= \Big\{ f + \cN_f~:~ f\in \cF_0 \Big\} \,.
\end{align}

\begin{remark}[Separable RKHS with finite trace] Prominent examples of such $\cF$ are the unit balls $B_1(\cH):=\{f\in\cH : \|f\|_{\cH}\le 1\}$ in separable 
reproducing kernel Hilbert spaces $\cH := \cH(k)$ having a  finite trace
\begin{equation}\label{eq:trace}
    \int_{\Omega} k(x,x)\dint\mu(x)<\infty
\end{equation}
with respect to a finite (probability) measure $\mu$. As shown in Example \ref{ex:non-separable_RKHS} below, the separability condition seems to be essential.
In this case,
the non-zero numbers $\sigma_j$ in the definition of $\cF$, indexed by $J$, correspond to the non-zero singular numbers of the embedding
\begin{equation*}
\Id: \cH \rightarrow L_2(\Omega,\mu;\R) \,.
\end{equation*}
Fixing associated systems of left- and right-singular functions 
$\{e_j\}_{j\in J}$ and $\{\eta_j\}_{j\in J}$, 
which shall satisfy $e_j=\sigma_j\eta_j$ and 
\begin{align*}
\langle \eta_j,\eta_k \rangle_{L_2(\mu)} = \delta_{j,k} = \langle e_j,e_k \rangle_{\cH} \,,
\end{align*}
any function $f\in B_1(\cH)$
can be represented in the form 
\begin{align*}
f = \sum_{j\in J} \langle f,e_j \rangle_{\cH} e_j  + d_f  = \sum_{j\in J} \langle f,\eta_j \rangle_{L_2(\mu)} \eta_j  + d_f 
\end{align*}
with $d_f\in\ker\Id$ and $\|\{c_j/\sigma_j\}_{j}\|_{\ell_2}\le 1$ for $c_j:= \langle f,\eta_j \rangle_{L_2(\mu)}$. Put $\cN_f:=\{d_f\}$. Then $B_1(\cH)$ is of the form~\eqref{eqdef:subclass_F}.
Indeed, the separability of $\cH$ ensures that $\ker\Id$ has a countable basis and that hence
\begin{align*}
\cN:= \{ x\in\Omega ~:~ \exists  
f\in \ker \Id: f(x)\neq 0   \} 
\end{align*}
is a $\mu$-null set. Further, $\supp d_f\subseteq\cN$ for any $f\in B_1(\cH)$. 
\end{remark}

The following bound significantly improves on the results in \cite[Thms.\ 1, 5, 6, 9]{HaObLy22}, where similar bounds under more restrictive assumptions have been stated.  

\begin{theorem}\label{prop:Kolmogorov_2}
Let $(\Omega,\Sigma,\mu)$ be a positive measure space with $\mu(\Omega)<\infty$ and
let $\cF\subseteq L_2(\Omega,\mu;\R)$ be a subclass of the form~\eqref{eqdef:subclass_F}.
Then 
\begin{align*}
\kappa^{+}_{n}(\cF;\mu) 
\leq 2\sqrt{\mu(\Omega)} \Big( \sum\limits_{j=n}^{\infty}\sigma_j^2 \Big)^{1/2} \quad\text{ for all $n\in\N$} \,.
\end{align*}
\end{theorem}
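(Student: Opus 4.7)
The strategy is to mimic the proof of Theorem~\ref{prop:Kolmogorov} but to replace the uniform-norm estimate by a Cauchy--Schwarz estimate in $L_2(\mu)$, enriching the Tchakaloff subspace by a single non-negative function that controls the discrete $L_2$-norm of residual components of any $f\in\cF_0$. The key new object is the \emph{residual Mercer kernel}
\[
   K_{\ge n}(x,y):=\sum_{j\in J,\,j\ge n}\sigma_j^{2}\,\eta_j(x)\eta_j(y),
\]
whose diagonal $x\mapsto K_{\ge n}(x,x)$ is non-negative, belongs to $L_1(\Omega,\mu;\R)$ with $\int_\Omega K_{\ge n}(x,x)\dint\mu(x)=\sum_{j\ge n}\sigma_j^{2}$, and pointwise majorises the squared tail $f_{\ge n}^{2}:=(\sum_{j\ge n}c_j\eta_j)^{2}$ of any $f_0=\sum_{j\in J}c_j\eta_j\in\cF_0$ by a Cauchy--Schwarz inequality in the Mercer series (the constant being $1$ thanks to $\sum_{j\in J}(c_j/\sigma_j)^{2}\le 1$).

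Concretely, I would select $\mu$-equivalent proper representatives of $\eta_1,\ldots,\eta_{n-1}$, $\mathds 1_\Omega$, and $K_{\ge n}(\cdot,\cdot)$ that vanish on the $\mu$-null set $\cN$ (by multiplication with $\chi_{\Omega\setminus\cN}$), and then apply Theorem~\ref{thm:tchak} together with Remark~\ref{rem:finite_measure} to the subspace $V:=\lin_{\R}\{\eta_1,\ldots,\eta_{n-1},\mathds 1_\Omega,K_{\ge n}(\cdot,\cdot)\}$. Exactness on the representative of $\mathds 1_\Omega$ yields $\sum_k\mu_k=\mu(\Omega)$ and forces every node with $\mu_k>0$ into $\Omega\setminus\cN$, so the distortion $d_f\in\cN_f$ never appears in the quadrature. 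Writing $f=f_0+d_f\in\cF$ and splitting $f_0=f_{<n}+f_{\ge n}$ with $f_{<n}:=\sum_{j<n}c_j\eta_j\in V$, exactness annihilates $f_{<n}$ and reduces the error to $|If-Qf|=|If_{\ge n}-Qf_{\ge n}|$. Two Cauchy--Schwarz estimates close the bound: the integral term is estimated via
\[
   |If_{\ge n}|\le\sqrt{\mu(\Omega)}\,\|f_{\ge n}\|_{L_2(\mu)}\le\sqrt{\mu(\Omega)\sum_{j\ge n}\sigma_j^{2}},
\]
using $|c_j|\le\sigma_j$ (an immediate consequence of $\sum(c_j/\sigma_j)^2\le 1$), while the quadrature term obeys $|Qf_{\ge n}|\le\sqrt{\mu(\Omega)}\,\bigl(\sum_k\mu_k f_{\ge n}^{2}(x_k)\bigr)^{1/2}$, to which the pointwise majorisation and the exact quadrature of $K_{\ge n}(\cdot,\cdot)$ apply to produce $\sum_k\mu_k f_{\ge n}^{2}(x_k)\le\sum_k\mu_k K_{\ge n}(x_k,x_k)=\sum_{j\ge n}\sigma_j^{2}$. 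Adding the two bounds via the triangle inequality yields the prefactor $2$.

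The main obstacle is matching the node count in the statement. Generically $\dim V=n+1$, so Theorem~\ref{thm:tchak} a priori produces up to $n+1$ nodes, which only bounds $\kappa^{+}_{n+1}$. To recover the sharp index $n$ one has to either exploit linear dependencies trimming $V$ or, more robustly, avoid placing $K_{\ge n}(\cdot,\cdot)$ into the Tchakaloff subspace altogether, instead controlling the nonnegative quantity $\sum_k\mu_k K_{\ge n}(x_k,x_k)$ through an independent minimax or averaging argument while running Tchakaloff on the $n$-dimensional space $\lin_{\R}\{\eta_1,\ldots,\eta_{n-1},\mathds 1_\Omega\}$. A secondary technical point is the justification of the pointwise Mercer-type majorisation $f_{\ge n}^{2}\le K_{\ge n}(\cdot,\cdot)$ off the exceptional $\mu$-null set on which $K_{\ge n}(\cdot,\cdot)$ might be infinite; this leans on the countable structure of $J$ inherited from the separability hypothesis stressed in the remark preceding the theorem.
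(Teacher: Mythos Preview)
Your strategy is essentially the paper's own, and you have correctly isolated the single remaining gap: the node count. The paper resolves it not by any of the alternatives you list, but by a one-line modification of your auxiliary function. Instead of adjoining the \emph{pair} $\{K_{\ge n}(\cdot,\cdot),\mathds 1_\Omega\}$ to the Tchakaloff space, the paper adjoins only $\sqrt{K_{\ge n}(\cdot,\cdot)}$. The point is that with the square root in the space, no discrete Cauchy--Schwarz is needed: from your pointwise majorisation $|f_{\ge n}(x)|\le\sqrt{K_{\ge n}(x,x)}$ one gets directly
\[
|Qf_{\ge n}|\le\sum_k\mu_k\,|f_{\ge n}(x_k)|\le\sum_k\mu_k\sqrt{K_{\ge n}(x_k,x_k)}=\int_\Omega\sqrt{K_{\ge n}(x,x)}\dint\mu(x)\le\sqrt{\mu(\Omega)}\Big(\sum_{j\ge n}\sigma_j^2\Big)^{1/2},
\]
where the equality is exactness of the Tchakaloff rule on $\sqrt{K_{\ge n}}$ and the last step is Cauchy--Schwarz applied only on the \emph{integral} side. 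This removes the need for $\mathds 1_\Omega$, so the Tchakaloff space becomes $\lin_{\R}\{\eta_1,\ldots,\eta_{n-1},\sqrt{K_{\ge n}(\cdot,\cdot)}\}$, of dimension at most $n$, yielding $n$ nodes and the stated bound on $\kappa_n^+$.

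Two smaller remarks. First, exactness on $\mathds 1_\Omega$ does not by itself force nodes into $\Omega\setminus\cN$; the cleaner device, which the paper uses, is simply to run the whole argument on the restricted domain $D:=\Omega\setminus\cN$ (your multiplication by $\chi_D$ achieves the same thing, since nodes landing in $\cN$ then contribute nothing and can be discarded). Second, your Mercer-type worry is handled the same way: $K_{\ge n}(\cdot,\cdot)\in L_1(\mu)$ implies pointwise finiteness off a further $\mu$-null set, which one absorbs into $\cN$ before applying Tchakaloff.
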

\begin{proof}
Let $D:=\Omega\setminus\cN$ denote the restricted domain, where the $\mu$-null set $\cN$ entering in the definition of $\cF$ is taken out, see~\eqref{eqdef:subclass_F}. We further denote $\tilde{\eta}_j:=\eta_j|_D$ for $j\in J$ and, if $|J|<\infty$, put $\tilde{\eta}_n:=0$ for $n>|J|$.
Put $V_0:=\{0\}$ and for $n\in\N$ let
\begin{align*}
V_n := \lin_{\R}\{ \tilde{\eta}_1,\ldots,\tilde{\eta}_n \} \,.
\end{align*}
Note that $V_n\subseteq L_1(D,\mu;\R)$ since $\mu(D)=\mu(\Omega)<\infty$.
Further, for $n\in\N_{0}$, we define the function 
\begin{align*}
E_n(x):= \sum_{j=n+1}^\infty \sigma_j^2 \tilde{\eta}_j(x)^2 \,, 
\end{align*}
which, due to $\{\sigma_j\}_{j\in\N}\in \ell_2(\N;\R_{\ge 0})$ and either $\|\tilde{\eta}_j\|_{L_2(\mu)}=1$ or $\|\tilde{\eta}_j\|_{L_2(\mu)}=0$, is well-defined in $L_1(D,\mu;\R)$ (monotone convergence in $L_1$).  In particular, the defining series converges point-wise absolutely almost everywhere.
We adjoin
the function $\sqrt{E_n}$ to the space $V_n$, defining 
\begin{align*}
V_{n}^{+}:= V_n \oplus \lin_{\R}\{ \sqrt{E_n} \} \,.
\end{align*}
Applying Theorem~\ref{thm:tchak} to $V_{n}^{+}$ yields
an associated Tchakaloff node-weight set  $\{(x_j,\mu_j)\}_{j=1,\ldots,n+1}$ contained in $ D\times\R_{\ge0}$. 
Let $\tilde{f}:=f|_D$ denote the restriction of $f=f_0 + d\in\cF$, where $f_0=\sum_{j\in J} c_j \eta_j\in\cF_0$ and $d\in\cN_f$, see~\eqref{eqdef:subclass_F}.
Then $\tilde{f}=\sum_{j\in J} c_j \tilde{\eta}_j$ since $\supp d\subseteq\cN$ and, for $\tilde{g}:=\sum_{j=1}^{n} c_j \tilde{\eta}_j\in V_{n}^{+}$, where the coefficient sequence $\{c_j\}_{j\in J}$ is extended trivially if $n>|J|$, 
it holds 
\begin{align*}
\Big| \int_{D} \tilde{f}(x) \dint\mu(x) -  \sum_{j=1}^{n+1} \mu_j \tilde{f}(x_j)   \Big| 
\le \|\tilde{f}-\tilde{g}\|_{L_1(\mu)} + \Big| \sum_{j=1}^{n+1} \mu_j (\tilde{f}-\tilde{g})(x_j)\Big| \,.
\end{align*}
Hereby, the first term is clearly equal to
\begin{align*}
\Big| \int_{\Omega} f(x) \dint\mu(x) -  \sum_{j=1}^{n+1} \mu_j f(x_j)   \Big|
\,.
\end{align*}

Further, defining $c_j:=0$ for $j>|J|$, 
\begin{align*}
\|\tilde{f}-\tilde{g}\|^2_{L_1(\mu)}
\le \mu(\Omega) \Big\|\sum_{j=n+1}^{\infty} c_j\tilde{\eta_j}\Big\|^2_{L_2(\mu)}
= \mu(\Omega)\sum_{j=n+1}^{\infty} c^2_j
\le \mu(\Omega) \sum_{j\in J} \Big( \frac{c_j}{\sigma_j}\Big)^2 \sigma^2_{n+1}
\le \mu(\Omega) \sigma^2_{n+1} 
\end{align*}
and by Hölder, for every $x\in D$, 
\[
|(\tilde{f}-\tilde{g})(x)| \le \bigg(\sum_{j=n+1}^{\infty} \Big(\frac{c_j}{\sigma_j}\Big)^2\bigg)^{1/2} \Big(\sum_{j=n+1}^{\infty} \sigma_j^2\tilde{\eta}_j(x)^2\Big)^{1/2} \le  \Big(\sum_{j=n+1}^{\infty} \sigma_j^2\tilde{\eta}_j(x)^2\Big)^{1/2}  = \sqrt{E_n(x)} \,.
\]
A calculation yields
\begin{align*}
 \|E_n\|_{L_1(\mu)} = \sum_{j=n+1}^{\infty}\sigma_j^2 \,.
\end{align*}
We obtain, since the Tchakaloff nodes $x_j$ lie in $D$,
\begin{align*}
   \Big| \sum_{j=1}^{n+1} \mu_j (\tilde{f}-\tilde{g})(x_j)\Big| &\le  \sum_{j=1}^{n+1} \mu_j \sqrt{E_n(x_j)}  = \big\|\sqrt{E_n} \big\|_{L_1(\mu)} 
   \le  
   \sqrt{\mu(\Omega)}\Big( \sum_{j=n+1}^{\infty}\sigma_j^2 \Big)^{1/2} \,.  \qedhere
\end{align*}
\end{proof}

\begin{remark} The result in Theorem~\ref{prop:Kolmogorov_2} may be generalized to classes $\cF\subseteq L_2(\Omega,\mu;\R)$ for which $d_n(\cF)_{L_2(\mu)}/\sqrt{n}$ are summable, see \cite[Prop.\ 1]{KrPoUlUl25}. The condition \eqref{eqdef:subclass_F} should then be replaced with \cite[Assumption B]{KrPoUlUl25}. 
\end{remark}

\begin{remark}[Discussion of Theorems \ref{prop:Kolmogorov}, \ref{prop:Kolmogorov_2}] {\em (i)} The result in Theorem~\ref{prop:Kolmogorov_2} has to be compared to \cite[Thm\ 7.2]{KaUlVo21}, where arbitrary weights for the numerical integration are admitted. Here it is proved that there is a $c>1$ such that  
$$
    \kappa_{cn}(\cF;\mu)^2 \lesssim \frac{ \log(n)}{n}\sum\limits_{k\geq n} \sigma_k^2\,.
$$
The $\log$-term can be removed using the refined analysis in \cite{NaSchUl22}, \cite{DoKrUl23}. In any case, this bound is asymptotically better by a factor of $n^{-1/2}$ compared to the one in Theorem \ref{prop:Kolmogorov_2} if the singular values decay polynomially. This indicates that the bound in Theorem \ref{prop:Kolmogorov_2} might be improved significantly. We leave this as an open problem.  

{\em (ii)} To compare Theorem~\ref{prop:Kolmogorov_2} to the earlier result, Theorem~\ref{prop:Kolmogorov}, 
note that the numbers $\sigma_{n+1}$ coincide with the corresponding $n$-th Kolmogorov widths $d_n(\cF)_{L_2(\mu)}$ of $\cF$ with respect to $L_2(\Omega,\mu;\R)$, namely
$d_0(\cF)_{L_2(\mu)} :=  \sup_{f \in \cF} \| f \|_{L_2(\mu)}$ and 
\begin{align}\label{eqdef:Kolmo_two}
d_n(\cF)_{L_2(\mu)} :=  \inf\limits_{\varphi_1,\ldots,\varphi_n} \:
\sup\limits_{f \in \cF} \:
\inf\limits_{c_1,\ldots,c_n\in\R} \:
\Big\| f - \sum\limits_{j=1}^{n} c_j \varphi_j \Big\|_{L_2(\mu)} \quad\text{ for $n\in\N$} \,, 
\end{align}
where $\varphi_1,\ldots,\varphi_n:\Omega\to\R$ are arbitrary functions in $L_2(\Omega,\mu;\R)$.

Note that in contrast to $d_n(\cF)_{L_2(\mu)}$ the width $d_n(\cF)_{B_\mu(\Omega)}$ is usually difficult to estimate. A prominent example with surprisingly fast decaying Kolmogorov widths $d_n(\cF)_{B_\mu(\Omega)}$ is given by functions with bounded mixed derivative on the $d$-torus $H^r_{\text{mix}}(\mathbb{T}^d)$ with $r>1/2$. Here we have 
$$
    d_n(H^r_{\text{mix}}(\mathbb{T}^d))_{C} \lesssim n^{-r}(\log n)^{(d-1)r+1/2},
$$
whereas 
$$
    d_n(H^r_{\text{mix}}(\mathbb{T}^d))_{L_2} \asymp 
    n^{-r}(\log n)^{(d-1)r}\,,
$$
see \cite[Sect.\ 4.3]{DuTeUl18} and \cite{TeUl21}. Hence, for this example the bound in Theorem \ref{prop:Kolmogorov_2} is significantly worse than the one in Theorem \ref{prop:Kolmogorov}. On the other hand, the upper bound in Theorem \ref{prop:Kolmogorov} is close to the one coming from \cite[Thm\ 7.2]{KaUlVo21}, where arbitrary integration weights are used.  Finally, it is well-known that, see \cite{GoSuYo17},   
$$
\kappa^{+}_{n}(H^r_{\text{mix}}(\mathbb{T}^d);\lambda^d) \asymp n^{-r}(\log n)^{(d-1)/2}\,,
$$
holds. This shows that all the mentioned bounds are not sharp for this example.
\end{remark}

We are now discussing the following negative result for a non-separable reproducing kernel Hilbert space.

\begin{example}\label{ex:non-separable_RKHS}
We first recall the non-separable space
\begin{align*}
\ell_2([0,1];\R) :=\Big\{ f:\Omega\to\R ~:~ \supp f \text{ is countable.} \Big\}
\end{align*}
with inner product
\begin{align}\label{def:inn_product_1}
\langle f,g \rangle_{\ell_2} := \sum_{x\in\supp f\cap\supp g} f(x)g(x) \,.
\end{align}
From this space, we can build the Hilbert function space
\begin{align*}
 \cH :=   \ell_2([0,1];\R) \oplus_{\perp} \lin_{\R}\{ \mathds 1_{[0,1]} \} \,,
\end{align*}
whose inner product $\langle\cdot,\cdot\rangle_{\cH}$ is obtained by extending~\eqref{def:inn_product_1} via the assignment $\langle \mathds 1_{[0,1]},\mathds 1_{[0,1]} \rangle_{\cH}:=1$ and $\langle \mathds 1_{[0,1]},g\rangle_{\cH}:=0$ for every $g\in \ell_2([0,1];\R)$.
It has the reproducing kernel
\[
k(x,y):= \mathds{1}_{x}(y) + \mathds 1_{[0,1]}(y)  = 1 + \delta_{x,y} \enspace,\quad (x,y)\in[0,1]^2\,.
\]
The identity map $\Id: \cH \rightarrow L_2([0,1],\lambda;\R)$, where $\lambda$ is the Lebesgue measure on $[0,1]$, is a rank-$1$ embedding with $\ker\Id = \ell_2([0,1];\R)$ and $\sigma_1=1$ the only non-vanishing singular number.
Hence, if Theorem~\ref{prop:Kolmogorov_2} were applicable to 
$\cF:=B_1(\cH)\subset L_2([0,1],\lambda;\R)$, exact positive quadrature rules for $\cF$ with $n\ge 2$ nodes would exist. However, this is not possible since such rules $(x_j,\lambda_j)_{j=1,\ldots,n}$  
necessarily satisfy $\sum_{j=1}^{n} \lambda_j =1$ and thus yield quadrature $1$ for the function $\sum_{j=1}^{n} \mathds 1_{x_j}$, whose integral is $0$. 
\end{example}

The final result of this section shows that we indeed see a decay of the Tchakaloff quadrature widths for general reproducing kernel Hilbert spaces with finite trace. The proof strategy is due to \cite[Thm.\ 10.4]{NoWoII}, see also \cite[Thm.\ 1]{HaObLy22}. Note, that the argument  below shows that the separability of the space $\cH(k)$ is not required.  

\begin{theorem} Let $\mathcal{H}(k)$ denote a not necessarily separable reproducing kernel Hilbert space on $\Omega$ with positive definite kernel $k:\Omega \times \Omega \to \mathbb{R}$. Let further $\mu$ denote a probability measure on $\Omega$ such that the kernel $k$ is $\mu$-measurable and has finite trace, i.e., \eqref{eq:trace} holds true. Then it holds for any $n\in \mathbb{N}$
$$
    \kappa_n^+(\mathcal{H};\mu) \leq \frac{C_{k,\mu}}{\sqrt{n}}
$$
for some constant $C_{k,\mu}>0$ depending only on the kernel $k$ and the measure $\mu$\,.
\end{theorem}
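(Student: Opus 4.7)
The plan is to use the standard Monte Carlo / random sampling argument for RKHS quadrature, which produces a non-negatively weighted rule of rate $n^{-1/2}$ and, as will become clear, does not rely on separability of $\mathcal{H}(k)$.

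First, I would introduce the random equal-weight rule $Q_n f := \frac{1}{n}\sum_{j=1}^{n} f(X_j)$ with i.i.d.\ samples $X_1,\dots,X_n \sim \mu$; all weights $1/n$ are automatically non-negative, so this is a legitimate competitor for the infimum defining $\kappa_n^+$. By the reproducing property, the (deterministic, for fixed samples) error functional $e_n := I - Q_n$, where $If := \int_\Omega f\,\mathrm{d}\mu$, acts on $f \in \mathcal{H}(k)$ as $e_n(f) = \langle f, \rho_n\rangle_{\mathcal{H}}$ with representer
\[
\rho_n := \int_\Omega k(\cdot, x)\,\mathrm{d}\mu(x) - \frac{1}{n}\sum_{j=1}^{n} k(\cdot, X_j) \in \mathcal{H}(k).
\]
Cauchy--Schwarz gives $\sup_{\|f\|_{\mathcal{H}}\le 1}|e_n(f)|^2 = \|\rho_n\|_{\mathcal{H}}^2$, and expanding this inner product via the reproducing property produces the explicit scalar expression
\[
\|\rho_n\|_{\mathcal{H}}^2 = \iint k(x,y)\,\mathrm{d}\mu(x)\,\mathrm{d}\mu(y) - \frac{2}{n}\sum_{j=1}^{n}\int k(x, X_j)\,\mathrm{d}\mu(x) + \frac{1}{n^2}\sum_{i,j=1}^{n} k(X_i, X_j).
\]

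Second, I would take expectation over the samples. Exploiting independence of $X_i$ and $X_j$ for $i\neq j$, together with $\mathbb{E}\,k(X_i,X_j) = \iint k\,\mathrm{d}\mu\otimes\mathrm{d}\mu$ and $\mathbb{E}\,k(X_i,X_i) = \int k(x,x)\,\mathrm{d}\mu(x)$, the cross terms collapse and a short computation gives
\[
\mathbb{E}\,\|\rho_n\|_{\mathcal{H}}^2 = \frac{1}{n}\Big(\int_\Omega k(x,x)\,\mathrm{d}\mu(x) - \iint k(x,y)\,\mathrm{d}\mu(x)\,\mathrm{d}\mu(y)\Big) \le \frac{C_{k,\mu}^2}{n},
\]
where $C_{k,\mu} := \bigl(\int_\Omega k(x,x)\,\mathrm{d}\mu(x)\bigr)^{1/2}$ is finite by the finite trace assumption~\eqref{eq:trace}, and the negative second term is dropped using positive definiteness of $k$ (so $\iint k\,\mathrm{d}\mu\otimes\mathrm{d}\mu\ge 0$). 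The probabilistic method then yields a realization $(x_1,\dots,x_n)\in\Omega^n$ with $\|\rho_n\|_{\mathcal{H}}^2 \le C_{k,\mu}^2/n$, so the corresponding rule with nodes $x_1,\dots,x_n$ and weights $1/n$ witnesses the claimed bound $\kappa_n^+(\mathcal{H};\mu) \le C_{k,\mu}/\sqrt{n}$.

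The main subtlety --- the reason the argument is worth rerunning in the non-separable setting --- is that one might worry about interpreting $\int k(\cdot,x)\,\mathrm{d}\mu(x)$ as a Bochner integral in $\mathcal{H}(k)$, where standard measurability hypotheses require essentially separable-valued integrands. I would circumvent this entirely by \emph{defining} $\rho_n$ implicitly through its scalar action $\langle f,\rho_n\rangle_{\mathcal{H}} = e_n(f)$ (which is well-defined since $|f(x)|\le \sqrt{k(x,x)}\|f\|_{\mathcal{H}}$ and $\sqrt{k(x,x)} \in L_1(\mu)$ by Cauchy--Schwarz and the finite-trace assumption), and by reading $\|\rho_n\|_{\mathcal{H}}^2$ directly from the displayed scalar formula, which only involves the $\mu$-measurable scalar function $k$ and is therefore a measurable function of the samples to which Fubini and the expectation computation above apply unconditionally. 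This is the only place where care is needed; everything else is routine.
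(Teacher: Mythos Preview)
Your proof is correct and follows essentially the same approach as the paper: both use the equal-weight Monte Carlo rule, expand the squared worst-case error via the reproducing property into the explicit kernel formula, and average over i.i.d.\ samples to obtain the $1/n$ bound. Your discussion of how to avoid Bochner-integral issues in the non-separable case is more explicit than the paper's, which simply verifies continuity of the integration functional via the pointwise bound $|f(x)|\le \sqrt{k(x,x)}\,\|f\|_{\mathcal H}$ and then works directly with the scalar kernel expression.
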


\begin{proof} The proof goes literally along the lines of \cite[Thm.\ 10.4]{NoWoII}. The separability is not needed. Indeed, we may directly deduce from \eqref{eq:trace} that the integration functional is continuous on $\cH$. We have 
$$
    |I(f)| = \Big|\int_\Omega f(x)\dint\mu(x)\Big| \leq 
    \int_\Omega |\langle f, k(\cdot, x)\rangle_{\cH}|\dint\mu(x)
    \leq \|f\|_{\cH} \int_\Omega \sqrt{k(x,x)}\dint\mu(x)\,.
    $$
    Since $\mu$ is a probability measure we have by the Cauchy-Schwarz inequality 
    $$
        \int_\Omega \sqrt{k(x,x)}\dint\mu(x) \leq \Big(\int_\Omega k(x,x)\dint\mu(x)\Big)^{1/2}<\infty\,.
    $$
    Let us now consider the integration error functional $$R_n(f) = I(f) - \frac{1}{n}\sum\limits_{i=1}^n f(x_i)\,.$$
    Clearly, for fixed $x_1,\ldots,x_n$ we have 
    \begin{equation*}
       \begin{split} 
        \kappa_n^+(\mathcal{H},\mu)^2 &\leq \|R_n\|_{\mathcal{H}'}^2\\
        & = \int_\Omega \int_\Omega k(x,y)\dint\mu(x)d\mu(y)-\frac{2}{n}\sum\limits_{i=1}^n \int_\Omega k(x_i,y)\dint\mu(y)+\frac{1}{n^2}\sum\limits_{i=1}^n\sum\limits_{j=1}^n k(x_i,x_j)\,.
       \end{split} 
    \end{equation*}
    Averaging over all $x_1,\ldots,x_n$, i.e., integrating $n$ times with respect to $\mu$ yields
    \begin{equation*}
       \begin{split}
        \kappa_n^+(\mathcal{H},\mu)^2 &\leq 
        \Big(1-2+1-\frac{1}{n}\Big)\int_\Omega \int_\Omega k(x,y)\dint\mu(x)d\mu(y)+\frac{1}{n}\int_\Omega
        k(x,x)\dint\mu(x)\\
        &= \frac{\int_{\Omega} k(x,x)\dint\mu(x) - \int_{\Omega} \int_{\Omega} k(x,y)\dint\mu(x)d\mu(y)}{n}\\
        & = \frac{C^2_{k,\mu}}{n}\,. \qedhere
      \end{split} 
    \end{equation*}
\end{proof}

The previous result does not only show the existence of a  quadrature rule with positive weights and the stated properties. It rather shows that 
$$
    \Ept_{x_1\sim \mu,\ldots,x_n\sim \mu} \bigg(\sup\limits_{\|\cH\|\leq 1}\Big|\int_\Omega f(x)\dint\mu(x)- \frac{1}{n}\sum\limits_{i=1}^n f(x_i)\Big|\bigg) \leq \frac{C_{k,\mu}}{\sqrt{n}}\,.
$$
The following result shows that a slightly weaker  result holds with overwhelming probability if $n$ is moderately large. 
\begin{remark}[Quadrature with high probability] Using the technique in \cite[Thm.\ 7.1~(ii)]{MoUl21} and the fact that 
$$
    \|f+g\|_{L_2}^2-\|f-g\|_{L_2}^2 = 4\langle f,g\rangle_{L_2}
$$
we can even prove a high probability version of the result above by paying an extra $\log$-term in the bound. Namely, choosing $x_1,\ldots,x_n$ iid at random with respect to the density
$$
    \varrho(x) := \frac{k(x,x)}{\int_\Omega k(x,x)\dint\mu(x)}\quad, \quad x\in \Omega\,,
$$
then we have for all $n \geq n_{k,\mu}$ the bound
$$
 \Prob_{x_1\sim \varrho\dint\mu,...,x_n\sim \varrho\dint\mu}\bigg(  \sup\limits_{\|f\|_{\cH}\leq 1} \Big|\int_\Omega f(x)\dint\mu(x) - \frac{1}{n}\sum\limits_{i=1}^n\frac{f(x_i)}{\varrho(x_i)}\Big| > 
    C'_{k,\mu}\sqrt{r\frac{\log n}{n}} \bigg) \le 4n^{1-r}\,,
$$
where $r$ is chosen greater than $1$ and fixed. 
This result has to be compared to the second statement in \cite[Thm.\ 10.4]{NoWoII}, where a straight-forward concentration bound is deduced from Chebychev's inequality together with the bound on the expectation. 
\end{remark}


\section{Marcinkiewicz-Zygmund equalities}
\label{sec:3}

In this section we use Theorem~\ref{thm:tchak} to deduce $L_p$-MZ equalities
as in~\eqref{eqdef:LpMZequalities} in the most general conceivable setup for even exponents $p\in 2\N$.
We start with yet another lemma on the effective dimension.

\begin{lemma}\label{lem:dimensionMZ}
Let $n\in\N$, $\K\in\{\R,\C\}$, and 
$V\subseteq\K^n$ be a $\K$-vector space of finite dimension. 
Then
 \begin{align*}
 \effdim_{\R}V = \dim_{\K}V \quad\Leftrightarrow\quad V = \overline{V} \,.
 \end{align*}
 Here $\overline{V}$ denotes the $\K$-vector subspace of $\K^n$ obtained from $V$ via element-wise conjugation.
\end{lemma}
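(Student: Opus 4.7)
The plan is to split into the trivial case $\K = \R$, where both sides of the equivalence hold automatically since $\Re$ acts as the identity on $V$ and componentwise conjugation is trivial, and the nontrivial case $\K = \C$.

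For $\K = \C$, my strategy is to express $\effdim_\R V$ in terms of the real subspace $V_\R := V \cap \R^n$. I would apply rank-nullity to the $\R$-linear map $\Re : V \to \R^n$. Its image is $\Re(V)$ by definition, and its kernel equals $V \cap \ii\R^n$, the purely imaginary vectors in $V$; multiplication by $\ii$ identifies this kernel $\R$-linearly with $V_\R$, since $V$ is stable under scalar multiplication by $\ii$. Combined with the identity $\dim_\R V = 2\dim_\C V$, this yields
$$\effdim_\R V \;=\; 2\dim_\C V \,-\, \dim_\R V_\R,$$
so the claim reduces to showing $\dim_\R V_\R = \dim_\C V \Leftrightarrow V = \overline V$.

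For the direction $V = \overline V \Rightarrow \dim_\R V_\R = \dim_\C V$, I would observe that under conjugation invariance every $v \in V$ decomposes as $\Re(v) + \ii\Im(v) = \tfrac12(v+\overline v) + \tfrac{1}{2\ii}(v-\overline v)$, with both summands lying in $V_\R$. Hence $V = V_\R + \ii V_\R$ and $\dim_\C V \le \dim_\R V_\R$, which together with the general bound $\dim_\R V_\R \le \dim_\C V$ (any $\R$-linearly independent family of real vectors remains $\C$-linearly independent in $\C^n$) forces equality. Conversely, given $\dim_\R V_\R = \dim_\C V =: d$, any $\R$-basis of $V_\R$ is a family of $d$ real vectors which is $\C$-linearly independent in $\C^n$, hence a $\C$-basis of $V$; this exhibits $V$ as $\C$-spanned by real vectors and therefore invariant under componentwise conjugation.

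There is no substantial obstacle — the argument is an exercise in bookkeeping between the real and complex structures of $V$ — but one must take care to distinguish $V$'s role as an $\R$-space of dimension $2\dim_\C V$ from its role as a $\C$-space when invoking rank-nullity and when passing between $V_\R$ and its complexification.
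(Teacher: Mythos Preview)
Your proof is correct but takes a different route from the paper. The paper introduces the auxiliary $\C$-space $\widetilde{V} := \Re(V) + \ii\,\Im(V) \supseteq V$ (noting $\Re(V)=\Im(V)$), observes that $\dim_\C \widetilde{V} = \effdim_\R V$, and then shows directly that $\widetilde{V}=V$ is equivalent to $V=\overline{V}$. You instead work ``from below'' with $V_\R = V\cap\R^n$ and obtain the identity $\effdim_\R V = 2\dim_\C V - \dim_\R V_\R$ via rank--nullity, reducing the claim to $\dim_\R V_\R = \dim_\C V \Leftrightarrow V=\overline{V}$. These are dual viewpoints: the paper's $\widetilde{V}$ is the complexification of $\Re(V)$ and contains $V$, while your argument ultimately shows $V$ equals the complexification of $V_\R$. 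Both are elementary; the paper's is slightly more compact since it avoids rank--nullity, while yours makes the role of the real vectors in $V$ more explicit and yields the quantitative formula for $\effdim_\R V$ as a byproduct.

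One cosmetic slip: the displayed equality $\Re(v)+\ii\,\Im(v) = \tfrac12(v+\overline v) + \tfrac{1}{2\ii}(v-\overline v)$ is not literally correct (the right-hand side equals $\Re(v)+\Im(v)$). What you mean is that the separate identities $\Re(v)=\tfrac12(v+\overline v)$ and $\Im(v)=\tfrac{1}{2\ii}(v-\overline v)$ place both $\Re(v)$ and $\Im(v)$ in $V$ when $V=\overline V$; the subsequent reasoning is unaffected.
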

\begin{proof}
The statement is clear if $\K=\R$. To prove the case $\K=\C$, note that in this case $\Re(V)= \Im(V)$.
Further, $\widetilde{V}:=\Re(V) \oplus \ii\Im(V) \supseteq V$ and $\dim_{\C}\widetilde{V} = \dim_{\R}\Re(V) = \effdim_{\R}V$.  
Assuming $V = \overline{V}$, we then have $\Re(V)\subseteq V$ and $\ii\Im(V)=\ii\Re(V)\subseteq V$, whence $\widetilde{V}=V$, which implies $\effdim_{\R}V = \dim_{\C}V$. On the other hand, assuming the latter equality of dimensions, we have $\dim_{\C}\widetilde{V} = \dim_{\C}V$. Due to $\widetilde{V} \supseteq V$, this yields $\widetilde{V} = V$ and thus 
$\overline{V}=\overline{\widetilde{V}}=\widetilde{V}=V$.
\end{proof}

The subsequent result for the case $p=2$ is a direct consequence of Theorem~\ref{thm:tchak} and Lemma~\ref{lem:dimensionMZ}.

\begin{theorem}\label{thm:MZ_product_of_functions} 
    Let $(\Omega,\Sigma,\mu)$ be a positive measure space and $V_n$ an $n$-dimensional subspace of  $L_2(\Omega,\mu;\K)$. Then there exist $N=\dim_{\K}\lin_{\K}\{f\,\overline{g} : f,g\in V_n\}$ points $x_1,\ldots,x_{N}\in \Omega$ together with associated non-negative weights $\mu_1,\ldots,\mu_{N}$  
    such that
    \begin{equation}\label{statement3_1}
        \int_\Omega f(x) \overline{g(x)} \dint\mu(x)
        = \sum_{j=1}^{N} \mu_j f(x_j) \overline{g(x_j)} \quad\text{ for all $f,g\in V_n$}\,.
    \end{equation}
    In case $\K=\R$ we have $N\leq \binom{n+1}{2}$, in case $\K=\C$ we have $N\leq n^2$.
\end{theorem}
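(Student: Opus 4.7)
The plan is to apply Theorem~\ref{thm:tchak} to the auxiliary space
\[
W := \lin_{\K}\big\{ f\,\overline{g} : f,g\in V_n \big\} \,.
\]
Since $V_n\subseteq L_2(\Omega,\mu;\K)$, the Cauchy--Schwarz inequality gives $f\,\overline{g}\in L_1(\Omega,\mu;\K)$ for all $f,g\in V_n$, so $W$ is a finite-dimensional subspace of $L_1(\Omega,\mu;\K)$ and Theorem~\ref{thm:tchak} applies. It produces $N = \effdim_{\R} W$ nodes $x_1,\dots,x_N\in\Omega$ and non-negative weights $\mu_1,\dots,\mu_N$ that discretize the integration functional on all of $W$. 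Specializing the resulting identity to the generating elements $f\,\overline{g}$ yields \eqref{statement3_1}.

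The point that requires attention is identifying $N=\effdim_{\R} W$ with $\dim_{\K} W$. For $\K=\R$ this is automatic. For $\K=\C$ I would observe that $W$ is closed under complex conjugation, because $\overline{f\,\overline{g}}=g\,\overline{f}\in W$, so $W=\overline{W}$. Then the same argument used in Lemma~\ref{lem:dimensionMZ} applies in the function-space setting: $\Re(f)=(f+\overline{f})/2\in W$ for every $f\in W$, which gives the real-linear decomposition $W=\Re(W)\oplus\ii\,\Re(W)$, and therefore $\effdim_{\R} W=\dim_{\R}\Re(W)=\dim_{\C} W$.

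For the stated dimension bounds I would simply produce generating sets. Fix a $\K$-basis $\varphi_1,\dots,\varphi_n$ of $V_n$. In the real case, $W$ is spanned by the symmetric products $\{\varphi_i\varphi_j : 1\le i\le j\le n\}$, giving $\dim_{\R} W\le \binom{n+1}{2}$. In the complex case, $W$ is spanned by $\{\varphi_i\,\overline{\varphi_j} : 1\le i,j\le n\}$, giving $\dim_{\C} W\le n^2$. Both counts are crude and require no additional argument.

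The only genuine obstacle is the $\K=\C$ effective-dimension step, but once one spots that $W$ is conjugation-closed it follows by the lemma already in the paper. The whole strategy is really a clean reduction of the quadratic-form identity on $V_n$ to the linear Tchakaloff identity on the product space $W$; no new discretization technology is needed.
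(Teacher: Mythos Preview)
Your proof is correct and follows essentially the same route as the paper: apply Theorem~\ref{thm:tchak} to the product space $W=V_n^{(2)}:=\lin_{\K}\{f\,\overline{g}:f,g\in V_n\}\subseteq L_1(\Omega,\mu;\K)$, then use that $W=\overline{W}$ together with Lemma~\ref{lem:dimensionMZ} to identify $\effdim_{\R}W$ with $\dim_{\K}W$. You are in fact a little more explicit than the paper in justifying $W\subseteq L_1$ via Cauchy--Schwarz and in spelling out why the argument of Lemma~\ref{lem:dimensionMZ} (stated there for subspaces of $\K^n$) carries over to function spaces.
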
 
\begin{proof} 
Define $V_n^{(2)}:=\lin_{\K}\{f\,\overline{g} : f,g\in V_n\}\subseteq L_1(\Omega,\mu;\K)$ and note that $V_n^{(2)}$ satisfies $V^{(2)}_n=\overline{V}^{(2)}_n$. Applying Theorem~\ref{thm:tchak} to $V_n^{(2)}$ yields $N=\effdim_{\R}V_n^{(2)}$ points $x_1,\ldots,x_N$ and weights $\mu_1,\ldots,\mu_N$ such that 
\[
\int_{\Omega} h(x) \dint\mu(x) = \sum_{j=1}^{N} \mu_j h(x_j)
\]
holds for all $h\in V_n^{(2)}$. In view of Lemma~\ref{lem:dimensionMZ}, further $N=\dim_{\K}\lin_{\K}\{f\,\overline{g} : f,g\in V_n\}$. The assertion follows.
The upper bounds on $N$ in case $\K=\R$ or $\K=\C$ are obvious.  
\end{proof} 

The corollary below 
is immediate. It provides $L_2$-MZ equalities in full generality for finite-dimensional subspaces of $L_2(\Omega,\mu;\K)$.

\begin{corollary}\label{cor:exactmz} 
     Let $(\Omega,\Sigma,\mu)$ be a positive measure space and $V_n\subseteq L_2(\Omega,\mu;\K)$ with $\dim_{\K} V_n = n$. 
     Then there exist 
     $N$ points $x_1,\ldots,x_{N}\in\Omega$, with $N$ as in Theorem~\ref{thm:MZ_product_of_functions}, together with non-negative weights $\mu_1,\ldots,\mu_{N}$
     such that 
    \begin{equation}\label{exactmz_formula}
        \int_\Omega |f(x)|^2 \dint\mu(x) = \sum_{j=1}^{N} \mu_j |f(x_j)|^2
        \quad\text{ for all $f\in V_n$}
         \,.
    \end{equation}
\end{corollary}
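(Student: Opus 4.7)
The plan is to deduce this corollary directly from Theorem~\ref{thm:MZ_product_of_functions} by specializing the bilinear identity to the diagonal. The observation is that $|f(x)|^2 = f(x)\overline{f(x)}$ pointwise on $\Omega$, so~\eqref{exactmz_formula} is precisely~\eqref{statement3_1} with $g$ replaced by $f$.

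First I would apply Theorem~\ref{thm:MZ_product_of_functions} to the given subspace $V_n\subseteq L_2(\Omega,\mu;\K)$, obtaining nodes $x_1,\ldots,x_N\in\Omega$ and non-negative weights $\mu_1,\ldots,\mu_N$ with
\begin{equation*}
\int_\Omega f(x)\overline{g(x)}\dint\mu(x) = \sum_{j=1}^N \mu_j f(x_j)\overline{g(x_j)} \quad\text{ for all $f,g\in V_n$}\,,
\end{equation*}
where $N$ is bounded as stated in Theorem~\ref{thm:MZ_product_of_functions}. Note that the condition $f,g\in V_n\subseteq L_2(\Omega,\mu;\K)$ indeed ensures $f\overline{g}\in L_1(\Omega,\mu;\K)$ by the Cauchy--Schwarz inequality, so the hypothesis of Theorem~\ref{thm:MZ_product_of_functions} is satisfied.

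Then I would simply take $g=f$, which turns $f(x)\overline{g(x)}$ into $|f(x)|^2$ both under the integral and in the finite sum, yielding~\eqref{exactmz_formula} for every $f\in V_n$. Since the statement is an immediate specialization, no essential obstacle arises; the only point worth noting is that the number $N$ and the node-weight set depend on $V_n$ through the auxiliary space $V_n^{(2)}=\lin_{\K}\{f\,\overline g:f,g\in V_n\}$ introduced in the proof of Theorem~\ref{thm:MZ_product_of_functions}, and not on the individual function $f$ appearing in~\eqref{exactmz_formula}. This uniformity in $f$ is exactly what the bilinear formulation of Theorem~\ref{thm:MZ_product_of_functions} delivers.
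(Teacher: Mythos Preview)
Your argument is correct and is exactly the approach the paper takes: the corollary is stated as immediate from Theorem~\ref{thm:MZ_product_of_functions}, obtained by setting $g=f$ in~\eqref{statement3_1}.
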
 

\begin{remark}
Theorem~\ref{thm:MZ_product_of_functions} and Corollary~\ref{cor:exactmz} are  
equivalent statements. By polarization, it is possible to 
retrieve Theorem~\ref{thm:MZ_product_of_functions} from Corollary~\ref{cor:exactmz}, see e.g.~\cite[Cor.~4.2]{BKPSU24}.  
\end{remark}

We finally deduce $L_p$-MZ equalities for even $p\in2\N$ with the same technique as in~\cite[Thm~3.1]{DaPrTeTi19} and~\cite[Sec.~4]{BKPSU24}.
We employ a basis of $V_n = \lin_{\K}\{\varphi_1, \dots, \varphi_n\}$ and define the space
    \begin{equation}\label{eqdef:W_n}
        W_n
        := \lin_{\K}\Big\{ \varphi_1^{k_1} \cdots \varphi_n^{k_n} : k_1,\ldots,k_n\in \mathds N_0,\, k_1+\ldots+k_n = \frac{p}{2} \Big\} \subseteq L_2(\Omega,\mu;\K) \,.
    \end{equation}
    Counting the number of weak compositions of the integer $\tfrac{p}{2}$ into $n$ parts, we get for the dimension
    \begin{align*}
        \dim_{\K}W_n
        \le \Big| \Big\{ k_1,\ldots,k_n\in \mathds N_0 : \sum_{j=1}^{n} k_j = \frac{p}{2} \Big\} \Big| 
        = \binom{n+\frac{p}{2}-1}{\frac{p}{2}} \,.
    \end{align*}
    With this we are ready to state and prove the main theorem.
    
\begin{theorem}\label{even_MZ} 
    Let $(\Omega,\Sigma,\mu)$ be a positive measure space and $V_n\subseteq L_p(\Omega,\mu;\K)$ with $\dim_{\K} V_n = n$ and $p$ even.
    Let further $W_n$ be the space in~\eqref{eqdef:W_n}. Then there exist $N=\dim_{\K}\lin_{\K}\{f\,\overline{g} : f,g\in W_n\}$ points $x_1,\ldots,x_{N}\in\Omega$ and non-negative weights $\mu_1,\ldots,\mu_{N}$ such that
    \begin{equation}\label{eq:exactLpMZ}
        \int_\Omega |f(x)|^p \dint\mu(x) = \sum_{j=1}^{N} \mu_j |f(x_j)|^p
        \quad\text{ for all $f\in V_n$}\,.
    \end{equation}
In case $\K=\R$ we have $N\leq \binom{n+p-1}{p}$,  if $\K=\C$ we have $N\leq \binom{n+\frac{p}{2}-1}{\frac{p}{2}}^2$.
\end{theorem}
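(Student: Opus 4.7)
The plan is to reduce this to the $L_2$-case, which is already handled by Theorem~\ref{thm:MZ_product_of_functions} applied to the auxiliary space $W_n$. The key observation is the factorization $|f(x)|^p = f(x)^{p/2}\,\overline{f(x)^{p/2}}$, which is valid precisely because $p$ is even.

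First, I would fix a $\K$-basis $\varphi_1,\ldots,\varphi_n$ of $V_n$ and show that for any $f=\sum_{j=1}^n c_j\varphi_j \in V_n$ the function $f^{p/2}$ belongs to $W_n$. This is immediate from the multinomial expansion
\[
f^{p/2} = \sum_{\substack{k_1,\ldots,k_n\in\N_0\\ k_1+\cdots+k_n=p/2}} \binom{p/2}{k_1,\ldots,k_n} c_1^{k_1}\cdots c_n^{k_n}\, \varphi_1^{k_1}\cdots\varphi_n^{k_n}\,,
\]
every summand on the right being, by definition, an element of $W_n$. In particular, since $V_n\subseteq L_p(\Omega,\mu;\K)$, each $f^{p/2}$ lies in $L_2(\Omega,\mu;\K)$, so $W_n$ is a well-defined finite-dimensional subspace of $L_2(\Omega,\mu;\K)$.

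Second, I would apply Theorem~\ref{thm:MZ_product_of_functions} to the space $W_n$. It yields $N=\dim_\K\lin_\K\{g\,\overline{h}:g,h\in W_n\}$ points $x_1,\ldots,x_N\in\Omega$ and non-negative weights $\mu_1,\ldots,\mu_N$ such that
\[
\int_\Omega g(x)\overline{h(x)}\dint\mu(x) = \sum_{j=1}^{N}\mu_j\, g(x_j)\overline{h(x_j)} \quad\text{for all }g,h\in W_n\,.
\]
Specializing to $g=h=f^{p/2}$ for arbitrary $f\in V_n$, the factorization $|f|^p=f^{p/2}\,\overline{f^{p/2}}$ gives precisely~\eqref{eq:exactLpMZ}.

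Third, to verify the stated cardinality bounds, I would just count dimensions of $W_n$ and its product space. As noted in the excerpt, $\dim_\K W_n\le\binom{n+p/2-1}{p/2}$, so in the complex case the trivial bound $\dim_\C\lin_\C\{g\overline{h}:g,h\in W_n\}\le(\dim_\C W_n)^2$ yields $N\le\binom{n+p/2-1}{p/2}^2$. In the real case, conjugation is trivial, so products $gh$ with $g,h\in W_n$ lie in the span of monomials $\varphi_1^{m_1}\cdots\varphi_n^{m_n}$ with $m_1+\cdots+m_n=p$, giving $N\le\binom{n+p-1}{p}$. There is no serious obstacle in this argument; the only subtlety is spotting the factorization and checking that $f^{p/2}\in W_n$, after which the result falls out of Theorem~\ref{thm:MZ_product_of_functions} without further work.
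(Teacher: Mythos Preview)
Your proof is correct and follows essentially the same route as the paper: apply the $L_2$-result (the paper uses Corollary~\ref{cor:exactmz}, you use the equivalent Theorem~\ref{thm:MZ_product_of_functions}) to $W_n$, then specialize to $g=h=f^{p/2}$, and bound $N$ by counting monomials. Your multinomial expansion making $f^{p/2}\in W_n$ explicit is a nice touch the paper leaves implicit; the only small looseness is that ``$f^{p/2}\in L_2$ for all $f\in V_n$'' does not by itself show every generator $\varphi_1^{k_1}\cdots\varphi_n^{k_n}$ lies in $L_2$---that follows instead from the generalized H\"older inequality applied to $p/2$ factors in $L_p$.
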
 

\begin{proof} 
     Applying Corollary~\ref{cor:exactmz} to $W_n$ gives $N=\dim_{\K}\lin_{\K}\{f\,\overline{g} : f,g\in W_n\}$ points $x_1,\ldots,x_{N}\in\Omega$ and non-negative weights $\mu_1,\ldots,\mu_{N}$ such that
    \begin{equation*}
        \int_\Omega |f(x)|^2 \dint\mu(x) = \sum_{j=1}^{N} \mu_j |f(x_j)|^2
        \quad\text{ for all $f\in W_n$}
         \,.
    \end{equation*}
    In particular, this equation holds for any $f^{p/2}$ with $f\in V_n$, which is the assertion. 

    Let us put $W_n^{(2)}:=\lin_{\K}\{f\,\overline{g} : f,g\in W_n\}$.
    In case $\K=\C$, we estimate 
    \[
    \dim_{\C}W_n^{(2)} \le (\dim_{\C}W_n)^2   \le  \binom{n+\frac{p}{2}-1}{\frac{p}{2}}^2 \,.
    \]
    In case $\K=\R$, we have the bound
    \begin{align*}
        \dim_{\R}W^{(2)}_n
        &= \dim_{\R}\lin_{\R}\Big\{ \varphi_1^{k_1} \cdots \varphi_n^{k_n} : k_1,\ldots,k_n\in \mathds N_0,\, \sum_{j=1}^{n} k_j = p \Big\} \\
        &\le \Big| \Big\{ k_1,\ldots,k_n\in \mathds N_0 : \sum_{j=1}^{n} k_j = p \Big\} \Big| 
        = \binom{n+p-1}{p} \,. \qedhere
    \end{align*}
\end{proof} 

Utilizing Remark~\ref{rem:finite_measure}, we get companion results with an extra node where the weights sum up to $1$.

\begin{remark}
If $\mu(\Omega)<\infty$, allowing $N+1$ nodes and weights in the formulas~\eqref{statement3_1}, \eqref{exactmz_formula}, or~\eqref{eq:exactLpMZ}, it is possible to 
ensure the extra condition $\mu_1+\ldots+\mu_{N+1} = \mu(\Omega)$.
\end{remark}

We end with a comment on the actual number of evaluation points needed for MZ equalities.

\begin{remark}
The number $N$ given in Theorems~\ref{thm:MZ_product_of_functions} and~\ref{even_MZ} or in Corollary~\ref{cor:exactmz}
is usually just an upper bound for the actual number of evaluations needed. 
On the lower end, the number of evaluations can never be below $n=\dim_{\K}V_n$. The space $V_{n,0}$ from Example~\ref{ex:Nopt} is an example where this minimal number of nodes suffices.
\end{remark}



\section{Exact weighted frame subsampling}
\label{sec:5}
We begin this section with a profound result on the existence of positive measures such that a given finite system of functions becomes orthonormal. It can be immediately drawn from Proposition~\ref{prop:ContCarSub} and should be compared with~\cite[Thm.~3.1]{BKPSU24}, see also Remark~\ref{rem:discr_orth}.

\begin{proposition}\label{prop:discr_orth} 
    Let $\Omega$ be a set and $\varphi_1,\dots,\varphi_n\colon \Omega\to\K$ functions. Let further $N 
    = \dim_{\K}\lin_{\K}\{\varphi_k\overline{\varphi_l}:\, k,l=1,\ldots,n \}$. The following assertions are equivalent.
    \begin{enumerate}[label=\normalfont(\roman*)]
    \item
        We have
        \begin{equation*}
            \bI_n \in \cone \Big\{
            (\varphi_k(x)\overline{\varphi_l(x)}
            )_{k,l=1}^{n}
            \colon\,x\in \Omega \Big\}\,.
        \end{equation*}
     \item
        There exists a positive measure $\mu$ and an associated $\sigma$-algebra $\Sigma$ of measurable sets in $\Omega$ such that the system
        $\{\varphi_j\}_{j=1}^{n}$
        is orthonormal with respect to $\mu$, i.e.\
        \begin{equation}\label{eq:ortho_cond}
            \int_\Omega \varphi_k(x)\overline{\varphi_l(x)} \dint\mu(x)
            = \delta_{k,l}\,, \quad k,l=1,\ldots, n\,.
        \end{equation}       
    \item
        There exist $N$ points $x_1,\ldots,x_{N}\in \Omega$  and non-negative weights $\mu_1,\ldots,\mu_{N}$ such that
            $\{\varphi_j\}_{j=1}^{n}$ is orthonormal with respect to the discrete measure $\sum_{j=1}^{N} \mu_j \delta_{x_j}$, i.e.
            \begin{equation*}
                \sum\limits_{j=1}^{N} \mu_j\varphi_k(x_i)\overline{\varphi_l(x_j)}
                = \delta_{k,l}\,, \quad  k,l=1,\ldots, n \,.
            \end{equation*}
    \end{enumerate}
\end{proposition}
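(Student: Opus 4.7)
The plan is to establish the cycle $\text{(iii)}\Rightarrow\text{(ii)}\Rightarrow\text{(i)}\Rightarrow\text{(iii)}$. The implication $\text{(iii)}\Rightarrow\text{(ii)}$ is immediate by taking $\mu=\sum_{j=1}^{N}\mu_j\delta_{x_j}$. For $\text{(ii)}\Rightarrow\text{(i)}$, I would rewrite the orthonormality relation~\eqref{eq:ortho_cond} in matrix form as $\bI_n=\int_\Omega M(x)\dint\mu(x)$, where $M(x):=(\varphi_k(x)\overline{\varphi_l(x)})_{k,l=1}^n=\bvarphi(x)\bvarphi(x)^\ast$ and $\bvarphi(x):=(\varphi_1(x),\ldots,\varphi_n(x))^\top$. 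Then $\bI_n$ lies in $\intcone\{M(x):x\in\Omega\}$, which equals $\cone\{M(x):x\in\Omega\}$ by Lemma~\ref{lem:intcone_eq_cone}, giving (i).

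For the remaining direction $\text{(i)}\Rightarrow\text{(iii)}$, the strategy is to invoke Carathéodory's theorem (Theorem~\ref{CarClassic}) to realise $\bI_n$ as a non-negative combination of at most $N':=\dim_\R\lin_\R\{M(x):x\in\Omega\}$ matrices $M(x_j)$; reading off the points $x_j$ and the coefficients $\mu_j\ge0$ will then yield the discretisation in (iii). The proposition therefore hinges on identifying $N'$ with the stated value $N=\dim_\K\lin_\K\{\psi_{kl}:k,l=1,\ldots,n\}$, where $\psi_{kl}:=\varphi_k\overline{\varphi_l}$. Applying Lemma~\ref{lem:column_rank} to the $n^2$ functions $\psi_{kl}$, packaged into $\bpsi(x)\in\K^{n^2}$ (the entries of $M(x)$), produces $N=\dim_\K\lin_\K\{\bpsi(x):x\in\Omega\}$. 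In the case $\K=\R$ this immediately equals $N'$; in the case $\K=\C$ one exploits that the $M(x)$ are Hermitian to conclude $\dim_\R\lin_\R\{M(x)\}=\dim_\C\lin_\C\{M(x)\}$, for example by noting that any $\C$-linear dependence $\sum_j c_j M(x_j)=0$ yields, upon taking adjoints, $\sum_j\overline{c_j}M(x_j)=0$, whose real and imaginary parts exhibit $\Re(c_j)$ and $\Im(c_j)$ as coefficients of two $\R$-linear dependences.

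The step I expect to be the main obstacle is exactly this dimension reconciliation in the complex case: Carathéodory naturally produces a bound in real dimension, while the proposition is phrased via a complex dimension, so bridging the two relies essentially on the Hermitian structure. A more conceptual alternative route would be to apply Lemma~\ref{lem:dimensionMZ} after verifying that $\lin_\C\{\bpsi(x):x\in\Omega\}\subseteq\C^{n^2}$ is stable under componentwise conjugation; this stability does hold since $\overline{\psi_{kl}}=\psi_{lk}$, so conjugation of any $\bpsi(x)$ amounts to a fixed real permutation of its coordinates, which preserves the $\C$-span.
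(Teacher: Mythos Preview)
Your proof is correct and follows essentially the same route as the paper's, which runs the cycle in the opposite order $\text{(i)}\Rightarrow\text{(ii)}\Rightarrow\text{(iii)}\Rightarrow\text{(i)}$ and packages your Lemma~\ref{lem:intcone_eq_cone}--plus--Carath\'eodory step into a single invocation of Proposition~\ref{prop:ContCarSub}. Your explicit reconciliation of $\dim_{\R}\lin_{\R}\{M(x)\}$ with $\dim_{\K}\lin_{\K}\{\varphi_k\overline{\varphi_l}\}$ via the Hermitian structure (or, as you note, via Lemma~\ref{lem:dimensionMZ}) is a point the paper's proof passes over silently, so your write-up is in fact the more careful of the two.
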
 
\begin{proof}
The implication (i) $\Rightarrow$ (ii) is clear since, under assumption (i), there even exists a discrete positive measure $\mu$ such that \eqref{eq:ortho_cond} holds true. Concretely, we can choose $\mu$ as the discrete measure $\sum_{j=1}^{M} \nu_j\delta_{x_j}$, where the points $x_1,\ldots,x_M\in\Omega$ and weights $\nu_1,\ldots,\nu_M\ge 0$ come from the conic expansion $\bI_n=\sum_{j=1}^{M} \nu_j (\varphi_k(x_j)\overline{\varphi_l(x_j)})_{k,l=1}^{n}$. The $\sigma$-algebra $\Sigma$ of measurable sets is here the power set $\cP(\Omega)$.

To deduce (ii) $\Rightarrow$ (iii) we can invoke
Proposition~\ref{prop:ContCarSub}~(i), the conic version of Carathéodory-Tchakaloff subsampling.

(iii) $\Rightarrow$ (i) is clear by the definition of the cone.
\end{proof}

\begin{remark}\label{rem:discr_orth}
If in Proposition~\ref{prop:discr_orth}~(i) the convex hull 
\[
\conv \Big\{
            (\varphi_k(x)\overline{\varphi_l(x)}
            )_{k,l=1}^{n}
            \colon\,x\in \Omega \Big\}
\]
is used instead of the conic hull
the measure $\mu$ in (ii) can be realized as a probability measure and in (iii), allowing
$N+1$ points and weights, the additional condition $\mu_1 + \ldots + \mu_{N+1}=1$ can be fulfilled, using the convex version of Carathéodory-Tchakaloff subsampling, Proposition~\ref{prop:ContCarSub}~(ii).
\end{remark}

In the remainder of this section we will analyze the ramifications of Proposition~\ref{prop:discr_orth} for the discretization and scalability of frames in finite-dimensional Hilbert spaces. We will use the following definition.

\begin{definition}\label{def:mu-frame}
Let $\cH$ be a finite-dimensional Hilbert space
and $(\Omega,\Sigma,\mu)$ a positive measure space.
We call a measurable 
function $\bvarphi:\Omega\to\cH$ a
\emph{$\mu$-frame} for $\cH$ if there exist constants $0<A\le B<\infty$ such that
\begin{align}\label{K1FS}
A \|f\|^2 \le \int_\Omega |\langle f,\bvarphi(x) \rangle|^2 \dint\mu(x) \le B \|f\|^2 \quad\text{ for all }f\in\cH \,.
\end{align}
The largest $A$ and smallest $B$ satisfying~\eqref{K1FS} is called the \emph{lower} and \emph{upper frame bound}, respectively.
A $\mu$-frame is called a \emph{Parseval $\mu$-frame} if the constants in~\eqref{K1FS} can be chosen as $A=B=1$.
\end{definition}

Note that any $\mu$-frame $\bvarphi$ is necessarily a function in $L_2(\Omega,\mu;\cH)$, see~\eqref{eqdef:Lp_space}. Otherwise, the frame condition \eqref{K1FS} cannot hold for all $f\in\cH$.

\begin{remark}
For a $\mu$-frame $\bvarphi:\Omega\to\cH$ we will often write $\bvarphi_x$ for the vector $\bvarphi(x)\in\cH$ and then, accordingly, use the notation $\{\bvarphi_x\}_{x\in\Omega}$ for the same $\mu$-frame.
\end{remark}

There is an abundance of literature on frames, see e.g.\ the books~\cite{CasaKuty_12} or \cite{OleChrist2016} for an introductory exposition. First introduced by Duffin and Schaeffer~\cite{duffin1952class} as discrete systems in Hilbert space generalizing Riesz bases, the concept was later extended by Ali, Antoine, and Gazeau~\cite{AAG93} 
(and independently by Kaiser~\cite{Kaiser_1994}) to also allow continuous versions.
Our Definition~\ref{def:mu-frame} above of a $\mu$-frame
reproduces the notion of a continuous frame from~\cite[Def.~1.2]{FS19} 
in a finite-dimensional setting. 

Frames in finite dimensions are
usually referred to as \emph{finite-dimensional frames}.
In particular, this notion comprises 
\emph{finite frames}, i.e.\ frames consisting of finitely many elements,
due to the fact that finite systems can only be frames in spaces of finite dimension. 
A $\mu$-frame is considered \emph{discrete} if $\mu$ is such that there is a countable set $S\subseteq\Omega$ with $\mu(\Omega\setminus S)=0$. In this sense,
$\mu$-frames with a countable index $\Omega$ are particular instances of discrete $\mu$-frames. Discrete frames in the classical sense, as defined by Duffin and Schaeffer, correspond to $\mu$-frames with counting measure $\mu$ and countable index set $\Omega$.

We will later deal with $\mu$-frames, where $\mu$ is a discrete measure on $\Omega$. Recall that a positive measure $\mu$ on $\Omega$ is considered \emph{discrete} if there is an at most countable set $S\subseteq\Omega$, of which each singleton is $\mu$-measurable, such that $\mu(\Omega\setminus S)=0$. These kind of $\mu$-frames are hence also instances of discrete $\mu$-frames.

An important notion, we will need in this section, is the \emph{scalability} of frames. It
was introduced in~\cite{KUTYNIOK20132225} for classical frames.
More results on scalability can be found in~\cite{Cahill_Chen_2013, Kutyniok_Okoudjou_Philipp,Casazza_Chen_2017,CasaCarliTran23}. 
We define here the notion of
\emph{discrete scalability} for $\mu$-frames.

\begin{definition}\label{def:discr_scalable}
We call a $\mu$-frame $\bvarphi:\Omega\to\cH$ \emph{discretely scalable} if there exist nodes $\{x_j\}_{j\in\N}$ and scalars $\{s_j\}_{j\in\N}$ such that $\{s_j\bvarphi(x_j)\}_{j\in\N}$ is a discrete Parseval frame for $\cH$.
\end{definition}

\begin{remark}
One can restrict $\{s_j\}_{j\in\N}$ to non-negative scalars in Definition~\ref{def:discr_scalable} without changing the notion.
\end{remark}

We next recall some basics from frame theory.
For a given $\mu$-frame $\bvarphi$, the operator $T_{\bvarphi}$ mapping vectors $\bv\in\cH$ to the function $x\mapsto \langle \bv,\bvarphi(x)\rangle$ is called \emph{analysis operator} of $\bvarphi$.
It is a mapping from $\cH$ to $L_2(\Omega,\mu;\K)$.
Its Hilbert adjoint $T_{\bvarphi}^\ast(\mu):L_2(\Omega,\mu;\K)\to\cH$ in the space $L_2(\Omega,\mu;\K)$ is called the \emph{synthesis operator} of $\bvarphi$ with respect to $\mu$. 
It depends on $\mu$ and can be represented by the integral
\begin{align*}
T^\ast_{\bvarphi}(\mu) f = \int_{\Omega}  f(x)\bvarphi(x)  \dint\mu(x) \quad,\quad\, f\in L_2(\Omega,\mu;\K) \,.
\end{align*}
Due to Hölder, the integrand $x\mapsto f(x)\bvarphi(x)$ is guaranteed to belong to $L_1(\Omega,\mu;\cH)$.
The \emph{frame operator} $S_{\bvarphi}(\mu):=T^\ast_{\bvarphi}(\mu)\circ T_{\bvarphi}$ is the composition of the synthesis and the analysis operator and also depends on the choice of $\mu$. It is an element of $\cL(\cH)$, the space of bounded linear automorphisms of $\cH$.
In addition, it is positive definite Hermitian and has the integral representation
\begin{align*}
S_{\bvarphi}(\mu)   = \int_{\Omega}  \bvarphi(x)\bvarphi(x)^\ast  \dint\mu(x) \,.
\end{align*}
It is well-known that a $\mu$-frame is Parseval if and only if $S_{\bvarphi}(\mu)$ acts as identity $\operatorname{Id}_{\cH}$ on $\cH$.
In general, for a $\mu$-frame $\bvarphi$ with frame bounds $A$ and $B$, it holds
\begin{align}\label{eq:spectral_frameop}
A \operatorname{Id}_{\cH} \le  S_{\bvarphi}(\mu) \le B \operatorname{Id}_{\cH} \,,
\end{align}
where `$\le$' means here the 
Loewner order on positive semi-definite Hermitian operators, i.e.
\[
P_1 \le P_2 \quad\Leftrightarrow\quad P_2-P_1 \,\in\, \mathcal{C}_{\ge0}:=\big\{ Q \in\cL(\cH) ~:~ Q^\ast=Q \,,\: Q\text{ positive semi-definite} \big\} \,.
\]

The following lemma is well-known, see e.g.~\cite[Thm.~2.2]{CasaCarliTran23}
for a discrete version.

\begin{lemma}\label{lem:ortho_equiv_Pars}
Let $\bvarphi:\Omega\to\K^n$ be a map with component functions $\varphi_1,\ldots,\varphi_n:\Omega\to\K$.
For a measure $\mu$ on $\Omega$, the following properties are equivalent:
\begin{enumerate}[label=\normalfont(\roman*)]
\item The system $\{\varphi_j\}_{j=1}^{n}$ is orthonormal with respect to $\mu$.
\item
   The family $\{\bvarphi(x)\}_{x\in\Omega}$ is a Parseval $\mu$-frame in $\K^n$.
\end{enumerate}
\end{lemma}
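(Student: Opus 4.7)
The plan is to identify the matrix of the frame operator $S_{\bvarphi}(\mu)$ (in the standard basis of $\K^n$) with the Gramian of the components $\varphi_1,\ldots,\varphi_n$ with respect to $L_2(\Omega,\mu;\K)$, and then to invoke the standard characterization, already recalled just above the lemma, that a $\mu$-frame is Parseval exactly when its frame operator equals $\operatorname{Id}_{\cH}$.

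Before this, I would check the measurability and $L_2$-integrability assumptions entering Definition~\ref{def:mu-frame}. Assuming (i), each $\varphi_j$ lies in $L_2(\Omega,\mu;\K)$ with $\|\varphi_j\|_{L_2(\mu)}=1$, so by the criterion recalled in Section~1 for finite-dimensional target Hilbert spaces, $\bvarphi\in L_2(\Omega,\mu;\K^n)$; in particular $\bvarphi$ is measurable, as required. Conversely, assuming (ii), the defining frame inequality forces $\bvarphi\in L_2(\Omega,\mu;\K^n)$, and then each $\varphi_j=\langle\bvarphi,e_j\rangle_{\K^n}$ lies in $L_2(\Omega,\mu;\K)$ by the same criterion.

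With this in place, writing $e_1,\ldots,e_n$ for the standard basis of $\K^n$, the integral representation of $S_{\bvarphi}(\mu)$ yields
\begin{align*}
\langle S_{\bvarphi}(\mu)e_l,e_k\rangle_{\K^n}
&=\int_\Omega \langle\bvarphi(x)\bvarphi(x)^\ast e_l,e_k\rangle_{\K^n}\dint\mu(x)\\
&=\int_\Omega \varphi_k(x)\overline{\varphi_l(x)}\dint\mu(x)=\langle\varphi_k,\varphi_l\rangle_{L_2(\mu)},
\end{align*}
so that the matrix of $S_{\bvarphi}(\mu)$ in the standard basis is precisely the Gramian of $\{\varphi_j\}_{j=1}^n$. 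Condition (i) thus reads exactly $S_{\bvarphi}(\mu)=\operatorname{Id}_{\K^n}$, which is in turn equivalent to (ii) by the Parseval characterization.

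I do not anticipate any substantial obstacle; the only mildly delicate point is the measurability and integrability bookkeeping just discussed, needed to ensure that $\bvarphi$ really fits the framework of Definition~\ref{def:mu-frame} before the operator computation is invoked.
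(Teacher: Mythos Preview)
Your proof is correct and follows essentially the same approach as the paper: both conditions are shown to be equivalent to $S_{\bvarphi}(\mu)=\bI_n$, using the integral representation $S_{\bvarphi}(\mu)=\int_\Omega\bvarphi(x)\bvarphi(x)^\ast\dint\mu(x)$ and the Parseval characterization. The paper's proof is more terse, simply noting that both (i) and (ii) amount to this identity; your version makes the Gramian computation explicit and adds the measurability/integrability bookkeeping, which is a welcome bit of care but not a different idea.
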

\begin{proof}
Let $\bI_n$ denote the unit matrix in $\K^{n\times n}$, canonically representing the identity operator $\operatorname{Id}_{\K^n}$. According to the above discussion, both conditions (i) and (ii) are equivalent to
\begin{gather*}
S_{\bvarphi}(\mu) =  \int_{\Omega}  \bvarphi(x)\bvarphi(x)^\ast  \dint\mu(x) = \bI_n \,. \qedhere
\end{gather*} 
\end{proof}

In view of the equivalence in Lemma~\ref{lem:ortho_equiv_Pars}, Proposition~\ref{prop:discr_orth} can be reformulated
to give a characterization, when a collection of vectors $\{\bvarphi_x\}_{x\in\Omega}$ in $\K^n$ can be made a Parseval $\mu$-frame by choice of a suitable measure $\mu$.
This reformulation is the central result of this section. It also applies to general Hilbert spaces of finite dimension since every $n$-dimensional Hilbert space $\cH$ 
is isomorphic to $\K^n$.

\begin{theorem}\label{thm:discr_orth} 
    Let $\{\bvarphi_x\}_{x\in\Omega}$ be a family of vectors in $\K^n$, without any specific assumptions on the index set $\Omega$, and let $\bvarphi:\Omega\to\K^n$ be the function defined by assigning $x\mapsto\bvarphi_x$. 
    The following assertions are equivalent.
    \begin{enumerate}[label=\normalfont(\roman*)]
    \item
        We have
        \begin{equation}\label{eq:cone-condition}
            \bI_n \in \cone \Big\{
            \bvarphi_x\bvarphi_x^\ast
            \colon\,x\in \Omega \Big\}\,.
        \end{equation}
     \item
        There exists a positive measure $\mu$ on $\Omega$ such that $\bvarphi\in L_2(\Omega,\mu;\K^n)$ and $\bvarphi$ is a Parseval $\mu$-frame for $\K^n$.
    \item
        There exists $N \le \dim_{\K}\lin_{\K}\{\bvarphi_x\bvarphi_x^\ast \colon\,x\in \Omega\}$ and points $x_1,\ldots,x_{N}\in \Omega$ and weights $\mu_1,\ldots,\mu_{N}>0$ such that
            $\bvarphi$ is a Parseval $\widehat{\mu}$-frame with respect to the discrete measure $\widehat{\mu}=\sum_{j=1}^{N} \mu_j \delta_{x_j}$.
    \end{enumerate}
\end{theorem}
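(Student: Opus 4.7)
The plan is to deduce this theorem as a direct reformulation of Proposition~\ref{prop:discr_orth} in frame-theoretic language, with Lemma~\ref{lem:ortho_equiv_Pars} serving as the bridge.

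First I would set up notation by writing $\bvarphi(x) = (\varphi_1(x), \ldots, \varphi_n(x))^\top$ with scalar component functions $\varphi_j: \Omega \to \K$. Then the rank-one matrix $\bvarphi_x \bvarphi_x^\ast$ is exactly $(\varphi_k(x) \overline{\varphi_l(x)})_{k,l=1}^n$, so condition~(i) of the theorem is literally identical to condition~(i) of Proposition~\ref{prop:discr_orth}. By Lemma~\ref{lem:ortho_equiv_Pars}, for any positive measure $\mu$ on $\Omega$ the orthonormality of $\{\varphi_j\}_{j=1}^n$ with respect to $\mu$ is equivalent to $\bvarphi$ being a Parseval $\mu$-frame for $\K^n$, and this translates conditions~(ii) and~(iii) of Proposition~\ref{prop:discr_orth} into the corresponding conditions~(ii) and~(iii) of our theorem. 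The $L_2$-membership appearing in~(ii) is automatic for any $\mu$-frame, as noted after Definition~\ref{def:mu-frame}; the strict positivity $\mu_j > 0$ in~(iii) is achieved by discarding those nodes with vanishing weight from the representation produced by Proposition~\ref{prop:discr_orth}~(iii).

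The only point requiring attention is matching the stated dimension bound on $N$: Proposition~\ref{prop:discr_orth} gives $N = \dim_\K \lin_\K\{\varphi_k \overline{\varphi_l} : k,l = 1, \ldots, n\}$, whereas the theorem claims $N \le \dim_\K \lin_\K\{\bvarphi_x \bvarphi_x^\ast : x \in \Omega\}$. Applying Lemma~\ref{lem:column_rank} to the $n^2$ functions $\{\varphi_k \overline{\varphi_l}\}_{k,l=1}^n$ and identifying each $n \times n$ matrix with its vectorization in $\K^{n^2}$ yields
\[
\dim_\K \lin_\K\{\varphi_k \overline{\varphi_l} : k,l=1,\ldots,n\} \;=\; \dim_\K \lin_\K\{\bvarphi_x \bvarphi_x^\ast : x \in \Omega\},
\]
so the two bounds coincide. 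There is no substantive obstacle here; the nontrivial content has already been absorbed into Proposition~\ref{prop:discr_orth} and Lemma~\ref{lem:ortho_equiv_Pars}, and the only bookkeeping step is this dimension reconciliation via Lemma~\ref{lem:column_rank}.
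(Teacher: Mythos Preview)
Your proposal is correct and follows essentially the same route as the paper: reduce to Proposition~\ref{prop:discr_orth} via the identification $\bvarphi_x\bvarphi_x^\ast=(\varphi_k(x)\overline{\varphi_l(x)})_{k,l}$, invoke Lemma~\ref{lem:ortho_equiv_Pars} to translate orthonormality into the Parseval frame property, and reconcile the dimension bounds using the equality~\eqref{eq:dim_equal} from Lemma~\ref{lem:column_rank}. The additional remarks you make about $L_2$-membership and discarding zero weights are minor bookkeeping points the paper leaves implicit.
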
 
\begin{proof}
Let $\varphi_1,\ldots,\varphi_n:\Omega\to\K$ be the component functions of $\bvarphi$
and observe $\bvarphi_x\bvarphi_x^\ast=(\varphi_k(x)\overline{\varphi_l(x)})_{k,l=1}^{n}$ for every $x\in\Omega$. Hence, \eqref{eq:cone-condition} is equivalent to condition (i) in Proposition~\ref{prop:discr_orth}. Further, due to~\eqref{eq:dim_equal}, 
\begin{align*}
\dim_{\K} \lin_{\K}\{\bvarphi_x\bvarphi_x^\ast :x\in\Omega \}=\dim_{\K}\lin_{\K}\{\varphi_k\overline{\varphi_l}:k,l=1,\ldots,n \}  \,.
\end{align*}
The rest follows from Lemma~\ref{lem:ortho_equiv_Pars}.
The orthonormality condition~\eqref{eq:ortho_cond} corresponds to (ii), and
(iii) here correlates as well to (iii) in Proposition~\ref{prop:discr_orth}.
\end{proof}

Theorem~\ref{thm:discr_orth}, together with Remark~\ref{rem:discr_orth_2}, should be compared with~\cite[Thm.~3.1]{BKPSU24}, noting that here we are in a more general setting. A further extension of Theorem~\ref{thm:discr_orth} is given in Theorem~\ref{thm:discr_orth_2} at the end of this section. 
First, we proceed with a number of useful corollaries for $\mu$-frames.
For their formulation, we need to assume that the index set $\Omega$ in Theorem~\ref{thm:discr_orth} is a positive measure space $(\Omega,\Sigma,\mu)$. 

The first corollary characterizes discrete scalability. It should be compared with results in~\cite{KUTYNIOK20132225} and~\cite{Kutyniok_Okoudjou_Philipp}. The proof is a straight-forward application of Theorem~\ref{thm:discr_orth}.

\begin{corollary}\label{cor:discr_scalability}
A $\mu$-frame $\bvarphi:\Omega\to\K^n$ for $\K^n$ is discretely scalable if and only if one of the conditions {\rm(i)-(iii)} of Theorem~\ref{thm:discr_orth} holds true for the family $\{\bvarphi_x\}_{x\in\Omega}$, where $\bvarphi_x:=\bvarphi(x)$.
\end{corollary}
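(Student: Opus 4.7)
The plan is to derive this characterization directly from Theorem~\ref{thm:discr_orth} by reformulating discrete scalability as the existence of a suitable discrete measure $\nu$ on $\Omega$ making $\bvarphi$ a Parseval $\nu$-frame. Since conditions~(i)--(iii) of Theorem~\ref{thm:discr_orth} are already equivalent, it suffices to match discrete scalability with any one of them; condition~(iii) turns out to be the most convenient for the ``if'' direction, and condition~(ii) for the ``only if'' direction.

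For the ``if'' direction I would start from condition~(iii), which provides $N$ points $x_1,\ldots,x_N\in\Omega$ and positive weights $\mu_1,\ldots,\mu_N$ such that $\bvarphi$ is a Parseval $\widehat\mu$-frame with $\widehat\mu=\sum_{j=1}^N\mu_j\delta_{x_j}$. Equivalently, the frame operator satisfies $\sum_{j=1}^N \mu_j\,\bvarphi_{x_j}\bvarphi_{x_j}^\ast=\bI_n$. Setting $s_j:=\sqrt{\mu_j}$ for $j=1,\ldots,N$ and $s_j:=0$ for $j>N$ (with arbitrary $x_j\in\Omega$ for $j>N$) produces a discrete family $\{s_j\bvarphi(x_j)\}_{j\in\N}$ whose frame operator is $\bI_n$; hence it is a discrete Parseval frame for $\K^n$, witnessing discrete scalability.

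For the ``only if'' direction, assume $\bvarphi$ is discretely scalable and choose nodes $\{x_j\}_{j\in\N}\subseteq\Omega$ together with non-negative scalars $\{s_j\}_{j\in\N}$ (admissible by the remark after Definition~\ref{def:discr_scalable}) so that $\{s_j\bvarphi(x_j)\}_{j\in\N}$ is a discrete Parseval frame. Define the positive discrete measure $\nu:=\sum_{j\in\N}s_j^2\delta_{x_j}$ on $\Omega$. Then for every $f\in\K^n$,
\[
\int_\Omega |\langle f,\bvarphi(x)\rangle|^2 \dint\nu(x) = \sum_{j\in\N} s_j^2 |\langle f,\bvarphi(x_j)\rangle|^2 = \sum_{j\in\N} |\langle f,s_j\bvarphi(x_j)\rangle|^2 = \|f\|^2,
\]
so $\bvarphi$ is a Parseval $\nu$-frame, which is precisely condition~(ii) of Theorem~\ref{thm:discr_orth}, and therefore (i) and (iii) also hold.

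No genuine obstacle stands in the way, as the equivalence is essentially a repackaging of Theorem~\ref{thm:discr_orth}. The only mild technicality worth flagging is to ensure that $\nu$ is a bona fide positive measure on a legitimate $\sigma$-algebra of $\Omega$ (which is handled by concentrating on the countable support $\{x_j:j\in\N\}$ and extending trivially) and that $\bvarphi\in L_2(\Omega,\nu;\K^n)$ holds, which is automatic from the Parseval identity together with the finite-dimensionality of $\K^n$.
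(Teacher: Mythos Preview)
Your proof is correct and follows essentially the same approach as the paper, which simply states that the corollary is a straight-forward application of Theorem~\ref{thm:discr_orth}. You have merely made explicit the easy translation between discrete scalability and conditions~(ii)/(iii), including the minor technicalities (possible repetition among the $x_j$, $L_2$-membership of $\bvarphi$ with respect to $\nu$), all of which are unproblematic.
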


Another result concerns the discretizability of the frame operator.

\begin{corollary}\label{cor:discr_orth}
Let $\bvarphi:\Omega\to\K^n$ be a
\emph{$\mu$-frame} for $\K^n$ and $S_{\bvarphi}:=S_{\bvarphi}(\mu)$ the associated frame operator.
Then there exist points $x_1,\ldots,x_N$, with $N$ as in Theorem~\ref{thm:discr_orth}, and weights $\mu_1,\ldots,\mu_N>0$ such that
\begin{align*}
S_{\bvarphi} = \sum_{j=1}^{N} \mu_j \bvarphi(x_j) \bvarphi(x_j)^\ast \,.  
\end{align*}
\end{corollary}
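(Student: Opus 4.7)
The plan is to reduce the assertion to Theorem~\ref{thm:discr_orth} by a similarity transformation that converts the given $\mu$-frame into a Parseval $\mu$-frame. Since $S_{\bvarphi}$ is positive definite Hermitian by the frame property, it admits a positive definite Hermitian square root $S_{\bvarphi}^{1/2}$ with inverse $S_{\bvarphi}^{-1/2}$. I would introduce the rescaled map
\[
\bpsi: \Omega\to\K^n \quad,\quad \bpsi(x) := S_{\bvarphi}^{-1/2}\bvarphi(x) \,,
\]
and check that $\bpsi$ is a Parseval $\mu$-frame for $\K^n$. Indeed, $\bpsi\in L_2(\Omega,\mu;\K^n)$ since $\bvarphi\in L_2(\Omega,\mu;\K^n)$ and $S_{\bvarphi}^{-1/2}$ is bounded, and
\[
S_{\bpsi}(\mu) = \int_\Omega S_{\bvarphi}^{-1/2}\bvarphi(x)\bvarphi(x)^\ast S_{\bvarphi}^{-1/2}\dint\mu(x) = S_{\bvarphi}^{-1/2}S_{\bvarphi}S_{\bvarphi}^{-1/2} = \bI_n \,.
\]

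Next, I would apply the implication (ii)$\Rightarrow$(iii) of Theorem~\ref{thm:discr_orth} to the family $\{\bpsi(x)\}_{x\in\Omega}$. This yields points $x_1,\ldots,x_N\in\Omega$ and weights $\mu_1,\ldots,\mu_N>0$ such that
\[
\sum_{j=1}^{N} \mu_j \bpsi(x_j)\bpsi(x_j)^\ast = \bI_n \,.
\]
Multiplying on both sides by $S_{\bvarphi}^{1/2}$ and using $\bpsi(x_j)\bpsi(x_j)^\ast = S_{\bvarphi}^{-1/2}\bvarphi(x_j)\bvarphi(x_j)^\ast S_{\bvarphi}^{-1/2}$ immediately gives the desired identity
\[
S_{\bvarphi} = \sum_{j=1}^{N} \mu_j \bvarphi(x_j)\bvarphi(x_j)^\ast \,.
\]

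Finally, to justify that $N$ matches the bound from Theorem~\ref{thm:discr_orth} as stated, I would observe that the map $A\mapsto S_{\bvarphi}^{-1/2}A S_{\bvarphi}^{-1/2}$ is a $\K$-linear bijection on $\K^{n\times n}$ and therefore preserves the dimension of linear hulls, giving
\[
\dim_{\K}\lin_{\K}\{\bpsi(x)\bpsi(x)^\ast:x\in\Omega\} = \dim_{\K}\lin_{\K}\{\bvarphi(x)\bvarphi(x)^\ast:x\in\Omega\} \,.
\]
I do not anticipate any substantive obstacle: once the right rescaling is identified, the proof is essentially a direct reduction to Theorem~\ref{thm:discr_orth}. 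The only point requiring a moment of care is confirming that the numerical parameter $N$ transfers correctly, which is handled by the invariance observation above.
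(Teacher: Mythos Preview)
Your proof is correct and follows essentially the same approach as the paper: rescale to the canonical Parseval $\mu$-frame $\bpsi=S_{\bvarphi}^{-1/2}\bvarphi$, apply (ii)$\Rightarrow$(iii) of Theorem~\ref{thm:discr_orth}, and conjugate back by $S_{\bvarphi}^{1/2}$. Your explicit remark that $A\mapsto S_{\bvarphi}^{-1/2}A S_{\bvarphi}^{-1/2}$ is a $\K$-linear bijection, and hence preserves the dimension of the relevant linear hull, is a nice touch that the paper leaves implicit.
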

\begin{proof}
Every $\mu$-frame $\bvarphi:\Omega\to\K^n$ possesses 
a canonical isomorphic Parseval $\mu$-frame $\bpsi:\Omega\to\K^n$, namely $\bpsi(x):=S_{\bvarphi}^{-1/2}\bvarphi(x)$.  
By the equivalence of the statements (ii) and (iii) in Theorem~\ref{thm:discr_orth}, we get
\begin{align*}
\bI_n =  S_{\bpsi}(\widehat{\mu}) = \sum_{j=1}^{N} \mu_j \bpsi(x_j) \bpsi(x_j)^\ast
\end{align*}
for certain points $x_1,\ldots,x_N$,, and certain weights $\mu_1,\ldots,\mu_N>0$,  where $N\le \dim_{\K}\lin_{\K}\{\bvarphi_x\bvarphi_x^\ast \colon\,x\in \Omega\}$ as in Theorem~\ref{thm:discr_orth}. This implies, due to $S^\ast_{\bvarphi}=S_{\bvarphi}$,
\begin{gather*}
S_{\bvarphi} = \sum_{j=1}^{N} \mu_j S^{1/2}_{\bvarphi}\bpsi(x_j) \bpsi(x_j)^\ast S^{1/2}_{\bvarphi}   
=  \sum_{j=1}^{N} \mu_j S^{1/2}_{\bvarphi}\bpsi(x_j) \big(S^{1/2}_{\bvarphi}\bpsi(x_j)\big)^\ast = \sum_{j=1}^{N} \mu_j \bvarphi(x_j) \bvarphi(x_j)^\ast \,. \qedhere
\end{gather*}
\end{proof}

As the sum in Corollary~\ref{cor:discr_orth} reproduces the frame operator $S_{\bvarphi}$ exactly, we have in particular, due to~\eqref{eq:spectral_frameop}, 
\begin{align*}
A \|f\|^2 \le \sum_{j=1}^{N} \mu_j |\langle f,\bvarphi(x_j) \rangle|^2  \le B \|f\|^2 \quad\text{ for all }f\in\cH \,,
\end{align*}
if $A$ and $B$ are the frame bounds of $\bvarphi$. 
We hence also obtain the following result.

\begin{corollary}\label{cor:discr_subframe}
Let $\bvarphi:\Omega\to\K^n$ be a
\emph{$\mu$-frame} for $\K^n$ with frame bounds $A$,$B$.
Then there exist points $x_1,\ldots,x_N\in\Omega$, with $N$ as in Theorem~\ref{thm:discr_orth}, and weights $s_1,\ldots,s_N>0$ such that
\begin{align*}
   \bvarphi_{\rm discr}:= \big\{s_j\bvarphi(x_j)\big\}_{j=1}^{N}
\end{align*} 
is a discrete frame for $\K^n$ with the same frame bounds.
\end{corollary}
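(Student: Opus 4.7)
The plan is to reduce directly to Corollary~\ref{cor:discr_orth}, which already gives an exact discrete reproduction of the frame operator $S_{\bvarphi}$, and then to observe that the frame bounds are fully determined by this operator.

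First I would apply Corollary~\ref{cor:discr_orth} to the $\mu$-frame $\bvarphi$, which yields points $x_1,\ldots,x_N\in\Omega$ and positive weights $\mu_1,\ldots,\mu_N>0$ with
\begin{align*}
S_{\bvarphi} \;=\; \sum_{j=1}^{N} \mu_j\, \bvarphi(x_j)\bvarphi(x_j)^{\ast}\,.
\end{align*}
Defining $s_j:=\sqrt{\mu_j}>0$, the discrete family $\bvarphi_{\rm discr}=\{s_j\bvarphi(x_j)\}_{j=1}^{N}$ then has frame operator
\begin{align*}
S_{\bvarphi_{\rm discr}} \;=\; \sum_{j=1}^{N} \big(s_j\bvarphi(x_j)\big)\big(s_j\bvarphi(x_j)\big)^{\ast}
\;=\; \sum_{j=1}^{N} \mu_j\, \bvarphi(x_j)\bvarphi(x_j)^{\ast} \;=\; S_{\bvarphi}\,.
\end{align*}

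Since $\bvarphi_{\rm discr}$ is a finite family in $\K^n$, it is trivially a discrete frame as soon as its frame operator is positive definite, which is inherited from $S_{\bvarphi}$. The frame bounds are given by the extremal eigenvalues of the frame operator via~\eqref{eq:spectral_frameop}, so the lower and upper bounds of $\bvarphi_{\rm discr}$ coincide with those of $\bvarphi$, i.e., with $A$ and $B$. This finishes the proof; I do not expect any real obstacle, as the corollary is essentially a repackaging of Corollary~\ref{cor:discr_orth} after the cosmetic substitution $s_j=\sqrt{\mu_j}$.
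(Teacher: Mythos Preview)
Your proof is correct and follows exactly the paper's approach: the paper deduces the corollary from Corollary~\ref{cor:discr_orth} by noting that the discrete sum reproduces $S_{\bvarphi}$ and then invoking~\eqref{eq:spectral_frameop} to transfer the frame bounds. You have merely made the implicit substitution $s_j=\sqrt{\mu_j}$ explicit.
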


This last statement is a positive answer to the problem of exact weighted frame subsampling in finite-dimensional Hilbert spaces. In contrast, if no reweighting of the subsampled frame elements is allowed, it is not always possible to preserve the frame bounds. A simple example in $\K^2$ is given below.

\begin{example}\label{ex:unweight_sub}
Let $e_1=(1,0)^\top$, $e_2=(0,1)^\top$ denote the canonical unit vectors in $\K^2$.
For a sequence $\{\lambda_j\}_{j\in\N}\subset\R_{>0}$ of strictly positive numbers with $\sum_{j\in\N} \lambda_j^2 = 1$ the collection $\{f_j\}_{j\in\N_0}$ of vectors $f_0:=e_1$ and $f_j:=\lambda_j e_2$, $j\in\N$,
is then a discrete Parseval frame in $\K^2$. But, for any subsampled frame $\{f_j\}_{j\in J_0}$ with $J_0:=\{0\}\cup J$, where $J\subsetneq\N$ is a non-empty proper subset of $\N$, we have the frame bounds $\sum_{j\in J} \lambda_j^2 = A<B=1$.
\end{example}

Unweighted frame subsampling, i.e., subsampling without rescaling, is a much more difficult problem. As demonstrated by Example~\ref{ex:unweight_sub}, exactness can usually not be achieved. But, for some parameter $\varepsilon>0$, one can aim for an approximate discretization, i.e., for a subsampled frame with comparable frame bounds, usually of the form $(1-\varepsilon)A$ and $(1+\varepsilon)B$. Such results have been obtained on the basis of Weaver's $\textup{KS}_2$-theorem, whose statement is equivalent to the Kadison-Singer theorem, see e.g.~\cite{CasazzaEdidin2007}, and was famously proved by Marcus, Spielman, and Srivastava~\cite{MaSpSr15} in 2015, and ideas from~\cite{NiOlUl16}, see e.g.~\cite[Lem.~2.2]{LimTem22},~\cite[Thm.~2.3]{NaSchUl22}, or \cite[Prop.~17]{DoKrUl23}. These latter results all apply to discrete frames.
Concerning continuous frames, let us mention
the seminal work~\cite{FS19} by Freeman and Speegle. It shows 
that every bounded continuous frame $\bvarphi:\Omega\to\cH$, i.e., one for which $\sup_{x\in\Omega} \|\bvarphi(x)\|_{\cH}<\infty$, 
contains a countable subframe with comparable bounds for separable $\cH$. Other articles related to this subject are e.g.~\cite{BodmannCasazzaPaulsenSpeegle2010,Bownik2017_Lyapunov}.

We end this section with Theorem~\ref{thm:discr_orth_2} below, which summarizes most of our investigations in this section since it contains the statements of Theorem~\ref{thm:discr_orth} and Corollary~\ref{cor:discr_orth} as special cases.

\begin{theorem}\label{thm:discr_orth_2} 
    Let $\{\bvarphi_x\}_{x\in\Omega}$ be a family of vectors in $\K^n$, with arbitrary index set $\Omega$, and let the function $\bvarphi:\Omega\to\K^n$ be defined by assigning $x\mapsto\bvarphi_x$. 
    For a positive definite Hermitian matrix $\bF\in\K^{n\times n}$,
    the following assertions are equivalent.
    \begin{enumerate}[label=\normalfont(\roman*)]
    \item
        We have
        \begin{equation}\label{eq:cone-condition_2}
            \bF \in \cone \Big\{
            \bvarphi_x\bvarphi_x^\ast
            \colon\,x\in \Omega \Big\}\,.
        \end{equation}
     \item
        There exists a positive measure $\mu$ on $\Omega$ such that $\bvarphi$ is a $\mu$-frame for $\K^n$ with frame operator $S_{\bvarphi}(\mu)=\bF$.
    \item
        There exists $N \le \dim_{\K}\lin_{\K}\{\bvarphi_x\bvarphi_x^\ast \colon\,x\in \Omega\}$ and points $x_1,\ldots,x_{N}\in \Omega$ and weights $\mu_1,\ldots,\mu_{N}>0$ such that
            $\bvarphi$ is a $\widehat{\mu}$-frame with respect to the discrete measure $\widehat{\mu}=\sum_{j=1}^{N} \mu_j \delta_{x_j}$ and $S_{\bvarphi}(\widehat{\mu})=\bF$.
    \end{enumerate}
\end{theorem}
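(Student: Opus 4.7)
The plan is to reduce Theorem~\ref{thm:discr_orth_2} to the already established Theorem~\ref{thm:discr_orth} by a whitening transformation. Since $\bF$ is positive definite Hermitian, the square roots $\bF^{1/2}$ and $\bF^{-1/2}$ exist, are Hermitian, and are invertible. I would introduce the rescaled family $\bpsi_x := \bF^{-1/2}\bvarphi_x$, so that the associated function $\bpsi:\Omega\to\K^n$ is obtained from $\bvarphi$ by postcomposition with an invertible linear map. In particular, $\bpsi$ is measurable, and $\bpsi\in L_2(\Omega,\mu;\K^n)$ iff $\bvarphi\in L_2(\Omega,\mu;\K^n)$.

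Next, I would verify the three correspondences. For the rank-one tensors, the identity
\[
\bpsi_x\bpsi_x^\ast = \bF^{-1/2}\bvarphi_x\bvarphi_x^\ast\bF^{-1/2}
\]
combined with the fact that conjugation by the invertible Hermitian matrix $\bF^{-1/2}$ is a linear bijection on $\K^{n\times n}$, shows that $\bF \in \cone\{\bvarphi_x\bvarphi_x^\ast : x\in\Omega\}$ if and only if $\bI_n = \bF^{-1/2}\bF\bF^{-1/2} \in \cone\{\bpsi_x\bpsi_x^\ast : x\in\Omega\}$, and likewise that these two sets span $\K$-subspaces of the same dimension. This matches condition (i) with the corresponding condition of Theorem~\ref{thm:discr_orth} for $\bpsi$ and equates the dimension bound in (iii) with that of Theorem~\ref{thm:discr_orth}.

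For (ii) and (iii), the analogous integral identity
\[
S_{\bpsi}(\mu) = \int_\Omega \bpsi_x\bpsi_x^\ast\dint\mu(x) = \bF^{-1/2}\,S_{\bvarphi}(\mu)\,\bF^{-1/2}
\]
shows that $S_{\bvarphi}(\mu)=\bF$ if and only if $S_{\bpsi}(\mu)=\bI_n$, i.e., $\bpsi$ is a Parseval $\mu$-frame. Moreover, $\bvarphi$ is a $\mu$-frame iff $\bpsi$ is, since multiplication by the invertible operator $\bF^{-1/2}$ preserves the two-sided frame inequality (only the constants change). The analogous equivalence for discrete measures $\widehat\mu=\sum_{j=1}^N \mu_j\delta_{x_j}$ follows by the same computation. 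Applying Theorem~\ref{thm:discr_orth} to $\bpsi$ then gives all three equivalences (i)$\Leftrightarrow$(ii)$\Leftrightarrow$(iii) of Theorem~\ref{thm:discr_orth_2} together with the stated bound $N \le \dim_{\K}\lin_{\K}\{\bvarphi_x\bvarphi_x^\ast : x\in\Omega\}$.

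There is no substantive obstacle beyond checking that all relevant quantities transform correctly under the conjugation by $\bF^{-1/2}$; the positive definiteness of $\bF$, which is indispensable for defining $\bF^{-1/2}$ and which makes the conjugation a bijection on Hermitian positive semi-definite cones, is what makes the reduction clean.
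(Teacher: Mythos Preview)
Your proposal is correct and follows essentially the same approach as the paper: define $\bpsi_x:=\bF^{-1/2}\bvarphi_x$ and reduce to Theorem~\ref{thm:discr_orth}. The paper's proof is a terse two-line version of exactly this argument; you have simply spelled out the conjugation computations and the preservation of the dimension bound in more detail.
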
 
\begin{proof}
Define $\bm\tilde{\bvarphi}_x:=\bF^{-1/2}\bvarphi_x$. Condition (i) is then equivalent to $\bI_n \in \cone \big\{
            \bm\tilde{\bvarphi}_x\bm\tilde{\bvarphi}_x^\ast
            \colon\,x\in \Omega \big\}$. The statement now follows from applying Theorem~\ref{thm:discr_orth} to the family $\{\bm\tilde{\bvarphi}_x\}_{x\in\Omega}$.
\end{proof}

Theorem~\ref{thm:discr_orth_2} draws the boundaries for the possibility of `tuning a $\mu$-frame' $\bvarphi$, in the sense of improving its frame bounds, via a change of measure. According to the result, this is possible if and only if
there is a positive definite matrix $\bF$ better conditioned than $S_{\bvarphi}(\mu)$ and satisfying the cone condition~\eqref{eq:cone-condition_2}.
Theorem~\ref{thm:discr_orth_2} then asserts 
the existence of a discrete measure $\widehat{\mu}=\sum_{j=1}^{N} \mu_j \delta_{x_j}$ with
$N\le \dim_{\K}\lin_{\K}\{\bvarphi_x\bvarphi_x^\ast \colon\,x\in \Omega\}$ such that $\bvarphi$ is a $\widehat{\mu}$-frame with the better conditioned frame operator $S_{\bvarphi}(\widehat{\mu})=\bF$. 
If $\bvarphi$ is discretely scalable, one can thus aim for $S_{\bvarphi}(\widehat{\mu})=\bI_n$. If this is not the case, one can still try to find reweighted subframes with better condition. Not in all cases, however, an optimal subframe exists. 
Investigations on `frame tuning' is an ongoing effort in frame theory, see  e.g.~\cite{CasaCarliTran23}.

\begin{remark}\label{rem:discr_orth_2}
As for Proposition~\ref{prop:discr_orth}, see~Remark~\ref{rem:discr_orth}, convex versions of Theorems~\ref{thm:discr_orth} and~\ref{thm:discr_orth_2} can be stated. 
If in item (i) the convex hull 
\[
\conv \Big\{
            \bvarphi_x\bvarphi_x^\ast
            \colon\,x\in \Omega \Big\}
\]
is used instead of the conic hull
the measure $\mu$ in (ii) can be realized as a probability measure and in (iii), allowing
$N+1$ points and weights, the additional condition $\mu_1 + \ldots + \mu_{N+1}=1$ can be fulfilled.
\end{remark}



\section{\texorpdfstring{$D$-optimal}{D-optimal} design}
\label{sec:$D$-opt}

For practical applications it is desirable that the point-weight pairs $\{(x_j,\mu_j)\}_{j=1}^{N}\subset\Omega\times\R_{>0}$ in item (iii) of Proposition~\ref{prop:discr_orth}, Theorem~\ref{thm:discr_orth}, or Theorem~\ref{thm:discr_orth_2}  
can be determined algorithmically.
Therefore we present in this last section a more constructive approach for obtaining those.
The basic idea is to formulate the task as an extremal problem with an objective function that has to be maximized. It is a strategy reminiscent of the utilization of frame potentials for the construction of well-conditioned frames.

\begin{remark}
Constructivity in our context here just means that the solution procedure is guided by an underlying extremal principle. In this sense the proposed $D$-design method is constructive. However, it usually involves the solution of a difficult non-convex optimization problem.
Example~\ref{ex:Ddesign}, at the end of this section, is an illustration, where the $D$-design approach leads to the problem of finding the single solution $x_0\in\N$ of $\varphi(x)=1$ for a function $\varphi:\N\to\{0,1\}$ with $|\{x\in\N : \varphi(x)=1\}|=1$.
Efficient computational solutions can hence only exist for suitably restricted subclasses of the problem. 
\end{remark}

In our analysis it suffices to consider the problem of determining the point-weight pairs in Theorem~\ref{thm:discr_orth} (iii), since Proposition~\ref{prop:discr_orth} is just a reformulation of Theorem~\ref{thm:discr_orth} and the problem of finding the point-weight pairs in Theorem~\ref{thm:discr_orth_2} (iii)
from data $\bF\in\K^{n\times n}$ and $\bm\tilde{\bvarphi}:\Omega\to\K^n$ can be reduced to
finding a point-weight set $\{(x_j,\mu_j)\}_{j=1}^{N}\subset\Omega\times\R_{>0}$ satisfying
\begin{align}\label{eq:aim_Ddesign_2}
\bI_n = \sum_{j=1}^{N} \mu_j \bvarphi(x_j) \bvarphi(x_j)^\ast   
\end{align}
for the transformed function
$\bvarphi(x):=\bF^{-1/2}\bm\tilde{\bvarphi}(x)$.
Recall that the input 
$\bF\in\K^{n\times n}$ is required to be a positive definite Hermitian matrix that satisfies the cone condition~\eqref{eq:cone-condition_2}.
The new function $\bvarphi:\Omega\to\K^n$ then clearly fulfills
the cone condition~\eqref{eq:cone-condition} in Theorem~\ref{thm:discr_orth}, namely
\begin{align}\label{eq:cone-condition-Dopt}
\bI_n \in 
\cone \Big\{
            \bvarphi(x)\bvarphi(x)^\ast
            \colon\,x\in \Omega \Big\} =: \cC_{\bvarphi}
            \,.
\end{align}
The essential task is hence to find points and weights for the representation~\eqref{eq:aim_Ddesign_2} under the assumption~\eqref{eq:cone-condition-Dopt}.
To solve this problem, we subsequently 
present an idea firstly elaborated in~\cite[Sec.~3]{BKPSU24}.

A central element is the \emph{Christoffel function} \begin{align*}
\gamma_{n}(x) 
:= \frac{1}{n} \|\bvarphi(x)\|_2^2  
\end{align*}
associated to $\bvarphi$. With this function, we can
make the transformation 
\begin{align}\label{rescaling_of_varphi}
    \bpsi(x) :=  \bvarphi(x)\sqrt{\omega_{n}(x)} \quad\text{ with}\quad \omega_{n}(x) :=  \begin{cases} 1/\gamma_{n}(x)  &,\, \gamma_n(x)\neq 0 \,, \\
    0 &,\, \gamma_n(x) = 0 \,.
    \end{cases}
\end{align}

For each fixed $x\in\Omega$, we then obviously have 
\begin{align}\label{sum_transformed}
\|\bpsi(x)\|_2^2 
= \|\bvarphi(x)\|_2^2 \cdot\omega_{n}(x) = \begin{cases} n  &,\, \omega_n(x)\neq 0 \,, \\
    0 &,\, \omega_n(x) = 0 \,.
    \end{cases}
\end{align}
Also, the following equality holds true,
\begin{align*}
\cC_{\bvarphi} 
            = \cone \Big\{
            \bpsi(x)\bpsi(x)^\ast
            \colon\,x\in \Omega \Big\} =: \cC_{\bpsi} \,.
\end{align*}
Let us further define the convex hull $\Delta_{\bpsi}$ generated by $\bpsi(x)\bpsi(x)^\ast$, i.e.,
\begin{align*}
\Delta_{\bpsi} := \conv \Big\{
            \bpsi(x)\bpsi(x)^\ast
            \colon\,x\in \Omega \Big\}\,.
\end{align*}

\begin{lemma}\label{lem:D-optimal_1}
It holds $\bI_n \in \cC_{\bvarphi} \Leftrightarrow
            \bI_n \in \Delta_{\bpsi}$.
Further, every matrix $\bA\in\Delta_{\bpsi}$ satisfies $\Tr \big(\bA\big)
        \leq n$.
\end{lemma}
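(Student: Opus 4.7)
The plan is to exploit the rescaling in~\eqref{rescaling_of_varphi}, which produces vectors $\bpsi(x)$ whose squared norms take only the two values $0$ and $n$, see~\eqref{sum_transformed}. This dichotomy is precisely what lets us convert between conic and convex combinations by a normalization of weights.

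For the trace bound, I would take an arbitrary $\bA\in\Delta_{\bpsi}$, write it as $\bA=\sum_{j=1}^{M}\lambda_j\bpsi(x_j)\bpsi(x_j)^\ast$ with $\lambda_j\ge 0$ and $\sum_j\lambda_j=1$, and then simply compute
\[
\Tr(\bA)=\sum_{j=1}^{M}\lambda_j\Tr\!\big(\bpsi(x_j)\bpsi(x_j)^\ast\big)=\sum_{j=1}^{M}\lambda_j\|\bpsi(x_j)\|_2^2\le n\sum_{j=1}^{M}\lambda_j=n,
\]
where the inequality uses $\|\bpsi(x_j)\|_2^2\in\{0,n\}$ from~\eqref{sum_transformed}.

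The equivalence $\bI_n\in\cC_{\bvarphi}\Leftrightarrow\bI_n\in\Delta_{\bpsi}$ will be handled as follows. The direction ``$\Leftarrow$'' is immediate, since $\Delta_{\bpsi}\subseteq\cC_{\bpsi}=\cC_{\bvarphi}$. For ``$\Rightarrow$'', I would start from $\bI_n\in\cC_{\bvarphi}=\cC_{\bpsi}$ and write $\bI_n=\sum_{j=1}^{M} c_j\bpsi(x_j)\bpsi(x_j)^\ast$ with $c_j\ge 0$. The key step is then to discard all indices $j$ for which $\bpsi(x_j)=0$ (these contribute nothing to the sum), and to observe that for the surviving indices $\|\bpsi(x_j)\|_2^2=n$ by~\eqref{sum_transformed}. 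Taking the trace yields
\[
n=\Tr(\bI_n)=\sum_{j}c_j\|\bpsi(x_j)\|_2^2=n\sum_{j}c_j,
\]
so $\sum_j c_j=1$, and hence the representation is actually a convex combination, proving $\bI_n\in\Delta_{\bpsi}$.

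I do not expect any serious obstacle here; the only point requiring a bit of care is the bookkeeping around the possibly empty set where $\omega_n(x)=0$, i.e.\ where the rescaled vectors vanish. Discarding these terms is harmless because the corresponding rank-one matrices are zero, and the remaining indices all give $\|\bpsi(x_j)\|_2^2=n$ uniformly, which is exactly what makes the weight-normalization step~$\sum_j c_j=1$ automatic.
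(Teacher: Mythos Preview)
Your proof is correct and follows essentially the same approach as the paper: both arguments hinge on taking the trace of the rank-one decomposition and using that $\|\bpsi(x)\|_2^2\in\{0,n\}$ forces the conic weights to sum to~$1$. The only cosmetic difference is that you invoke the already-established identity $\cC_{\bvarphi}=\cC_{\bpsi}$ to start directly with a $\bpsi$-representation, whereas the paper converts explicitly via $\alpha_j=\beta_j\gamma_n(x_j)$ and $\beta_j=\alpha_j\omega_n(x_j)$; the trace computation and the handling of zero terms are the same in both.
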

\begin{proof}
$\bI_n \in \cC_{\bvarphi}$ means that there exist $M\in\N$, points $x_1,\ldots,x_M\in\Omega$, and non-negative weights $\beta_1,\ldots,\beta_M$ such that
\begin{align}\label{abcxyz}
\bI_n =  \sum_{j=1}^{M}\beta_j  \bvarphi(x_j)\bvarphi(x_j)^\ast \,,
\end{align}
which can be rewritten as 
\begin{align}\label{xyzabc}
\bI_n =  \sum_{j=1}^{M}\alpha_j \bpsi(x_j)\bpsi(x_j)^\ast \quad\text{ with weights $\alpha_j:=\beta_j\cdot\gamma_{n}(x_j)$}\,.
\end{align}
Further, due to~\eqref{abcxyz}, it holds
\begin{align*}
\sum_{j=1}^{M} \alpha_j =
\sum_{j=1}^{M} \beta_j\gamma_{n}(x_j) = \frac{1}{n} \sum_{j=1}^{M} \beta_j \|\bvarphi(x_j)\|_2^2 = \frac{1}{n} \sum_{j=1}^{M} \beta_j \Tr(\bvarphi(x_j)\bvarphi(x_j)^\ast) = 1 \,,
\end{align*}
proving $\bI_n \in \Delta_{\bpsi}$. For the opposite direction, we start with~\eqref{xyzabc} and non-negative weights $\alpha_j$ summing up to $1$. Then clearly~\eqref{abcxyz} holds with $\beta_j:=\alpha_j\cdot\omega_{n}(x_j)$ and thus $\bI_n \in \cC_{\bvarphi}$.
Next, let $\bA\in\Delta_{\bpsi}$ with representation $\sum_{j=1}^{M}\alpha_j  \bpsi(x_j)\bpsi(x_j)^\ast$. Then, in view of \eqref{sum_transformed},
    \begin{gather*}
        \Tr \Big(\sum_{j=1}^{M}\alpha_j  \bpsi(x_j)\bpsi(x_j)^\ast\Big)
        =  \sum_{j=1}^{M}\alpha_j  \Tr(\bpsi(x_j)\bpsi(x_j)^\ast)
        = \sum_{j=1}^{M}\alpha_j
         \|\bpsi(x_j)\|_2^2 
        \le \sum_{j=1}^{M}\alpha_j n = n \,. \qedhere
    \end{gather*}
\end{proof}

Lemma~\ref{lem:D-optimal_1} provides an estimate for the trace $\Tr(\bA)$ of matrices $\bA\in\Delta_{\bpsi}$. The subsequent lemma, similarly, provides an estimate for the determinant $\det(\bA)$.

\begin{lemma}\label{lem:D-optimal_2}
Let $\bA\in\Delta_{\bpsi}$. Then $\det(\bA)\le 1$ and
$\det\big(\bA\big) = 1 \Leftrightarrow
            \bI_n = \bA$.
\end{lemma}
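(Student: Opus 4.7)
The plan is to exploit the fact that every $\bA \in \Delta_{\bpsi}$ is a convex combination of rank-one positive semi-definite Hermitian matrices $\bpsi(x)\bpsi(x)^\ast$, hence itself positive semi-definite Hermitian. This means $\bA$ has a full set of non-negative real eigenvalues $\lambda_1,\ldots,\lambda_n \ge 0$, which is the key structural input for the bound.

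First I would record that $\sum_{i=1}^n \lambda_i = \Tr(\bA) \le n$, invoking Lemma~\ref{lem:D-optimal_1}. Then I would apply the AM--GM inequality to the eigenvalues to get
\begin{equation*}
\det(\bA) = \prod_{i=1}^n \lambda_i \le \Big(\frac{1}{n}\sum_{i=1}^n \lambda_i\Big)^n = \Big(\frac{\Tr(\bA)}{n}\Big)^n \le 1\,,
\end{equation*}
which settles the inequality $\det(\bA) \le 1$.

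For the equivalence $\det(\bA)=1 \Leftrightarrow \bA=\bI_n$, the direction ``$\Leftarrow$'' is immediate. For ``$\Rightarrow$'', I would analyze the equality cases in the chain above. The AM--GM step forces $\lambda_1 = \cdots = \lambda_n =: \lambda$, while the trace bound forces $n\lambda = \Tr(\bA) = n$, so $\lambda = 1$. Since $\bA$ is Hermitian with all eigenvalues equal to $1$, it is unitarily similar to $\bI_n$, hence $\bA = \bI_n$.

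The argument is essentially routine; the main point to verify is that the matrices in $\Delta_{\bpsi}$ really are Hermitian and positive semi-definite (so that the eigenvalue/AM--GM machinery is available), together with the correct use of the trace bound from Lemma~\ref{lem:D-optimal_1}. No serious obstacle is anticipated.
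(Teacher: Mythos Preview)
Your proposal is correct and follows essentially the same route as the paper: use the trace bound $\Tr(\bA)\le n$ from Lemma~\ref{lem:D-optimal_1}, apply AM--GM to the (nonnegative, real) eigenvalues of the Hermitian matrix $\bA$ to obtain $\det(\bA)\le 1$, and then analyze the equality cases to conclude that all eigenvalues equal $1$ and hence $\bA=\bI_n$. The only difference is cosmetic---you are slightly more explicit about why $\bA$ is positive semi-definite Hermitian, which the paper leaves implicit.
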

\begin{proof}
Due to Lemma~\ref{lem:D-optimal_1} and the inequality between arithmetic and geometric mean, it holds 
    \begin{equation}\label{aux:equation}
        \sqrt[n]{\det\big(\bA
        \big)}
        \le \frac{1}{n}
        \Tr \big(\bA\big)
        \leq 1\,.
    \end{equation}
This proves the first assertion. Since $\det(\bI_n)=1$, it only remains to prove $\bA=\bI_n$ for $\bA\in\Delta_{\bpsi}$ with $\det(\bA)=1$. But due to~\eqref{aux:equation}, the assumption $\det(\bA)=1$ necessarily implies $\Tr(\bA)=n$.
Therefore both, the geometric and the arithmetic mean of the eigenvalues of $\bA$, are $1$, which is only possible if all eigenvalues are $1$, whence $\bA = \bI_{n}$ since $\bI_{n}$ is the only Hermitian matrix with that property.
\end{proof}

Based on Lemma~\ref{lem:D-optimal_2}, we next transfer the maximization   
procedure from~\cite[Sec.~3]{BKPSU24} to our generalized setting.
For this we introduce, for each $N\in\N$, the class 
\begin{align*}
\cK_N := \bigg\{ \kappa= \big\{\big(x^{(\kappa)}_j,\alpha^{(\kappa)}_j\big)\big\}_{j=1}^{N} \colon\, x^{(\kappa)}_1,\ldots,x^{(\kappa)}_{N}\in\Omega\,;\: \alpha^{(\kappa)}_1,\ldots,\alpha^{(\kappa)}_{N}\in\R_{>0}\,;\: \sum_{j=1}^{N} \alpha^{(\kappa)}_j=1  \bigg\} 
\end{align*}
of weighted knots
and, for each $\kappa= \{(x^{(\kappa)}_j,\alpha^{(\kappa)}_j)\}_{j=1}^{N}\in\cK_N$,
we let $\bA_{\kappa}$ denote the matrix 
\[
\bA_{\kappa} := \sum_{j=1}^{N} \alpha^{(\kappa)}_j \bpsi(x^{(\kappa)}_j) \bpsi(x^{(\kappa)}_j)^\ast \,.
\]
Due to Lemma~\ref{CarSub}~(ii), we get for 
$N\ge \dim_{\R}\lin_{\R}\{\bpsi(x)\bpsi(x)^\ast \colon\,x\in \Omega\}+1$
\begin{align}\label{eq:knots_restriction}
  \big\{ \bA_{\kappa} : \kappa\in\cK_{N} \big\} = \conv \big\{ \bpsi(x)\bpsi(x)^\ast
            \colon\,x\in \Omega \big\} = \Delta_{\bpsi} \,. 
\end{align}
Hence, if $N$ is sufficiently large and $\bI_n\in \Delta_{\bpsi}$, there is a node and weight set $\kappa^\ast\in\cK_{N}$ that fulfills
\begin{align*}
\bI_n = \bA_{\kappa^\ast} \quad\text{and}\quad \det(\bA_{\kappa^\ast}) = 1 = \max\limits_{\bA \in \Delta_{\bpsi}} \det(\bA) \,.
\end{align*}
This motivates the following maximization procedure, which follows the principle of $D$-optimal design,
\begin{equation}\label{det_pursuit}
  \kappa^\ast = \argmax\limits_{\kappa\in\cK_N} \det(\bA_{\kappa})  \,,
\end{equation}
whereby $N$ needs to be chosen large enough.
This method has already been presented in~\cite[Thm.~3.1]{BKPSU24} for compact domains $\Omega$ and continuous $\bvarphi:\Omega\to\K^n$.
According to the next theorem, it
is indeed well-defined
whenever the necessary condition $\bI_n \in \Delta_{\bpsi}$ is fulfilled and $N> \dim_{\R}\lin_{\R}\{\bpsi(x)\bpsi(x)^\ast \colon\,x\in \Omega\}$. 
No compactness assumption as in~\cite[Thm.~3.1]{BKPSU24} is needed.
Further, by Lemma~\ref{lem:D-optimal_1},
the natural assumption $\bI_n \in \Delta_{\bpsi}$
is equivalent to $\bI_n \in \cC_{\bvarphi}$, the 
criterion for discrete scalability of $\bvarphi$.

\begin{theorem}
\label{$D$-opt} 
    Let $\Omega$ be a set and $\bvarphi:\Omega\to\K^n$
    a function. Let further 
    $\bI_n\in\K^{n\times n}$ be
    the identity matrix.
    The following assertions are equivalent.
    \begin{enumerate}[label=\normalfont(\roman*)]
    \item
        We have $\bI_n\in\cC_{\bvarphi}$.
    \item
        We have $\bI_n\in\Delta_{\bpsi}$ with $\bpsi$ as in~\eqref{rescaling_of_varphi}.
    \item 
    The matrix $\bI_n$ is the unique maximizer $\bA_{\kappa^\ast}$ of the determinant maximization problem~\eqref{det_pursuit} 
    if $N\ge \dim_{\K}\lin_{\K}\{\bvarphi(x)\bvarphi(x)^\ast \colon\,x\in \Omega\} +1$. 
    \end{enumerate}
\end{theorem}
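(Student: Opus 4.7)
The plan is to establish the chain $\text{(i)} \Leftrightarrow \text{(ii)} \Leftrightarrow \text{(iii)}$ by directly invoking the two preparatory lemmas and the Carathéodory-based identity~\eqref{eq:knots_restriction}. The equivalence $\text{(i)} \Leftrightarrow \text{(ii)}$ is already recorded in the first claim of Lemma~\ref{lem:D-optimal_1}, so nothing more is required there, and the work reduces to the equivalence $\text{(ii)} \Leftrightarrow \text{(iii)}$.

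For $\text{(ii)} \Rightarrow \text{(iii)}$, I first verify that the $N$-threshold stated in (iii) is exactly the one needed in~\eqref{eq:knots_restriction}. Since $\bpsi(x)\bpsi(x)^\ast = \omega_n(x)\,\bvarphi(x)\bvarphi(x)^\ast$ with $\omega_n(x) \ge 0$, the real-linear hulls of $\{\bpsi(x)\bpsi(x)^\ast : x\in\Omega\}$ and $\{\bvarphi(x)\bvarphi(x)^\ast : x\in\Omega\}$ coincide. Moreover, for a family of Hermitian matrices the real-linear hull intersects its multiplication by $\ii$ only in the zero matrix (since $\ii H$ is anti-Hermitian whenever $H$ is Hermitian), so
\[
\dim_{\K}\lin_{\K}\{\bvarphi(x)\bvarphi(x)^\ast : x\in\Omega\} = \dim_{\R}\lin_{\R}\{\bpsi(x)\bpsi(x)^\ast : x\in\Omega\}
\]
in both cases $\K=\R$ and $\K=\C$. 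Hence~\eqref{eq:knots_restriction} applies, giving $\{\bA_\kappa : \kappa\in\cK_N\} = \Delta_{\bpsi}$, so the hypothesis $\bI_n \in \Delta_{\bpsi}$ produces some $\kappa^\ast\in\cK_N$ with $\bA_{\kappa^\ast}=\bI_n$. Lemma~\ref{lem:D-optimal_2} then yields $\det(\bA_\kappa) \le 1$ across the whole feasible set with equality only at $\bI_n$, confirming the unique-maximizer claim.

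The reverse implication $\text{(iii)} \Rightarrow \text{(ii)}$ is immediate: if (iii) holds, then in particular there is some $\kappa^\ast \in \cK_N$ with $\bA_{\kappa^\ast}=\bI_n$, and such an $\bA_{\kappa^\ast}$ is by construction a convex combination of matrices $\bpsi(x)\bpsi(x)^\ast$, so $\bI_n \in \Delta_{\bpsi}$. The only mildly technical point I anticipate is the bookkeeping needed to identify the two dimension formulas, together with a short observation that the strict positivity of the weights in $\cK_N$ does not shrink the feasible set below $\Delta_{\bpsi}$: any convex combination with $M \le N$ summands can be rewritten with $N$ strictly positive weights by repeating points and splitting the corresponding weight evenly, e.g.\ by replacing a single $(y,\beta)$ with $k{+}1$ copies of $(y,\beta/(k{+}1))$. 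Once this bookkeeping is in place, the theorem follows from the two preceding lemmas by direct citation.
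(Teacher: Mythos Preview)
Your proof is correct and follows essentially the same route as the paper: the equivalence $\text{(i)}\Leftrightarrow\text{(ii)}$ is Lemma~\ref{lem:D-optimal_1}, and $\text{(ii)}\Rightarrow\text{(iii)}$ combines~\eqref{eq:knots_restriction} with Lemma~\ref{lem:D-optimal_2}, after matching the dimension thresholds. Where the paper simply cites \cite[Lem.~2.6]{BKPSU24} for $\dim_{\K}\lin_{\K}\{\bvarphi(x)\bvarphi(x)^\ast\}=\dim_{\R}\lin_{\R}\{\bpsi(x)\bpsi(x)^\ast\}$, you supply the short direct argument via Hermitian/anti-Hermitian matrices, and you are also more explicit than the paper about the trivial direction $\text{(iii)}\Rightarrow\text{(ii)}$ and about padding a short convex combination to $N$ strictly positive weights so that~\eqref{eq:knots_restriction} really yields a $\kappa^\ast\in\cK_N$.
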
 
\begin{proof}
Due to Lemma~\ref{lem:D-optimal_1}, assumptions (i) and (ii) are equivalent. Further, if those are fulfilled, 
by Lemma~\ref{lem:D-optimal_2}, the identity matrix $\bI_n$ is the only matrix in $\Delta_{\bpsi}$ with determinant $1$ and all other matrices in $\Delta_{\bpsi}$ have determinant strictly less than $1$. The identity $\bI_n$ is thus the only maximizer of~\eqref{det_pursuit} if $N\in\N$ is chosen larger than $\dim_{\R}\lin_{\R}\{\bpsi(x)\bpsi(x)^\ast \colon\,x\in \Omega\}$. In view of~\eqref{eq:knots_restriction}, such a choice of $N$ guarantees the existence of $\kappa^\ast\in\cK_N$ with $\bA_{\kappa^\ast}=\bI_n$. The statement (iii) follows, since $\dim_{\K}\lin_{\K}\{\bvarphi(x)\bvarphi(x)^\ast \colon\,x\in \Omega\} = \dim_{\K}\lin_{\K}\{\bpsi(x)\bpsi(x)^\ast \colon\,x\in \Omega\}$ and $\dim_{\K}\lin_{\K}\{\bpsi(x)\bpsi(x)^\ast \colon\,x\in \Omega\} = \dim_{\R}\lin_{\R}\{\bpsi(x)\bpsi(x)^\ast \colon\,x\in \Omega\}$, the last equality due to~\cite[Lem.~2.6]{BKPSU24}.
\end{proof}

If a maximizer $\bA_{\kappa^\ast} = \sum_{j=1}^{N} \alpha_j \bpsi(x_j)\bpsi(x_j)^\ast$ in~\eqref{det_pursuit} has been determined, we have $\bI_n = \bA_{\kappa^\ast}$ as desired, according to Theorem~\ref{$D$-opt}, and
finally get the representation \eqref{eq:aim_Ddesign_2}
for the nodes $x_1,\ldots,x_N$ and the weights
\[
\mu_j:= \alpha_j\cdot \omega_n(x_j) \,.
\]
Note however that, while the 
maximizing matrix $\bA_{\kappa^\ast}=\bI_n$ is unique as a consequence of Lemma~\ref{lem:D-optimal_2}, the maximizing node set $\kappa^\ast$ is usually not unique.

\begin{example}\label{ex:Ddesign}
Consider $V_1:=\lin_{\R}\{\varphi_1\}$ 
for a function $\varphi_1:\N\to\{0,1\}$ which vanishes everywhere apart from one fixed $x_0\in\N$.
In this example $n=1$, $N=2$, and $\omega_1=\gamma_1=\varphi_1$. We hence have $\bpsi=\bvarphi$ and $\cC_{\bvarphi}=\cC_{\bpsi}=[0,\infty)$, $\Delta_{\bpsi}=[0,1]$. The $D$-design~\eqref{det_pursuit},
\[
\kappa^\ast = \argmax\limits_{\kappa\in\cK_2} \det(\bA_{\kappa}) = \argmax\limits_{\{(x_1,\alpha_1),(x_2,\alpha_2)\}\in\cK_2} \alpha_1\varphi_1(x_1)+ \alpha_2\varphi_1(x_2) \,,
\]
is here equivalent to finding the only point $x_0\in\N$ with $\varphi_1(x_0)=1$.
\end{example}


\section*{Acknowledgement}
The authors would like to thank Erich Novak 
for 
his informative remarks on the topic of optimality of node sets
and for pointing out~\cite{NoHi07}. 


\begin{thebibliography}{10}
	
	\bibitem{AAG93}
	S.~T. Ali, J.-P. Antoine, and J.-P. Gazeau.
	\newblock Continuous frames in {H}ilbert space.
	\newblock {\em Ann. Physics}, 222(1):1--37, 1993.
	
	\bibitem{Alt_12}
	H.~W. Alt.
	\newblock {\em Lineare Funktionalanalysis. Eine anwendungsorientierte
		Einführung.}
	\newblock Springer, Berlin, 2012.
	
	\bibitem{BKPSU24}
	F.~Bartel, L.~Kämmerer, K.~Pozharska, M.~Schäfer, and T.~Ullrich.
	\newblock Exact discretization, tight frames and recovery via {D}-optimal
	designs.
	\newblock {\em SMAI Journ. Comp. Math.}, to appear.
	
	\bibitem{BSU23}
	F.~Bartel, M.~Sch\"{a}fer, and T.~Ullrich.
	\newblock Constructive subsampling of finite frames with applications in
	optimal function recovery.
	\newblock {\em Appl. Comput. Harmon. Anal.}, 65:209--248, 2023.
	
	\bibitem{BayTeich06}
	C.~Bayer and J.~Teichmann.
	\newblock The proof of {T}chakaloff’s theorem.
	\newblock {\em Proc. Amer. Math. Soc.}, 134(10):3035--3040, 2006.
	
	\bibitem{BerSas12}
	G.~Berschneider and Z.~Sasv{\'a}ri.
	\newblock On a theorem of {K}arhunen and related moment problems and quadrature
	formulae.
	\newblock In W.~Arendt, J.~A. Ball, J.~Behrndt, K.-H. F{\"o}rster, V.~Mehrmann,
	and C.~Trunk, editors, {\em Spectral Theory, Mathematical System Theory,
		Evolution Equations, Differential and Difference Equations}, Oper. Theory
	Adv. Appl., pages 173--187, Basel, 2012. Springer Basel.
	
	\bibitem{BodmannCasazzaPaulsenSpeegle2010}
	B.~G. Bodmann, P.~G. Casazza, V.~I. Paulsen, and D.~Speegle.
	\newblock Spanning and independence properties of frame partitions.
	\newblock {\em Proc. Amer. Math. Soc.}, 139(1):219--230, 2010.
	
	\bibitem{Bo23}
	L.~Bos.
	\newblock On {F}ekete points for a real simplex.
	\newblock {\em Indag. Math. (N.S.)}, 34(2):274--293, 2023.
	\newblock Special Issue on the occasion of Jaap Korevaar’s 100-th
	anniversary.
	
	\bibitem{BoPiVi19}
	L.~Bos, F.~Piazzon, and M.~Vianello.
	\newblock Near optimal polynomial regression on norming meshes.
	\newblock In {\em 2019 13th International conference on Sampling Theory and
		Applications (SampTA)}, pages 1--4, 2019.
	
	\bibitem{BoPiVi20}
	L.~Bos, F.~Piazzon, and M.~Vianello.
	\newblock Near {G}-optimal {T}chakaloff designs.
	\newblock {\em Comput. Statist.}, 35(2):803--819, 2020.
	
	\bibitem{Bownik2017_Lyapunov}
	M.~Bownik.
	\newblock Lyapunov's theorem for continuous frames.
	\newblock {\em Proc. Amer. Math. Soc.}, 145(7):3111--3126, 2017.
	
	\bibitem{BoydVandenberghe2004}
	S.~P. Boyd and L.~Vandenberghe.
	\newblock {\em Convex Optimization}.
	\newblock Cambridge University Press, Cambridge, 2004.
	\newblock Available online at \url{https://web.stanford.edu/~boyd/cvxbook/}.
	
	\bibitem{Cahill_Chen_2013}
	J.~Cahill and X.~Chen.
	\newblock A note on scalable frames.
	\newblock In {\em Conference proceedings of SampTA}. Zenodo, 2013.
	
	\bibitem{Caratheodory11}
	C.~Carath{\'e}odory.
	\newblock { \"U}ber den {V}ariabilit{\"a}tsbereich der {F}ourier’schen
	{K}onstanten von positiven harmonischen {F}unktionen.
	\newblock {\em Rend. Circ. Mat. Palermo (1884-1940)}, 32(1):193--217, 1911.
	
	\bibitem{Casazza_Chen_2017}
	P.~G. Casazza and X.~Chen.
	\newblock Frame scalings: A condition number approach.
	\newblock {\em Linear Algebra Appl.}, 523:152--168, 2017.
	
	\bibitem{CasaCarliTran23}
	P.~G. Casazza, L.~De~Carli, and T.~T. Tran.
	\newblock Remarks on scalable frames.
	\newblock {\em Oper. Matrices}, 17(2):327--342, 2023.
	
	\bibitem{CasazzaEdidin2007}
	P.~G. Casazza and D.~Edidin.
	\newblock Equivalents of the {K}adison--{S}inger problem.
	\newblock In K.~Jarosz, editor, {\em Function Spaces}, volume 435 of {\em
		Contemp. Math.}, pages 123--142. American Mathematical Society, Providence,
	RI, 2007.
	
	\bibitem{CasaKuty_12}
	P.~G. Casazza and G.~Kutyniok.
	\newblock {\em Finite Frames: Theory and Applications}.
	\newblock Appl. Numer. Harmon. Anal. Birkhäuser, Boston, MA, 2012.
	
	\bibitem{OleChrist2016}
	O.~Christensen.
	\newblock {\em An Introduction to Frames and Riesz Bases}, volume~2 of {\em
		Appl. Numer. Harmon. Anal.}
	\newblock Springer International Publishing, Birkhäuser Cham, Cham,
	Switzerland, 2016.
	
	\bibitem{Cools1997}
	R.~Cools.
	\newblock Constructing cubature formulae: the science behind the art.
	\newblock In {\em Acta numerica, 1997}, volume~6 of {\em Acta Numer.}, pages
	1--54. Cambridge Univ. Press, Cambridge, 1997.
	
	\bibitem{CurFial2002}
	R.~E. Curto and L.~A. Fialkow.
	\newblock A duality proof of {T}chakaloff's theorem.
	\newblock {\em J. Math. Anal. Appl.}, 269(2):519--532, 2002.
	
	\bibitem{DuTeUl18}
	D.~D\~{u}ng, V.~Temlyakov, and T.~Ullrich.
	\newblock {\em Hyperbolic cross approximation}.
	\newblock Advanced Courses in Mathematics. CRM Barcelona.
	Birkh\"{a}user/Springer, Cham, 2018.
	\newblock Edited and with a foreword by Sergey Tikhonov.
	
	\bibitem{DaPrTeTi19}
	F.~Dai, A.~Prymak, V.~Temlyakov, and S.~Y. Tikhonov.
	\newblock Integral norm discretization and related problems.
	\newblock {\em Russian Math. Surveys}, 74(4):579--630, 2019.
	
	\bibitem{DaTe22}
	F.~Dai and V.~Temlyakov.
	\newblock Sampling discretization of integral norms and its application.
	\newblock {\em Proc. Steklov Inst. Math.}, 319:97--109, 2022.
	
	\bibitem{Davis1967}
	P.~J. Davis.
	\newblock A construction of nonnegative approximate quadratures.
	\newblock {\em Math. Comp.}, 21:578--587, 1967.
	
	\bibitem{DeSt97}
	H.~Dette and W.~J. Studden.
	\newblock {\em The theory of canonical moments with applications in statistics,
		probability, and analysis}.
	\newblock Wiley Series in Probability and Statistics: Applied Probability and
	Statistics. John Wiley \& Sons, Inc., New York, 1997.
	\newblock A Wiley-Interscience Publication.
	
	\bibitem{DiKuSl13}
	J.~Dick, F.~Y. Kuo, and I.~H. Sloan.
	\newblock High-dimensional integration: the quasi-{M}onte {C}arlo way.
	\newblock {\em Acta Numer.}, 22:133--288, 2013.
	
	\bibitem{DoKrUl23}
	M.~Dolbeault, D.~Krieg, and M.~Ullrich.
	\newblock A sharp upper bound for sampling numbers in {$L_2$}.
	\newblock {\em Appl. Comput. Harmon. Anal.}, 63:113--134, 2023.
	
	\bibitem{duffin1952class}
	R.~J. Duffin and A.~C. Schaeffer.
	\newblock A class of nonharmonic fourier series.
	\newblock {\em Trans. Amer. Math. Soc.}, 72(2):341--366, 1952.
	
	\bibitem{Elstrodt_2018}
	J.~Elstrodt.
	\newblock {\em Maß- und Integrationstheorie}.
	\newblock Springer Spektrum Berlin, Heidelberg, 2018.
	
	\bibitem{FiHiJaUl24}
	F.~Filbir, R.~Hielscher, T.~Jahn, and T.~Ullrich.
	\newblock Marcinkiewicz-{Z}ygmund inequalities for scattered and random data on
	the {$q$}-sphere.
	\newblock {\em Appl. Comput. Harmon. Anal.}, 71:Paper No. 101651, 18, 2024.
	
	\bibitem{FREEMAN2023126846}
	D.~Freeman and D.~Ghoreishi.
	\newblock {Discretizing $L_p$ norms and frame theory}.
	\newblock {\em J. Math. Anal. Appl.}, 519(2):126846, 2023.
	
	\bibitem{FS19}
	D.~Freeman and D.~Speegle.
	\newblock The discretization problem for continuous frames.
	\newblock {\em Adv. Math.}, 345:784--813, 2019.
	
	\bibitem{GoSuYo17}
	T.~Goda, K.~Suzuki, and T.~Yoshiki.
	\newblock Optimal order quasi--{M}onte {C}arlo integration in weighted
	{S}obolev spaces of arbitrary smoothness.
	\newblock {\em IMA J. Numer. Anal.}, 37(1):505--518, 2017.
	
	\bibitem{Gr20}
	K.~Gröchenig.
	\newblock Sampling, {Marcinkiewicz–Zygmund} inequalities, approximation, and
	quadrature rules.
	\newblock {\em Journ. Approx. Theory}, 257:105455, 2020.
	
	\bibitem{HaObLy22}
	S.~Hayakawa, H.~Oberhauser, and T.~J. Lyons.
	\newblock Positively weighted kernel quadrature via subsampling.
	\newblock In {\em Advances in Neural Information Processing Systems 35 (NeurIPS
		2022)}, pages 6886--6900, New Orleans, USA, 2023. Curran Associates.
	\newblock 36th Conference on Neural Information Processing Systems (NeurIPS
	2022), November 28–December 9, 2022.
	
	\bibitem{NoHi07}
	A.~Hinrichs and E.~Novak.
	\newblock Cubature formulas for symmetric measures in higher dimensions with
	few points.
	\newblock {\em Math. Comp.}, 76(259):1357--1372, 2007.
	
	\bibitem{Kaiser_1994}
	G.~Kaiser.
	\newblock {\em A Friendly Guide to Wavelets}.
	\newblock Modern Birkhäuser Classics. Birkhäuser, Boston, MA, 1994.
	
	\bibitem{KaUlVo21}
	L.~K\"{a}mmerer, T.~Ullrich, and T.~Volkmer.
	\newblock Worst-case recovery guarantees for least squares approximation using
	random samples.
	\newblock {\em Constr. Approx.}, 54(2):295--352, 2021.
	
	\bibitem{KaKoLimTem_22}
	B.~Kashin, E.~Kosov, I.~Limonova, and V.~Temlyakov.
	\newblock Sampling discretization and related problems.
	\newblock {\em J. Complexity}, 71:101653, 2022.
	\newblock Approximation and Geometry in High Dimensions.
	
	\bibitem{KrPoUlUl25}
	D.~Krieg, K.~Pozharska, M.~Ullrich, and T.~Ullrich.
	\newblock Sampling recovery in {$L_2$} and other norms.
	\newblock {\em Math. Comp.}, to appear.
	
	\bibitem{Kutyniok_Okoudjou_Philipp}
	G.~Kutyniok, K.~A. Okoudjou, and F.~Philipp.
	\newblock Scalable frames and convex geometry.
	\newblock In P.~G. Casazza and G.~Kutyniok, editors, {\em Finite Frames},
	volume 626 of {\em Contemp. Math.}, pages 19--32. American Mathematical
	Society, Providence, RI, 2014.
	
	\bibitem{KUTYNIOK20132225}
	G.~Kutyniok, K.~A. Okoudjou, F.~Philipp, and E.~K. Tuley.
	\newblock Scalable frames.
	\newblock {\em Linear Algebra Appl.}, 438(5):2225--2238, 2013.
	
	\bibitem{Li21}
	I.~Limonova.
	\newblock Exact discretization of the {${L}_2$}-norm with negative weight.
	\newblock {\em Math. Notes}, 110(3-4):458--462, 2021.
	
	\bibitem{Li21erratum}
	I.~Limonova.
	\newblock Erratum to: Exact discretization of the {$L_2$}-norm with negative
	weight.
	\newblock {\em Math. Notes}, 112(5):458--462, 2022.
	
	\bibitem{LimTem22}
	I.~Limonova and V.~Temlyakov.
	\newblock On sampling discretization in {$L_2$}.
	\newblock {\em J. Math. Anal. Appl.}, 515(2):126457, 2022.
	
	\bibitem{MaSpSr15}
	A.~W. Marcus, D.~A. Spielman, and N.~Srivastava.
	\newblock Interlacing families {II}: {M}ixed characteristic polynomials and the
	{K}adison-{S}inger problem.
	\newblock {\em Ann. of Math. (2)}, 182(1):327--350, 2015.
	
	\bibitem{migliorati_stable_2022}
	G.~Migliorati and F.~Nobile.
	\newblock Stable high-order randomized cubature formulae in arbitrary
	dimension.
	\newblock {\em J. Approx. Theory}, 275:105706, 2022.
	
	\bibitem{MoUl21}
	M.~Moeller and T.~Ullrich.
	\newblock {$L_2$}-norm sampling discretization and recovery of functions from
	{RKHS} with finite trace.
	\newblock {\em Sampl. Theory Signal Process. Data Anal.}, 19(2):Paper No. 13,
	31, 2021.
	
	\bibitem{MYSOVSKIKH1975221}
	I.~Mysovskikh.
	\newblock On {C}hakalov's theorem.
	\newblock {\em Comput. Math. Math. Phys.}, 15(6):221--227, 1975.
	
	\bibitem{NaSchUl22}
	N.~Nagel, M.~Sch\"{a}fer, and T.~Ullrich.
	\newblock A new upper bound for sampling numbers.
	\newblock {\em Found. Comput. Math.}, 22(2):445--468, 2022.
	
	\bibitem{NiOlUl16}
	S.~Nitzan, A.~Olevskii, and A.~Ulanovskii.
	\newblock Exponential frames on unbounded sets.
	\newblock {\em Proc. Amer. Math. Soc.}, 144(1):109--118, 2016.
	
	\bibitem{Nov86}
	E.~Novak.
	\newblock Quadrature and widths.
	\newblock {\em J. Approx. Theory}, 47(3):195--202, 1986.
	
	\bibitem{NoWoII}
	E.~Novak and H.~Wo\'{z}niakowski.
	\newblock {\em Tractability of multivariate problems. {V}olume {II}: {S}tandard
		information for functionals}, volume~12 of {\em EMS Tracts in Mathematics}.
	\newblock European Mathematical Society (EMS), Z\"{u}rich, 2010.
	
	\bibitem{PiSoVi17}
	F.~Piazzon, A.~Sommariva, and M.~Vianello.
	\newblock Caratheodory-{T}chakaloff subsampling.
	\newblock {\em Dolomites Res. Notes Approx.}, 10(1):5--14, 2017.
	
	\bibitem{Pu97}
	M.~Putinar.
	\newblock A note on {T}chakaloff's theorem.
	\newblock {\em Proc. Amer. Math. Soc.}, 125(8):2409--2414, 1997.
	
	\bibitem{Rockafellar1997}
	R.~T. Rockafellar.
	\newblock {\em Convex Analysis}.
	\newblock Princeton Landmarks in Mathematics and Physics. Princeton University
	Press, Princeton, NJ, 1997.
	\newblock Originally published in 1970 as part of the Princeton Mathematical
	Series, Vol.~28.
	
	\bibitem{Rudin_1966}
	W.~Rudin.
	\newblock {\em Real and Complex Analysis}.
	\newblock McGraw-Hill Series in Higher Mathematics. McGraw-Hill, New York, NY,
	1966.
	
	\bibitem{Saks_1937}
	S.~Saks.
	\newblock {\em Theory of the Integral}, volume~7 of {\em Monografie
		Matematyczne}.
	\newblock G.E.\ Stechert \& Co., Warszawa–Lwów, Poland, 1937.
	
	\bibitem{Tschak57}
	V.~Tchakaloff.
	\newblock Formules de cubatures m\'{e}caniques \`a coefficients non
	n\'{e}gatifs.
	\newblock {\em Bull. Sci. Math., II. Sér}, 81:123--134, 1957.
	
	\bibitem{Tem2018MZ}
	V.~Temlyakov.
	\newblock The {M}arcinkiewicz-type discretization theorems.
	\newblock {\em Constr. Approx.}, 48(2):337--369, 2018.
	\newblock arXiv:1703.03743.
	
	\bibitem{Tem2021HyperbolicCross}
	V.~Temlyakov.
	\newblock Sampling discretization of integral norms of the hyperbolic cross
	polynomials.
	\newblock {\em Proc. Steklov Inst. Math.}, 312(1):270--281, 2021.
	\newblock arXiv:2005.05967.
	
	\bibitem{TeUl21}
	V.~Temlyakov and T.~Ullrich.
	\newblock Bounds on {K}olmogorov widths and sampling recovery for classes with
	small mixed smoothness.
	\newblock {\em J. Complexity}, 67:Paper No. 101575, 19, 2021.
	
	\bibitem{Wil1968}
	M.~W. Wilson.
	\newblock A general algorithm for nonnegative quadrature formulas.
	\newblock {\em Math. Comp.}, 23:253--258, 1969.
	
	\bibitem{Wil1969}
	M.~W. Wilson.
	\newblock Uniform approximation of nonnegative continuous functionals.
	\newblock {\em J. Approx. Theory}, 2:241--248, 1969.
	
	\bibitem{Zyg59}
	A.~Zygmund.
	\newblock {\em Trigonometric Series}.
	\newblock Cambridge University Press, London, New York, 2 edition, 1959.
	
\end{thebibliography}

\end{document}